\newtheorem{theorem}{Theorem}[section]
\newtheorem{prop}[theorem]{Proposition}
\newtheorem{lem}[theorem]{Lemma}
\theoremstyle{definition}
\newtheorem{defn}[theorem]{Definition}
\theoremstyle{remark}
\newtheorem*{rem}{Remark}
\numberwithin{equation}{section}
\begin{document}

\title[Quantum double and the Weyl algebra]
{Twisting of the Quantum double and the Weyl algebra}

\author{Byung-Jay Kahng}
\date{}
\address{Department of Mathematics and Statistics\\ Canisius College\\
Buffalo, NY 14208}
\email{kahngb@canisius.edu}

\begin{abstract}
Quantum double construction, originally due to Drinfeld and has been since generalized 
even to the operator algebra framework, is naturally associated with a certain 
(quasitriangular) $R$-matrix ${\mathcal R}$.  It turns out that ${\mathcal R}$ 
determines a twisting of the comultiplication on the quantum double.  It then suggests 
a twisting of the algebra structure on the dual of the quantum double.  For $D(G)$, 
the $C^*$-algebraic quantum double of an ordinary group $G$, the ``twisted $\widehat{D(G)}$'' 
turns out to be the Weyl algebra $C_0(G)\times_{\tau}G$, which is in turn isomorphic to 
${\mathcal K}(L^2(G))$.  This is the $C^*$-algebraic counterpart to an earlier 
(finite-dimensional) result by Lu.  It is not so easy technically to extend this program 
to the general locally compact quantum group case, but we propose here some possible 
approaches, using the notion of the (generalized) Fourier transform.
\end{abstract}
\maketitle

\section{Introduction}

There are a few different approaches to formulate the notion of quantum groups, 
which are generalizations of ordinary groups.  In the finite-dimensional case, they
usually come down to Hopf algebras \cite{Ab}, \cite{Mo}, although there actually 
exist examples of quantum groups that cannot be described only by Hopf algebra 
languages.  More generally, the approaches to quantum groups include the (purely 
algebraic) setting of ``quantized universal enveloping (QUE) algebras'' \cite{Dr}, 
\cite{CP}; the setting of multiplier Hopf algebras and algebraic quantum groups 
\cite{VDmult}, \cite{KuVD}; and the ($C^*$- or von Neumann algebraic) setting of 
locally compact quantum groups \cite{KuVa}, \cite{KuVavN}, \cite{MNW}, \cite{VDvN}. 
In this paper, we are mostly concerned with the setting of $C^*$-algebraic locally 
compact quantum groups.

In all these approaches to quantum groups, one important aspect is that the category
of quantum groups is a ``self-dual'' category, which is not the case for the (smaller)
category of ordinary groups.  To be more specific, a typical quantum group $A$ is
associated with a certain dual object $\hat{A}$, which is also a quantum group, and
the dual object, $\hat{\hat{A}}$, of the dual quantum group is actually isomorphic
to $A$.  This result, $\hat{\hat{A}}\cong A$, is a generalization of the Pontryagin
duality, which holds in the smaller category of abelian locally compact groups.

For a finite dimensional Hopf algebra $H$, its dual object is none other than the 
dual vector space $H'$, with its Hopf algebra structure obtained naturally from 
that of $H$.  In general, however, a typical quantum group $A$ would be infinite 
dimensional, and in that case, the dual vector space is too big to be given any 
reasonable structure (For instance, one of the many drawbacks is that $(A\otimes A)'$ 
is strictly larger than $A'\otimes A'$.).  

In each of the approaches to quantum groups, therefore, a careful attention should 
be given to making sense of what the dual object is for a quantum group, as well as 
to exploring the relationship between them.  This is especially true for the 
analytical settings, where the quantum groups are required to have additional, 
topological structure.  The success of the locally compact quantum group framework 
by Kustermans and Vaes \cite{KuVa}, and also by Masuda, Nakagami, and Woronowicz 
\cite{MNW} is that they achieve the definition of locally compact quantum groups 
so that it has the self-dual property.

Meanwhile, given a Hopf algebra $H$ and its dual $\hat{H}$, there exists the notion 
of the ``quantum double'' $H_D=\hat{H}^{\operatorname{op}}\Join H$ (see \cite{Dr}, 
\cite{Mo}).  This notion can be generalized even to the setting of locally compact 
quantum groups: From a von Neumann algebraic quantum group $(N,\Delta)$, one can 
construct the quantum double $(N_D,\Delta_D)$.  See Section~2 below.

The quantum double is associated with a certain ``quantum universal $R$-matrix'' 
type operator ${\mathcal R}\in N_D\otimes N_D$.  It turns out that ${\mathcal R}$ 
determines a left cocycle for $\Delta_D$, and allows us to {\em twist (or deform)\/} 
the comultiplication on $N_D$, or its $C^*$-algebraic counterpart $A_D$.  The result, 
$(A_D,{\mathcal R}\Delta_D)$, can no longer become a locally compact quantum group, 
but it suggests a twisting of the algebra structure at the level of $\widehat{A_D}$, 
the dual of the quantum double.  Our intention here is to explore this algebra, the 
``deformed $\widehat{A_D}$''.

There are two crucial obstacles in carrying out this program.  For one thing, 
the $C^*$-algebra $\widehat{A_D}$ itself can be rather complicated in general. 
In addition, unlike in the algebraic approaches, even the simple tool like the 
dual pairing is not quite easy to work with.  In the locally compact quantum 
group framework, the dual pairing between a quantum group $A$ and its dual 
$\hat{A}$ is defined at dense subalgebra level, by using the multiplicative 
unitary operator associated with $A$ and $\hat{A}$.  While it is a correct 
definition (in the sense that it is a natural generalization of the obvious 
dual pairing between $H$ and $H'$ in the finite-dimensional case), the way 
it is defined makes it rather difficult to work with.  For instance, there is 
no straightforward way of obtaining a dual object of a $C^*$-bialgebra.

These technical difficulties cannot be totally overcome, but we can improve the 
situation by having a better understanding of the duality picture.  Recently 
in \cite{BJKqfourier}, motivated by Van Daele's work in the multiplier Hopf 
algebra framework \cite{VDFourier}, the author defined the (generalized) Fourier 
transform between a locally compact quantum group and its dual.  In addition, 
an alternative description of the dual pairing was found (see Section~4 of 
\cite{BJKqfourier}), in terms of the Haar weights and the Fourier transform.
This alternative perspective to the dual pairing is useful in our paper. 

In the case of an ordinary locally compact group $G$, so for $A=C_{red}^*(G)$ 
(the ``reduced group $C^*$-algebra'') and $\hat{A}=C_0(G)$, the quantum double 
turns out to be $A_D=C_0(G)\rtimes_{\alpha}G$, the crossed product $C^*$-algebra 
given by the group $G$ acting on itself by conjugation $\alpha$.  It is also 
known that $\widehat{A_D}=C_{red}^*(G)\otimes C_0(G)$.  After carrying out 
the twisting process of $\widehat{A_D}$ as described above, we will see in 
Section~5 below that it gives rise to the crossed product $C^*$-algebra $B=C_0(G)
\rtimes_{\tau}G$, where $\tau$ is the translation.  This algebra is often called 
the ``Weyl algebra''.  It is quite interesting to observe this relationship 
between the quantum double (a quantum group) and the Weyl algebra (no longer 
a quantum group), which are both well-known to appear in some physics applications.

Meanwhile, it is known that as a $C^*$-algebra, the Weyl algebra is isomorphic 
to the algebra of compact operators: $C_0(G)\rtimes_{\tau}G\cong{\mathcal K}
\bigl(L^2(G)\bigr)$.  In the (finite-dimensional) Hopf algebra setting, a similar 
process was carried out by Lu \cite{Ludouble}, \cite{Mo}: Lu's result says that 
the twisting of the dual of the quantum double turns out to be isomorphic to 
the smash product $H\#\hat{H}$, which is in turn known to be isomorphic to 
$\operatorname{End}(H)$.  In this sense, our observation here will be the 
$C^*$-algebraic counterpart to Lu's result.  See also, \cite{DeVD}, where the 
result is obtained in the setting of multiplier Hopf algebras.

Motivated by the results in these ``good'' cases, we then try to consider the case 
of general locally compact quantum groups.  While there are technical obstacles, 
we propose in Section~6 a workable approach based on the property of the Fourier 
transform.  For a general (not necessarily regular) locally compact quantum group 
$A$, the $C^*$-algebra of ``deformed $\widehat{A_D}$'' may no longer be isomorphic 
to ${\mathcal K}({\mathcal H})$ and can be quite complicated: It may not even be 
of type I.

Here is how the paper is prepared: In Section~2, we give basic definitions and 
review some results about locally compact quantum groups and its dual.  We will 
also describe the dual pairing map, including an alternative characterization 
obtained recently by the author.

In Section~3, we will discuss the quantum double construction.  This is a special 
case of the ``double crossed product'' construction developed by Baaj and Vaes 
in \cite{BjVa}.  However, the scope of that paper is a little too general, 
and we needed to have an explicit summary written out on the quantum double 
construction for a general locally compact quantum group.  Some of the results here, 
while straightforward, were just barely noted in \cite{BjVa} and have not appeared 
elsewhere: Among such results is the discussion on the ``quantum $R$-matrix'' type 
operator.  In Section~4, we will see how the $R$-matrix ${\mathcal R}$ determines 
a left twisting of the comultiplication on the quantum double.  It will suggest 
a twisting (deformation) at the dual level.

In Section~5, we consider the case of an ordinary group and its quantum double 
$D(G)$, then carry out the twisting of $\widehat{D(G)}$.  As noted above, the 
result is shown to be isomorphic to the Weyl algebra.  In Section~6, we consider 
the general case.  Using the case of $D(G)$ and $\widehat{D(G)}$ as a basis, we 
will collect some information that can be used in our efforts to go further into 
the case of general locally compact quantum groups.  We will propose here a reasonable 
description for the deformed $\widehat{A_D}$.  The notion of the generalized Fourier 
transform defined in \cite{BJKqfourier} will play a central role.

\section{Preliminaries}

\subsection{Locally compact quantum groups}
Let us first begin with the definition of a {\em von Neumann algebraic locally compact 
quantum group\/}, given by Kustermans and Vaes \cite{KuVavN}.  This definition is 
known to be equivalent to the definition in the $C^*$-algebra setting \cite{KuVa}, 
and also to the formulation given by Masuda--Nakagami--Woronowicz \cite{MNW}.  
Refer also to the recent paper by Van Daele \cite{VDvN}.  We note that the existence 
of the Haar (invariant) weights has to be assumed as a part of the definition.

\begin{defn}
Let $M$ be a von Neumann algebra, together with a unital normal 
${}^*$-homomorphism $\Delta:M\to M\otimes M$ satisfying the ``coassociativity'' 
condition: $(\Delta\otimes\operatorname{id})\Delta=(\operatorname{id}
\otimes\Delta)\Delta$.  Assume further the existence of a left invariant 
weight and a right invariant weight, as follows:
\begin{itemize}
\item $\varphi$ is an n.s.f. weight on $M$ that is left invariant:
$$
\varphi\bigl((\omega\otimes\operatorname{id})(\Delta x)\bigr)=\omega(1)\varphi(x),
\quad {\text {for all $x\in{\mathfrak M}_{\varphi}^+$ and $\omega\in M^+_*$.}}
$$
\item $\psi$ is an n.s.f. weight on $M$ that is right invariant:
$$
\psi\bigl((\operatorname{id}\otimes\omega)(\Delta x)\bigr)=\omega(1)\psi(x),
\quad {\text {for all $x\in{\mathfrak M}_{\psi}^+$ and $\omega\in M^+_*$.}}
$$
\end{itemize}
Then we say that $(M,\Delta)$ is a {\em von Neumann algebraic quantum group\/}.
\end{defn}

\begin{rem}
We are using the standard notations and terminologies from the theory of weights.
For instance, an ``n.s.f. weight'' is a normal, semi-finite, faithful weight.
For an n.s.f. weight $\varphi$, we write $x\in{\mathfrak M}_{\varphi}^+$ to mean 
$x\in M^+$ so that $\varphi(x)<\infty$, while $x\in{\mathfrak N}_{\varphi}$ means 
$x\in M$ so that $\varphi(x^*x)<\infty$.  See \cite{Tk2}.  Meanwhile, it can be 
shown that the Haar weights $\varphi$ and $\psi$ above are unique, up to scalar 
multiplication.
\end{rem}

Let us fix $\varphi$.  Then by means of the GNS construction $({\mathcal H},\iota,
\Lambda)$ for $\varphi$, we may as well regard $M$ as a subalgebra of the operator 
algebra ${\mathcal B}({\mathcal H})$, such as $M=\iota(M)\subseteq{\mathcal B}
({\mathcal H})$.  Thus we will have: $\bigl\langle\Lambda(x),\Lambda(y)\bigr\rangle
=\varphi(y^*x)$ for $x,y\in{\mathfrak N}_{\varphi}$, and $a\Lambda(y)=\iota(a)
\Lambda(y)=\Lambda(ay)$ for $y\in{\mathfrak N}_{\varphi}$, $a\in M$.  Consider next 
the operator $T$, which is the closure of the map $\Lambda(x)\mapsto\Lambda(x^*)$ for 
$x\in{\mathfrak N}_{\varphi}\cap{\mathfrak N}_{\varphi}^*$.  Expressing its polar 
decomposition as $T=J\nabla^{1/2}$, we obtain in this way the ``modular operator'' 
$\nabla$ and the ``modular conjugation'' $J$.  The operator $\nabla$ determines 
the modular automorphism group.  Refer to the standard weight theory \cite{Tk2}.

Meanwhile, there exists a unitary operator $W\in{\mathcal B}({\mathcal H}\otimes
{\mathcal H})$, called the {\em multiplicative unitary operator\/} for $(M,\Delta)$. 
It is defined by $W^*\bigl(\Lambda(x)\otimes\Lambda(y)\bigr)=(\Lambda\otimes\Lambda)
\bigl((\Delta y)(x\otimes1)\bigr)$, for $x,y\in{\mathfrak N}_{\varphi}$.  It satisfies 
the pentagon equation of Baaj and Skandalis \cite{BS}: $W_{12}W_{13}W_{23}
=W_{23}W_{12}$.  We also have: $\Delta a=W^*(1\otimes a)W$, for $a\in M$.  The 
operator $W$ is the ``left regular representation'', and it provides the following 
useful characterization of $M$:
$$
M=\overline{\{(\operatorname{id}\otimes\omega)(W):\omega\in{\mathcal B}
({\mathcal H})_*\}}^w\,\bigl(\subseteq{\mathcal B}({\mathcal H})\bigr),
$$
where $-^w$ denotes the von Neumann algebra closure (for instance, the closure 
under $\sigma$-weak topology).

If we wish to consider the quantum group in the $C^*$-algebra setting, we just 
need to take the norm completion instead, and restrict $\Delta$ to $A$.  See 
\cite{KuVa}, \cite{VDvN}.  Namely,
$$
A=\overline{\{(\operatorname{id}\otimes\omega)(W):\omega\in{\mathcal B}
({\mathcal H})_*\}}^{\|\ \|}\,\bigl(\subseteq{\mathcal B}({\mathcal H})\bigr).
$$

Constructing the antipode is rather technical (it uses the right Haar weight), 
and we refer the reader to the main papers \cite{KuVa}, \cite{KuVavN}.  See 
also an improved treatment given in \cite{VDvN}, where the antipode is defined
in a more natural way by means of Tomita--Takesaki theory.  For our purposes, 
we will just mention the following useful characterization of the antipode $S$:
\begin{equation}\label{(antipode)}
S\bigl((\operatorname{id}\otimes\omega)(W)\bigr)
=(\operatorname{id}\otimes\omega)(W^*).
\end{equation}
In fact, the subspace consisting of the elements $(\operatorname{id}\otimes\omega)
(W)$, for $\omega\in{\mathcal B}({\mathcal H})_*$, is dense in $M$ and forms 
a core for $S$.  Meanwhile, there exist a unique ${}^*$-antiautomorphism 
$R$ (called the ``unitary antipode'') and a unique continuous one parameter 
group $\tau$ on $M$ (called the ``scaling group'') such that we have: $S=
R\tau_{-\frac{i}{2}}$.  Since $(R\otimes R)\Delta=\Delta^{\operatorname{cop}}R$, 
where $\Delta^{\operatorname{cop}}$ is the co-opposite comultiplication 
(i.\,e. $\Delta^{\operatorname{cop}}=\chi\circ\Delta$, for $\chi$ the flip map on 
$M\otimes M$), the weight $\varphi\circ R$ is right invariant.  So we can, without 
loss of generality, choose $\psi$ to equal $\varphi\circ R$.  The GNS map for 
$\psi$ will be written as $\Gamma$.

From the right Haar weight $\psi$, we can find another multiplicative 
unitary $V$, defined by $V\bigl(\Gamma(x)\otimes\Gamma(y)\bigr)=(\Gamma
\otimes\Gamma)\bigl(\Delta x)(1\otimes y)\bigr)$, for $x,y\in{\mathfrak N}_{\psi}$. 
It is the ``right regular representation'', and it provides an alternative 
characterization of $M$:  That is, $M=\overline{\{(\omega\otimes\operatorname{id})
(V):\omega\in{\mathcal B}({\mathcal H})_*\}}^w\,\bigl(\subseteq{\mathcal B}
({\mathcal H})\bigr)$.

Next, let us consider the {\em dual quantum group\/}.  Working with the other leg 
of the multiplicative unitary operator $W$, we define:
$$
\hat{M}=\overline{\bigl\{(\omega\otimes\operatorname{id})(W):\omega
\in{\mathcal B}({\mathcal H})_*\bigr\}}^w\,\bigl(\subseteq{\mathcal B}
({\mathcal H})\bigr).
$$
This is indeed shown to be a von Neumann algebra.  We can define a comultiplication 
on it, by $\hat{\Delta}(y)=\Sigma W(y\otimes1)W^*\Sigma$, for all $y\in\hat{M}$. 
Here, $\Sigma$ is the flip map on ${\mathcal H}\otimes{\mathcal H}$, and defining 
the dual comultiplication in this way makes it ``flipped'', unlike in the purely 
algebraic settings (See the remark following Proposition~\ref{pairing} for more 
discussion.).  This is done for technical reasons, so that it is simpler to work 
with the multiplicative unitary operator.  

The general theory assures that $(\hat{M},\hat{\Delta})$ is again a von Neumann 
algebraic quantum group, together with appropriate Haar weights $\hat{\varphi}$ 
and $\hat{\psi}$.  By taking the norm completion, we can consider the $C^*$-algebraic 
quantum group $(\hat{A},\hat{\Delta})$.  The operator $\hat{W}=\Sigma W^*\Sigma$ 
is easily seen to be the multiplicative unitary for $(\hat{M},\hat{\Delta})$. 
It turns out that $W\in M\otimes\hat{M}$ and $\hat{W}\in\hat{M}\otimes M$.

The left Haar weight $\hat{\varphi}$ on $(\hat{M},\hat{\Delta})$ is characterized 
by the GNS map $\hat{\Lambda}:{\mathfrak N}_{\hat{\varphi}}\to{\mathcal H}$, which 
is given by the following (See Proposition~8.14 of \cite{KuVa}):
\begin{equation}\label{(dualHaar)}
\bigl\langle\hat{\Lambda}\bigl((\omega\otimes\operatorname{id})(W),\Lambda(x)
\bigr\rangle=\omega(x^*).
\end{equation}
For this formula to make sense, we need $\omega\in{\mathcal B}({\mathcal H})_*$ 
to have $L\ge0$ such that $|\omega(x^*)|\le L\bigl\|\Lambda(x)\bigr\|$ for all 
$x\in{\mathfrak N}_{\varphi}$.  It is known that for such linear forms $\omega$, 
the elements $(\omega\otimes\operatorname{id})(W)$ form a core for $\hat{\Lambda}$.
See \cite{KuVa}, \cite{KuVavN}. 

The other structure maps for $(\hat{M},\hat{\Delta})$ are defined as before, including 
the modular operator $\hat{\nabla}$, the modular conjugation $\hat{J}$, and the 
antipode $\hat{S}$.  As for the antipode map $\hat{S}$, a similar characterization 
as in equation~\eqref{(antipode)} exists, with $\hat{W}=\Sigma W^*\Sigma$ now being 
the multiplicative unitary.  Namely, $\hat{S}\bigl((\omega\otimes\operatorname{id})(W^*)
\bigr)=(\omega\otimes\operatorname{id})(W)$.  The unitary antipode and the scaling 
group can be also found, giving us the polar decomposition $\hat{S}=\hat{R}
\hat{\tau}_{\frac{i}{2}}$. 

The modular conjugations $J$ and $\hat{J}$ are closely related with the antipode maps. 
In fact, it is known that $R(x)=\hat{J}x^*\hat{J}$, for $x\in M$ and $\hat{R}(y)
=Jy^*J$, for $y\in\hat{M}$.  It is also known that $\hat{J}J=\nu^{i/4}J\hat{J}$ 
(where $\nu$ is the ``scaling constant''), and that $W^*=(\hat{J}\otimes J)W(\hat{J}
\otimes J)$, and $V=(\hat{J}\otimes\hat{J})\Sigma W^*\Sigma(\hat{J}\otimes\hat{J})$. 
We have: $V\in\hat{M}'\otimes M$, where $\hat{M}'$ is the commutant of $\hat{M}$, 
with the opposite product.  See \cite{KuVavN} and \cite{VDoamp}, for further results 
on the relationships between various operators.

Repeating the whole process again, we can also construct the dual $(\hat{\hat{M}},
\hat{\hat{\Delta}})$ of $(\hat{M},\hat{\Delta})$.  An important result is the 
{\em generalized Pontryagin duality\/}, which says that $(\hat{\hat{M}},
\hat{\hat{\Delta}})\cong(M,\Delta)$.

We wrap up the subsection here.  For further details, we refer the reader to 
the fundamental papers on the subject: \cite{BS}, \cite{Wr7}, \cite{KuVa}, 
\cite{KuVavN}, \cite{MNW}, \cite{VDvN}.

\subsection{The dual pairing}
Suppose we have a mutually dual pair of quantum groups $(M,\Delta)$ and $(\hat{M},
\hat{\Delta})$.  Let $W$ be the associated multiplicative unitary operator.  The
dual pairing exists between $M$ and $\hat{M}$, but unlike in the (purely algebraic) 
cases of finite-dimensional Hopf algebras or multiplier Hopf algebras, the pairing 
map is defined only at the level of certain dense subalgebras of $M$ and $\hat{M}$. 
To be more specific, consider the subsets ${\mathcal A}$ ($\subseteq M$) and 
$\hat{\mathcal A}$ ($\subseteq\hat{M}$), defined by
$$
{\mathcal A}=\bigl\{(\operatorname{id}\otimes\omega)(W):
\omega\in M_*\bigr\}
$$
and
$$
\hat{\mathcal A}=\bigl\{(\omega'\otimes\operatorname{id})(W):
\omega'\in\hat{M}_*\bigr\}.
$$
By the general theory, it is known (see \cite{BS}, \cite{KuVavN}) that the spaces
${\mathcal A}$ and $\hat{\mathcal A}$ are actually (dense) subalgebras of $M$ and 
$\hat{M}$.  The dual pairing exists between $\hat{\mathcal A}$ and ${\mathcal A}$: 
That is, for $b=(\omega\otimes\operatorname{id})(W)\in\hat{\mathcal A}$ and 
$a=(\operatorname{id}\otimes\theta)(W)\in{\mathcal A}$, we have:
\begin{equation}\label{(pairing)}
\langle b\,|\,a\rangle=\bigl\langle(\omega\otimes\operatorname{id})(W)
\,|\,(\operatorname{id}\otimes\theta)(W)\bigr\rangle:=(\omega\otimes\theta)(W)
=\omega(a)=\theta\bigl(b).
\end{equation}
This definition is suggested by \cite{BS}.  The properties of this pairing map is 
given below:

\begin{prop}\label{pairing}
Let $(M,\Delta)$ and $(\hat{M},\hat{\Delta})$ be the dual pair of locally compact 
quantum groups, and let ${\mathcal A}$ and $\hat{\mathcal A}$ be their dense 
subalgebras, as defined above.  Then the map $\langle\ \,|\,\ \rangle:\hat{\mathcal A}
\times{\mathcal A}\to\mathbb{C}$, given by equation \eqref{(pairing)}, is a valid 
dual pairing.  Moreover, we have:
\begin{enumerate}
\item $\langle b_1b_2\,|\,a\rangle=\bigl\langle b_1\otimes b_2\,|\,\Delta(a)\bigr\rangle$,
for $a\in{\mathcal A}$, $b_1,b_2\in\hat{\mathcal A}$.
\item $\langle b\,|\,a_1a_2\rangle=\langle\hat{\Delta}^{\operatorname{cop}}(b)
\,|\,a_1\otimes a_2\rangle$, for $a_1,a_2\in{\mathcal A}$, $b\in\hat{\mathcal A}$.
\item $\bigl\langle b\,|\,S(a)\bigr\rangle=\bigl\langle\hat{S}^{-1}(b)\,|\,a\bigr\rangle$, 
for $a\in{\mathcal A}$, $b\in\hat{\mathcal A}$.
\end{enumerate}
\end{prop}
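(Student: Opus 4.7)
\emph{Well-definedness.} The three expressions in \eqref{(pairing)} coincide by the Fubini rule for normal slice maps applied to the bounded operator $W\in M\otimes\hat{M}$: the scalar $(\omega\otimes\theta)(W)$ may be computed by slicing in either leg first. Since $(\omega\otimes\theta)(W)=\omega(a)$ depends only on $\omega$ and $a$, while $(\omega\otimes\theta)(W)=\theta(b)$ depends only on $\theta$ and $b$, the common value depends only on the pair $(b,a)$, and the pairing is well-defined.

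\emph{Items (1) and (2).} These follow from the pentagon equation $W_{12}W_{13}W_{23}=W_{23}W_{12}$. Rewriting it as $W_{12}^{*}W_{23}W_{12}=W_{13}W_{23}$ and as $W_{23}W_{12}W_{23}^{*}=W_{12}W_{13}$, and combining with $\Delta(\cdot)=W^{*}(1\otimes\cdot)W$ on $M$ and $\hat{\Delta}^{\operatorname{cop}}(\cdot)=W(\cdot\otimes 1)W^{*}$ on $\hat{M}$, I obtain the two standard identities
\[
(\Delta\otimes\operatorname{id})(W)=W_{13}W_{23},\qquad (\operatorname{id}\otimes\hat{\Delta}^{\operatorname{cop}})(W)=W_{12}W_{13}.
\]
Writing $b_i=(\omega_i\otimes\operatorname{id})(W)$, the first identity gives $b_1b_2=((\omega_1\otimes\omega_2)\circ\Delta\otimes\operatorname{id})(W)$; slicing against $\theta$ then yields $\langle b_1b_2\,|\,a\rangle=(\omega_1\otimes\omega_2)(\Delta a)=\langle b_1\otimes b_2\,|\,\Delta a\rangle$, which is (1). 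Item (2) is the mirror argument: with $a_i=(\operatorname{id}\otimes\theta_i)(W)$ and $\zeta=(\theta_1\otimes\theta_2)\circ\hat{\Delta}^{\operatorname{cop}}$, the second identity gives $a_1a_2=(\operatorname{id}\otimes\zeta)(W)$, and slicing against $\omega$ closes the argument.

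\emph{Item (3).} By \eqref{(antipode)}, $S(a)=(\operatorname{id}\otimes\theta)(W^{*})$; the analogous characterization of $\hat{S}$ recorded in the excerpt gives $\hat{S}^{-1}(b)=(\omega\otimes\operatorname{id})(W^{*})$. Applying $(\omega\otimes\theta)$ to $W^{*}$ in either order produces $\omega(S(a))=\theta(\hat{S}^{-1}(b))=(\omega\otimes\theta)(W^{*})$; reading the two ends through the alternative expressions in \eqref{(pairing)} yields $\langle b\,|\,S(a)\rangle=\langle\hat{S}^{-1}(b)\,|\,a\rangle$.

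\emph{Main obstacle.} The delicate point is in (3): $S$ is unbounded and does not in general preserve $\mathcal{A}$, so $\langle b\,|\,S(a)\rangle$ does not literally fit the slicing template of \eqref{(pairing)}. I would address this by interpreting the pairing through its first or third expression (i.e.\ $\omega$ evaluated on an element of $M$, or $\theta$ evaluated on an element of $\hat{M}$) rather than via the double slice of $W$, and then invoking \eqref{(antipode)} to realize $S(a)$ directly as a slice of $W^{*}$. With that convention fixed, the verification of (3) collapses to Fubini for $W^{*}$, exactly parallel to the well-definedness step.
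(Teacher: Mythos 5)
Your proposal is correct and is exactly the ``straightforward'' argument the paper has in mind: the paper itself supplies no proof of this proposition (the remark following it declares the three properties straightforward and defers to Proposition~4.2 of \cite{BJKqfourier}), and your derivation of (1) and (2) from the pentagon identities $(\Delta\otimes\operatorname{id})(W)=W_{13}W_{23}$ and $(\operatorname{id}\otimes\hat{\Delta}^{\operatorname{cop}})(W)=W_{12}W_{13}$, together with the slice-functional reading of \eqref{(pairing)}, is the standard route and correctly accounts for the appearance of $\hat{\Delta}^{\operatorname{cop}}$ in (2). Your observation about (3) --- that $S(a)$ need not lie in ${\mathcal A}$, so the pairing must be read through $\omega\bigl(S(a)\bigr)$ and $\theta\bigl(\hat{S}^{-1}(b)\bigr)$ rather than through the double slice of $W$ --- is the right resolution of the only delicate point.
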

\begin{rem}
Bilinearity of $\langle\ \,|\,\ \rangle$ is obvious, and the proof of the three properties 
is straightforward.  See, for instance, Proposition~4.2 of \cite{BJKqfourier}.  Except for 
the appearance of the co-opposite comultiplication $\hat{\Delta}^{\operatorname{cop}}$ 
in (2), the proposition shows that $\langle\ \,|\,\ \rangle$ is a suitable dual pairing 
map that generalizes the pairing map on (finite-dimensional) Hopf algebras.  The difference 
is that in purely algebraic frameworks (Hopf algebras, QUE algebras, or even multiplier 
Hopf algebras), the dual comultiplication on $H'$ is simply defined by dualizing the product 
on $H$ via the natural pairing map between $H$ and $H'$.  Whereas in our case, the pairing 
is best defined using the multiplicative unitary operator.  It turns out that defining 
as we have done the dual comultiplication as ``flipped'' makes things to become technically 
simpler, even with (2) causing minor annoyance.
\end{rem}

Meanwhile, let us quote below an alternative description given in \cite{BJKqfourier} 
of this pairing map, using the Haar weights and the generalized Fourier transform. 
The new descriptions are only valid on certain subspaces $D\subseteq{\mathcal A}$ and 
$\hat{D}\subseteq\hat{\mathcal A}$, but $D$ and $\hat{D}$ are dense subalgebras in $M$ 
and $\hat{M}$ respectively, and form cores for the antipode maps $S$ and $\hat{S}$.

\begin{theorem}\label{pairingthm}
Let $D\subseteq{\mathcal A}$ and $\hat{D}\subseteq\hat{\mathcal A}$ be the 
dense subalgebras as defined in Section~4 of \cite{BJKqfourier}.  Then:
\begin{enumerate}
  \item For $a\in D$, its Fourier transform is defined by
$$
{\mathcal F}(a):=(\varphi\otimes\operatorname{id})\bigl(W(a\otimes1)\bigr).
$$
  \item For $b\in\hat{D}$, the inverse Fourier transform is defined by
$$
{\mathcal F}^{-1}(b):=(\operatorname{id}\otimes\hat{\varphi})
\bigl(W^*(1\otimes b)\bigr).
$$
  \item The dual pairing map $\langle\ \,|\,\ \rangle:\hat{\mathcal A}
\times{\mathcal A}\to\mathbb{C}$ given in Proposition~\ref{pairing} takes 
the following form, if we restrict it to the level of $D$ and $\hat{D}$:
\begin{align}
\langle b\,|\,a\rangle&
=\bigl\langle\hat{\Lambda}(b),\Lambda(a^*)\bigr\rangle
=\varphi\bigl(a{\mathcal F}^{-1}(b)\bigr)
=\hat{\varphi}\bigl({\mathcal F}(a^*)^*b\bigr)  \notag \\
&=(\varphi\otimes\hat{\varphi})\bigl[(a\otimes1)W^*(1\otimes b)\bigr].
\notag
\end{align}
\end{enumerate}
\end{theorem}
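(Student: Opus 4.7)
The plan is to treat parts (1) and (2) as the definitions they are, and focus the argument on the chain of equalities in (3). The cornerstone will be the Hilbert-space identifications $\Lambda(\mathcal{F}^{-1}(b))=\hat{\Lambda}(b)$ for $b\in\hat{D}$ and $\hat{\Lambda}(\mathcal{F}(a))=\Lambda(a)$ for $a\in D$, which say that the Fourier transform and its inverse implement the canonical isometries between the two GNS pictures living on the common Hilbert space $\mathcal{H}$. Once these are in hand, every equality in (3) collapses to the basic GNS relation $\varphi(y^{*}x)=\langle\Lambda(x),\Lambda(y)\rangle$ (and its dual), together with the characterization \eqref{(dualHaar)} of $\hat{\Lambda}$.

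First I would dispatch the opening equality $\langle b\,|\,a\rangle=\langle\hat{\Lambda}(b),\Lambda(a^{*})\rangle$. Writing $b=(\omega\otimes\operatorname{id})(W)$ and applying \eqref{(dualHaar)} with the choice $x=a^{*}$ gives $\langle\hat{\Lambda}(b),\Lambda(a^{*})\rangle=\omega(a)$, which coincides with $\langle b\,|\,a\rangle$ by the very definition \eqref{(pairing)} of the pairing. Next I would establish $\Lambda(\mathcal{F}^{-1}(b))=\hat{\Lambda}(b)$. The quickest route is to pair against an arbitrary $\Lambda(y)$ with $y\in\mathfrak{N}_{\varphi}$: expanding $\mathcal{F}^{-1}(b)=(\operatorname{id}\otimes\hat{\varphi})\bigl(W^{*}(1\otimes b)\bigr)$ and using that $W$ is a multiplicative unitary together with \eqref{(dualHaar)} produces $\omega(y^{*})$, which is exactly $\langle\hat{\Lambda}(b),\Lambda(y)\rangle$. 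The symmetric identity $\hat{\Lambda}(\mathcal{F}(a))=\Lambda(a)$ follows by the dual computation using $\hat{W}=\Sigma W^{*}\Sigma$ and the analogue of \eqref{(dualHaar)} on the dual side. With these in place,
\[
\varphi\bigl(a\mathcal{F}^{-1}(b)\bigr)=\langle\Lambda(\mathcal{F}^{-1}(b)),\Lambda(a^{*})\rangle=\langle\hat{\Lambda}(b),\Lambda(a^{*})\rangle,
\]
and similarly
\[
\hat{\varphi}\bigl(\mathcal{F}(a^{*})^{*}b\bigr)=\langle\hat{\Lambda}(b),\hat{\Lambda}(\mathcal{F}(a^{*}))\rangle=\langle\hat{\Lambda}(b),\Lambda(a^{*})\rangle.
\]
The last equality of the theorem is then immediate from the slice-map identity
\[
(\operatorname{id}\otimes\hat{\varphi})\bigl((a\otimes1)W^{*}(1\otimes b)\bigr)=a\,\mathcal{F}^{-1}(b),
\]
upon applying $\varphi$ to both sides.

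The main obstacle I anticipate is purely domain-theoretic rather than conceptual: one must verify that the slice-map manipulations above are legal (pulling $a$ out of $\operatorname{id}\otimes\hat{\varphi}$, Fubini between $\varphi$ and $\hat{\varphi}$) and that each of $a\mathcal{F}^{-1}(b)$, $\mathcal{F}(a^{*})^{*}b$, and $(a\otimes1)W^{*}(1\otimes b)$ actually lies in the appropriate space where the relevant weight is finite. This is precisely why the statement is restricted to the cores $D\subseteq\mathcal{A}$ and $\hat{D}\subseteq\hat{\mathcal{A}}$ rather than to the full subalgebras, and the construction of $D$ and $\hat{D}$ in Section~4 of \cite{BJKqfourier} is tailored so that these integrability issues can be handled cleanly. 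Once those foundations are invoked, the proof is a short sequence of expansions of the definitions.
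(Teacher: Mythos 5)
Your proposal is correct, and it follows essentially the route the paper intends: the paper itself does not reprove Theorem~\ref{pairingthm} but defers to \cite{BJKqfourier}, and the identities $\hat{\Lambda}({\mathcal F}(a))=\Lambda(a)$ and $\Lambda({\mathcal F}^{-1}(b))=\hat{\Lambda}(b)$ on which you hang the whole chain are exactly the facts (Propositions~3.5 and 3.7 of \cite{BJKqfourier}) that the paper later quotes and uses in Section~6. Your reduction of every equality to the GNS relation plus equation~\eqref{(dualHaar)}, with the domain issues delegated to the construction of $D$ and $\hat{D}$, matches the cited argument.
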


\begin{rem}
Here, $\varphi$ and $\hat{\varphi}$ are the left invariant Haar weights
for $(M,\Delta)$ and $(\hat{M},\hat{\Delta})$, while $\Lambda$ and $\hat
{\Lambda}$ are the associated GNS maps.  The maps ${\mathcal F}$ and 
${\mathcal F}^{-1}$ are actually defined in larger subspaces, but we 
restricted the domains here to $D$ and $\hat{D}$, for convenience.
As in the classical case, the Fourier inversion theorem holds:
$$
{\mathcal F}^{-1}\bigl({\mathcal F}(a)\bigr)=a, \ a\in D,\qquad
{\text {and}}\qquad
{\mathcal F}\bigl({\mathcal F}^{-1}(b)\bigr)=b, \ b\in\hat{D}.
$$
See \cite{BJKqfourier} for more careful discussion on all these, including 
the definition of the Fourier transform and the proof of the result on the 
dual pairing.
\end{rem}

\section{The quantum double}

The quantum double construction was originally introduced by Drinfeld \cite{Dr},
in the Hopf algebra framework.  The notion can be extended to the setting
of locally compact quantum groups.  See \cite{Ya} (also see \cite{BJKqdouble},
and some earlier results in \cite{PW} and Section~8 of \cite{BS}).  Some 
different formulations exist, but all of them are special cases of a more 
generalized notion of a {\em double crossed product construction\/} developed 
recently by Baaj and Vaes \cite{BjVa}.  While we do not plan to go to the 
full generality as in that paper, let us give here the definition adapted 
from \cite{BjVa}.

Let $(N,\Delta_N)$ be a locally compact quantum group, and let $W_N$ 
be its multiplicative unitary operator.  Write $(M_1,\Delta_1)=
(N,\Delta_N^{\operatorname{cop}})$ and $(M_2,\Delta_2)=(\hat{N},
\hat{\Delta}_N)$.  Suggested by Proposition~8.1 of \cite{BjVa}, consider 
the operators $K$ and $\hat{K}$ on ${\mathcal H}\otimes{\mathcal H}$:
$$
K=W_N(\hat{J}_1\otimes J_2)W_N^*,\qquad\hat{K}=W_N(J_1\otimes\hat{J}_2)W_N^*,
$$
where $J_1$, $\hat{J}_1$, $J_2$, $\hat{J}_2$ are the modular conjugations for
$M_1$, $\hat{M}_1$, $M_2$, $\hat{M}_2$.  In our case, we would actually have: 
$\hat{J}_1=J_2$ and $\hat{J}_2=J_1$.  Next, following Notation~3.2 of \cite{BjVa}, 
write:
$$
Z=K\hat{K}(\hat{J}_1J_1\otimes\hat{J}_2J_2).
$$
Then on ${\mathcal H}\otimes{\mathcal H}\otimes{\mathcal H}\otimes{\mathcal H}$,
define the unitary operator:
\begin{equation}\label{(Wm)}
W_m=(\Sigma V_1^*\Sigma)_{13}Z^*_{34}W_{2,24}Z_{34},
\end{equation}
where $V_1$ (right regular representation of $M_1$) and $W_2$ (left regular 
representation of $M_2$) are multiplicative unitary operators associated 
with $M_1$ and $M_2$.  By Proposition~3.5 and Theorem~5.3 of \cite{BjVa},
the operator $W_m$ is a multiplicative unitary operator, and it gives rise 
to a locally compact quantum group $(M_m,\Delta_m)$.  This is the ``double 
crossed product'' (in the sense of Baaj and Vaes \cite{BjVa}) of $(M_1,\Delta_1)$ 
and $(M_2,\Delta_2)$, and is to be called in Definition~\ref{qdouble} below 
as the dual of the quantum double.

\begin{defn}\label{qdouble}
Let $(N,\Delta_N)$ be a locally compact quantum group, with $W_N$ (``left regular 
representation'') and $V_N$ (``right regular representation'') being the associated 
multiplicative unitary operators.  In addition, denote by $J_N$, $\hat{J}_N$, 
$S_N$, $\varphi_N$, ... the relevant structure maps. 

Let $(M_1,\Delta_1)=(N,\Delta_N^{\operatorname{cop}})$, with the multiplicative 
unitary $W_1=\Sigma V_N^*\Sigma$.  We have: $J_1=J_N$ and $\hat{J}_1=\hat{J}_N$.
Also $V_1=(\hat{J}_1\otimes\hat{J}_1)\Sigma W_1^*\Sigma(\hat{J}_1\otimes
\hat{J}_1)$.  Since $J_1^2=\hat{J}_1^2=I_{\mathcal H}$, it becomes: $V_1=\Sigma 
W_N^*\Sigma$.  Meanwhile, let $(M_2,\Delta_2)=(\hat{N},\hat{\Delta}_N)$, which 
is associated with $W_2=\Sigma W_N^*\Sigma$.  We have: $J_2=\hat{J}_N$ and 
$\hat{J}_2=J_N$.  Using these ingredients, construct the multiplicative unitary 
operator $W_m\in{\mathcal B}({\mathcal H}\otimes{\mathcal H}\otimes{\mathcal H}
\otimes{\mathcal H})$, as given in equation~\eqref{(Wm)}.  Then:
\begin{enumerate}
  \item The {\em Drinfeld quantum double\/} is $(N_D,\Delta_D)$, given by 
the multiplicative unitary operator $W_D=\Sigma_{13}\Sigma_{24}W_m^*
\Sigma_{24}\Sigma_{13}$.  That is,
$$
N_D=\overline{\{(\operatorname{id}\otimes\operatorname{id}\otimes\Omega)(W_D):
\Omega\in{\mathcal B}({\mathcal H}\otimes{\mathcal H})_*\}}^w\,\bigl(\subseteq
{\mathcal B}({\mathcal H}\otimes{\mathcal H})\bigr),
$$
with the comultiplication $\Delta_D:N_D\to N_D\otimes N_D$, defined by
$\Delta_D(x):={W_D}^*(1\otimes1\otimes x)W_D$, for $x\in N_D$.
  \item The dual of the quantum double is $(\widehat{N_D},\widehat{\Delta_D})$,
determined by $W_m$.  Namely,
$$
\widehat{N_D}=\overline{\{(\operatorname{id}\otimes\operatorname{id}\otimes\Omega)
(W_m):\Omega\in{\mathcal B}({\mathcal H}\otimes{\mathcal H})_*\}}^w\,\bigl(\subseteq
{\mathcal B}({\mathcal H}\otimes{\mathcal H})\bigr),
$$
with the comultiplication $\widehat{\Delta_D}:\widehat{N_D}\to\widehat{N_D}
\otimes\widehat{N_D}$, given by $\widehat{\Delta_D}(y):={W_m}^*(1\otimes1
\otimes y)W_m$, for $y\in\widehat{N_D}$.
\end{enumerate}
By Theorem~5.3 of \cite{BjVa}, it is known that $N_D$ and $\widehat{N_D}$ 
are locally compact quantum groups, equipped with suitable Haar weights 
$\varphi_D$ and $\widehat{\varphi_D}$.
\end{defn}

Note here that we took the dual of $W_m$ in (1) to define the quantum double, 
so that our definition is more consistent with the ones given in the purely 
algebraic settings.  Because of this, our $(\widehat{N_D},\widehat{\Delta_D})$ 
is none other than $(M_m,\Delta_m)$, as defined in \cite{BjVa} (see that paper 
for details). 

While the Baaj and Vaes paper \cite{BjVa} discusses these in a more general 
setting, it is to be noted that the case of the quantum double of a locally 
compact quantum group is not explicitly studied there.  To be able to carry out 
the computations we have in mind, we need some specific details on the actual
structure of the quantum double and its dual.  This will be done in what follows.
Note that our setting here is still more general than the discussions given in 
\cite{PW}, \cite{BS}, \cite{Ya}, \cite{BJKqdouble}.

For convenience, we will just write from now on that $\Delta=\Delta_N$ and $W=W_N$. 
In our case, $V_1=\hat{W}=\Sigma W^*\Sigma$ and also $W_2=\hat{W}$, while $J=J_N
=J_1=\hat{J}_2$ and $\hat{J}=\hat{J}_N=\hat{J}_1=J_2$.  So we will have:
\begin{align}
\label{(Zdef)}
Z&=K\hat{K}(\hat{J}_1J_1\otimes\hat{J}_2J_2)=W(\hat{J}J\otimes\hat{J}J)W^*
(\hat{J}J\otimes J\hat{J})   \\
\label{(Wmdef)}
W_m&=(\Sigma V_1^*\Sigma)_{13}Z^*_{34}W_{2,24}Z_{34}=W_{13}Z^*_{34}\hat{W}_{24}Z_{34} \\
\label{(WDdef)}
W_D&=Z_{12}^*W_{24}Z_{12}\hat{W}_{13} 
\end{align}
We may occasionally be working at the $C^*$-algebra level.  In that case, we will 
consider $(A,\Delta)$ and $(\hat{A},\hat{\Delta})$, and the quantum double will be 
written as $(A_D,\Delta_D)$, and its dual $(\widehat{A_D},\widehat{\Delta_D})$. 
We just need to work with the same multiplicative unitary operators but replace 
the weak completions above to the norm completions. 

Let us begin first with $(\widehat{N_D},\widehat{\Delta_D})=(M_m,\Delta_m)$. 
See \cite{BjVa} for details. 

\begin{prop}\label{dualqdouble}
As a von Neumann algebra, we have: $\widehat{N_D}=N\otimes\hat{N}$, while 
the comultiplication $\widehat{\Delta_D}:\widehat{N_D}\to\widehat{N_D}\otimes
\widehat{N_D}$ is characterized as follows:
$$
\widehat{\Delta_D}=(\operatorname{id}\otimes\sigma\circ m\otimes\operatorname{id})
(\Delta_1^{\operatorname{cop}}\otimes\Delta_2)=(\operatorname{id}\otimes\sigma
\circ m\otimes\operatorname{id})(\Delta\otimes\hat{\Delta}).
$$
Here $\sigma:N\otimes\hat{N}\to\hat{N}\otimes N$ is the flip map, and 
$m:N\otimes\hat{N}\to N\otimes\hat{N}$ is the twisting map defined by 
$m(z)=ZzZ^*$.
\end{prop}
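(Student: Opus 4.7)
The plan is to specialize the general theory of double crossed products of Baaj and Vaes~\cite{BjVa} to our matched pair $(M_1,M_2)=(N,\hat{N})$, and to corroborate the abstract result with a direct calculation from the explicit factorization $W_m=W_{13}Z^*_{34}\hat{W}_{24}Z_{34}$.

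For the underlying von Neumann algebra, the identification $\widehat{N_D}=N\otimes\hat{N}$ is a direct invocation of Theorem~5.3 of~\cite{BjVa}, which identifies the double crossed product $M_m$ with $M_1\otimes M_2$ as a von Neumann algebra. To see this concretely, one slices $W_m$ in legs 3 and 4. Because $W\in N\otimes\hat{N}$ and $\hat{W}\in\hat{N}\otimes N$, the retained legs 1 and 2 of $W_{13}$ and $\hat{W}_{24}$ land in $N$ and $\hat{N}$ respectively, while the outer $Z_{34}$ and $Z^*_{34}$ act only on the sliced-out legs and can be absorbed into the slicing functional. Density of the resulting slices in $N\otimes\hat{N}$ then follows from the standard density of slices of $W$ and $\hat{W}$ individually.

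For the comultiplication formula, I would apply $\widehat{\Delta_D}(y)=W_m^*(1\otimes 1\otimes y)W_m$ to a generic element $y=a\otimes b$ with $a\in N$, $b\in\hat{N}$, and unwind the factorization from the inside out. This produces four nested conjugations: $\operatorname{Ad}(W_{13}^*)$ converts the position-3 factor $a$ into $\Delta(a)_{13}$ via $W^*(1\otimes a)W=\Delta(a)$; $\operatorname{Ad}(Z_{34})$ implements the twist $m$ on legs (3,4); $\operatorname{Ad}(\hat{W}_{24}^*)$ converts the resulting position-4 factor into $\hat{\Delta}(\cdot)_{24}$ via the analogous formula for $\hat{W}$; and finally $\operatorname{Ad}(Z_{34}^*)$ implements $m^{-1}$ on legs (3,4). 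To repackage this four-step result as $(\operatorname{id}\otimes\sigma\circ m\otimes\operatorname{id})(\Delta\otimes\hat{\Delta})$, I would invoke the matched-pair cocycle identity satisfied by $Z$ with respect to $\Delta$ and $\hat{\Delta}$, implicit in~\cite{BjVa}, which combines the outer $m^{-1}$ with the inner $m$ into a single middle twist applied to $a_{(2)}\otimes b_{(1)}$; the flip $\sigma$ then appears by reading off the final placement of $N$- and $\hat{N}$-valued legs.

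The main technical obstacle is this last repackaging. The cocycle relation on $Z$ is precisely what makes the four-step expression collapse to the clean tensor-product-plus-twist form stated in the proposition; a fully self-contained derivation would have to unwind that identity in the present notation, reproducing a good deal of the Baaj--Vaes matched-pair machinery. A shorter argument simply cites~\cite{BjVa} at this step and reads off the specialization.
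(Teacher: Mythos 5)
The paper offers no proof of this proposition at all---it simply states it and refers the reader to Baaj--Vaes \cite{BjVa}---so your proposal, which likewise rests on Theorem~5.3 of \cite{BjVa} for both the identification $\widehat{N_D}=N\otimes\hat{N}$ and the form of $\widehat{\Delta_D}$, is essentially the same approach. Your supplementary four-step unwinding of $\operatorname{Ad}(W_m^*)$ applied to $a\otimes b$ is sound as far as it goes, and you correctly flag the one genuinely nontrivial point (collapsing the inner $\operatorname{Ad}(Z_{34})$ and outer $\operatorname{Ad}(Z_{34}^*)$ into the single middle twist $m$ via the cocycle property of $Z$), which is precisely the content the paper delegates to \cite{BjVa}.
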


Its $C^*$-algebraic counterpart is rather tricky to describe.  In general, 
unless $W_D$ is regular (in the sense of Baaj and Skandalis \cite{BS}), 
it may be possible that $\widehat{A_D}\ne A\otimes\hat{A}$.  See discussion 
given in Section~9 of \cite{BjVa}.  Meanwhile, the description of the 
comultiplication $\widehat{\Delta_D}$ given above enables us to prove 
the following Lemma, which will be useful later:

\begin{lem}\label{lemmaW}
Let $W=W_N$, $\hat{W}=\Sigma W^*\Sigma$, $Z$ be the operators defined earlier. 
Then we have:
$$
Z_{34}Z_{12}^*W_{24}Z_{12}\hat{W}_{13}=\hat{W}_{13}Z_{12}^*W_{24}Z_{12}Z_{34}.
$$
\end{lem}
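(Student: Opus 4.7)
The strategy I would pursue has two layers: first a structural reduction using where each operator ``lives'', and then the actual computation.

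The first observation is that $Z_{12}$ and $Z_{34}$ act nontrivially on disjoint tensor legs of $\mathcal{H}\otimes\mathcal{H}\otimes\mathcal{H}\otimes\mathcal{H}$, so they commute. Conjugating the claimed identity by $Z_{12}$ on the appropriate sides therefore reduces it to the equivalent assertion that the unitary $Z_{12}\hat{W}_{13}Z_{12}^{*}$ commutes with $W_{24}Z_{34}$. That is a much more suggestive statement: the conjugate $Z_{12}\hat{W}_{13}Z_{12}^{*}$ is exactly the kind of twist that appears in Proposition~\ref{dualqdouble}, where the dual comultiplication on $\widehat{N_{D}}=N\otimes\hat{N}$ is built by flipping and then twisting by $m(z)=ZzZ^{*}$.

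My preferred route would be to \emph{not} grind directly, but to extract the identity from the fact that $W_m$ is a multiplicative unitary whose legs are characterized by Proposition~\ref{dualqdouble}. Concretely, apply both the intrinsic formula $\widehat{\Delta_{D}}(y)=W_m^{*}(1\otimes1\otimes y)W_m$ and the explicit formula $\widehat{\Delta_{D}}=(\operatorname{id}\otimes\sigma\circ m\otimes\operatorname{id})(\Delta\otimes\hat{\Delta})$ to a carefully chosen element of $\widehat{N_D}=N\otimes\hat{N}$ (the canonical choice being $W$ itself, or a slice $(\operatorname{id}\otimes\omega)(W)\otimes 1$), using $W_m=W_{13}Z^{*}_{34}\hat{W}_{24}Z_{34}$ from~\eqref{(Wmdef)} and the pentagon relations $(\Delta\otimes\operatorname{id})(W)=W_{13}W_{23}$ and the analogue for $\hat{W}$. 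Matching the two resulting expressions on $\mathcal{H}^{\otimes 4}$ (or on a larger tensor power, then slicing out) should produce exactly the stated identity, since the only nontrivial piece of $W_m$ that ``rearranges'' across legs $1,2$ is the $Z_{34}$ twist.

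If that conceptual route stalls, the fallback is a direct verification. One expands $Z_{34}=W_{34}(U\otimes U)_{34}W_{34}^{*}(U\otimes U^{*})_{34}$ with $U=\hat{J}J$, moves $Z_{34}$ past $W_{24}$ using the pentagon relation (specifically the reindexing $W_{24}W_{34}=W_{23}^{*}W_{34}W_{23}$ of $W_{12}W_{13}W_{23}=W_{23}W_{12}$), and uses the intertwining $W^{*}=(\hat{J}\otimes J)W(\hat{J}\otimes J)$ to push the $U$'s around. A parallel manipulation on the right-hand side --- using that $\hat{W}=\Sigma W^{*}\Sigma$ so that $\hat{W}_{13}$ interacts with $Z_{12}^{*}$ through pentagon for $\hat{W}$ --- should bring both sides to the same normal form.

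The main obstacle is bookkeeping rather than any deep new idea: each side expands into roughly a dozen factors on $\mathcal{H}^{\otimes 4}$ once $Z_{12}$ and $Z_{34}$ are written out, and one must keep track both of which legs each $W$, $\hat{W}$, $J$, $\hat{J}$ acts on and of the order of multiplication. The conceptual approach via Proposition~\ref{dualqdouble} avoids this by bundling the bookkeeping into the already-established fact that $W_m$ is a (genuine) multiplicative unitary, which is why I would try it first.
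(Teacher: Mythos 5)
Your preferred route is essentially the paper's own proof: it equates $(\widehat{\Delta_D}\otimes\operatorname{id})(W_m)=W_{m,13}W_{m,23}$ (the multiplicativity of $W_m$, applied to $W_m$ itself on $\mathcal{H}^{\otimes 6}$) with the expression obtained from the explicit formula $\widehat{\Delta_D}=(\operatorname{id}\otimes\sigma\circ m\otimes\operatorname{id})(\Delta\otimes\hat{\Delta})$ via the pentagon relations for $W$ and $\hat{W}$, then cancels, renumbers legs, and takes adjoints to land on the stated identity. (Only your preliminary ``reduction'' is slightly misstated: conjugating by $Z_{12}$ yields $Z_{34}W_{24}X=XW_{24}Z_{34}$ with $X=Z_{12}\hat{W}_{13}Z_{12}^{*}$, an intertwining rather than a commutation with $W_{24}Z_{34}$, since $Z_{34}$ and $W_{24}$ share leg $4$; but this plays no role in your main argument.)
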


\begin{proof}
Since $W_m\in N_D\otimes\widehat{N_D}$ is the multiplicative unitary operator 
giving rise to the comultiplication $\widehat{\Delta_D}$, we should have (see 
\cite{BS}):
\begin{equation}\label{(W_mregularrep)}
(\widehat{\Delta_D}\otimes\operatorname{id})(W_m)=W_{m,13}W_{m,23}.
\end{equation}
From the definition of $W_m$ given in equation~\eqref{(Wmdef)}, the right side 
becomes:
$$
W_{m,13}W_{m,23}=W_{15}Z_{56}^*\hat{W}_{26}Z_{56}W_{35}
Z_{56}^*\hat{W}_{46}Z_{56}.
$$
Meanwhile, remembering that $\hat{\Delta}(b)=\hat{W}^*(1\otimes b)\hat{W}$ (for 
$b\in\hat{A}$) and that $\Delta(a)=W^*(1\otimes a)W$ (for $a\in A$), we have:
\begin{align}
(\Delta\otimes\hat{\Delta}\otimes\operatorname{id})(W_m)
&=(\Delta\otimes\hat{\Delta}\otimes\operatorname{id})\bigl[W_{13}Z_{34}^*\hat{W}_{24}
Z_{34}\bigr]   \notag \\
&=\bigl[W_{12}^*W_{25}W_{12}\bigr]Z_{56}^*\bigl[\hat{W}_{34}^*\hat{W}_{46}\hat{W}_{34}
\bigr]Z_{56}    \notag \\
&=W_{15}W_{25}Z_{56}^*\hat{W}_{36}\hat{W}_{46}Z_{56}  \notag \\
&=W_{15}W_{25}[Z_{56}^*\hat{W}_{36}Z_{56}][Z_{56}^*\hat{W}_{46}Z_{56}].
\notag
\end{align}
In the third equality, we used the pentagon relations for $W$ and for $\hat{W}$ 
(being multiplicative unitaries).  So we have:
\begin{align}
(\widehat{\Delta_D}\otimes\operatorname{id})(W_m)&=\bigl((\operatorname{id}\otimes
\sigma\circ m\otimes\operatorname{id})(\Delta\otimes\hat{\Delta})\bigr)(W_m)  \notag \\
&=Z_{32}W_{15}W_{35}[Z_{56}^*\hat{W}_{26}Z_{56}][Z_{56}^*\hat{W}_{46}Z_{56}]Z_{32}^*  
\notag \\
&=W_{15}Z_{32}W_{35}[Z_{56}^*\hat{W}_{26}Z_{56}]Z_{32}^*[Z_{56}^*\hat{W}_{46}Z_{56}].
\notag
\end{align}
Therefore, the equation~\eqref{(W_mregularrep)} now becomes (after obvious cancellations
and then multiplying $Z_{32}^*$ to both sides):
$$
W_{35}[Z_{56}^*\hat{W}_{26}Z_{56}]Z_{32}^*=Z_{32}^*Z_{56}^*\hat{W}_{26}Z_{56}W_{35}.
$$
Re-numbering the legs (legs 2,3,5,6 to become 4,3,1,2), we have:
$$
W_{31}Z_{12}^*\hat{W}_{42}Z_{12}Z_{34}^*=Z_{34}^*Z_{12}^*\hat{W}_{42}Z_{12}W_{31}.
$$
Now taking the adjoints from both sides, it becomes:
$$
Z_{34}Z_{12}^*\hat{W}_{42}^*Z_{12}W_{31}^*=W_{31}^*Z_{12}^*\hat{W}_{42}^*Z_{12}Z_{34}.
$$
Since $\hat{W}=\Sigma W^*\Sigma$, the result of Lemma follows immediately.
\end{proof}

Let us now turn our attention to $(N_D,\Delta_D)$.  We will give a more concrete 
realization of $N_D$ (in Proposition~\ref{A_Dalg}), as well as its coalgebra 
structure (in Proposition~\ref{A_Dcoalg}).  See also Theorem~5.3 of \cite{BjVa}.

\begin{prop}\label{A_Dalg}
Define $\pi:N\to{\mathcal B}({\mathcal H}\otimes{\mathcal H})$ and $\pi':
\hat{N}\to{\mathcal B}({\mathcal H}\otimes{\mathcal H})$ by
$$
\pi(f):=Z^*(1\otimes f)Z \qquad {\text {and}} \qquad \pi'(k):=k\otimes1.
$$
Then $N_D$ is the von Neumann algebra generated by the operators $\pi(f)\pi'(k)$, 
for $f\in N$, $k\in\hat{N}$.  The maps $\pi$ and $\pi'$ are in fact 
$W^*$-algebra homomorphisms.  Namely,
$$
\pi:N\to N_D\qquad{\text {and}}\qquad\pi':\hat{N}\to N_D.
$$
\end{prop}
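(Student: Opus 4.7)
The plan is first to verify that $\pi$ and $\pi'$ are injective normal ${}^*$-homomorphisms, and then to compute slices of $W_D$ on its last two legs against tensor-product functionals in order to identify the resulting elements of $N_D$ as products $\pi(f)\pi'(k)$.

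That $\pi'$ is an injective normal ${}^*$-homomorphism is clear since it is the amplification $k\mapsto k\otimes1$. For $\pi$, since $Z$ is unitary (it is a product of unitaries by~\eqref{(Zdef)}), the map $\pi(f)=Z^*(1\otimes f)Z$ is conjugation by $Z$ composed with the normal embedding $f\mapsto1\otimes f$, so it is normal, injective, multiplicative and ${}^*$-preserving.

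Turning to the main claim, I would compute the slice of $W_D=Z_{12}^*W_{24}Z_{12}\hat{W}_{13}$ against $\omega_3\otimes\omega_4\in{\mathcal B}({\mathcal H})_*\otimes{\mathcal B}({\mathcal H})_*$. Because $Z_{12}^*$, $W_{24}$, and $Z_{12}$ are independent of leg $3$, the $\omega_3$-slice on leg $3$ only acts on the right-most factor $\hat{W}_{13}$ and produces $k:=(\operatorname{id}\otimes\omega_3)(\hat{W})\in\hat{N}$ on leg $1$; similarly, the $\omega_4$-slice on leg $4$ only hits $W_{24}$ and produces $f:=(\operatorname{id}\otimes\omega_4)(W)\in N$ on leg $2$. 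This gives
$$
(\operatorname{id}\otimes\operatorname{id}\otimes\omega_3\otimes\omega_4)(W_D)
=Z^*(1\otimes f)Z\cdot(k\otimes1)=\pi(f)\pi'(k).
$$
As $\omega_3,\omega_4$ range over ${\mathcal B}({\mathcal H})_*$, $f$ and $k$ sweep out $\sigma$-weakly dense subspaces of $N$ and $\hat{N}$ respectively, by the characterizations recalled in Section~2. Since linear combinations $\omega_3\otimes\omega_4$ are norm-dense in ${\mathcal B}({\mathcal H}\otimes{\mathcal H})_*$, the collection of such slices has the same $\sigma$-weak closure as the full family of slices of $W_D$, which by definition is $N_D$. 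This already yields the inclusion of $N_D$ inside the von Neumann algebra generated by $\pi(N)\cup\pi'(\hat{N})$.

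The step I expect to require the most care is passing from products $\pi(f)\pi'(k)$ (with $f,k$ in dense subspaces) to the individual elements $\pi(f)$ and $\pi'(k)$ sitting inside $N_D$. My approach is to use Kaplansky's density theorem to produce a bounded net $k_\alpha$ from the dense subspace of $\hat{N}$ with $k_\alpha\to1$ $\sigma$-weakly; then $\pi(f)\pi'(k_\alpha)\in N_D$, and separate $\sigma$-weak continuity of multiplication (with $\pi(f)$ fixed) forces $\pi(f)\in N_D$ in the limit. Normality of $\pi$ together with $\sigma$-weak closedness of $N_D$ then extends the conclusion to all $f\in N$, and a symmetric argument handles $\pi'(\hat{N})\subseteq N_D$. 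Once both ranges are known to lie inside $N_D$, the proposition is immediate.
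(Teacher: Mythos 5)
Your proof is correct and takes essentially the same route as the paper: slice $W_D=Z_{12}^*W_{24}Z_{12}\hat{W}_{13}$ on its last two legs against product functionals to get $\pi(f)\pi'(k)$ with $f=(\operatorname{id}\otimes\omega_4)(W)$ and $k=(\operatorname{id}\otimes\omega_3)(\hat{W})$, then conclude by density of such slices and of such $f,k$. The paper dismisses the final step (that $\pi(N)$ and $\pi'(\hat{N})$ individually land in $N_D$) as obvious from the definitions, so your Kaplansky-plus-separate-continuity argument is a welcome filling-in of detail rather than a different approach.
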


\begin{proof}
Recall from equation~\eqref{(WDdef)} that $W_D=Z_{12}^*W_{24}Z_{12}\hat{W}_{13}$.
So for $\omega,\omega'\in{\mathcal B}({\mathcal H})_*$, we have:
\begin{align}
(\operatorname{id}\otimes\operatorname{id}\otimes\omega\otimes\omega')(W_D)
&=(\operatorname{id}\otimes\operatorname{id}\otimes\omega\otimes\omega')
(Z_{12}^*W_{24}Z_{12}\hat{W}_{13}) \notag \\
&=Z^*\bigl[1\otimes(\operatorname{id}\otimes\omega')(W)\bigr]Z
\bigl[(\operatorname{id}\otimes\omega)(\hat{W})\otimes1\bigr]=\pi(f)\pi'(k),
\notag
\end{align}
where $f=(\operatorname{id}\otimes\omega')(W)$ and $k=(\operatorname{id}
\otimes\omega)(\hat{W})$.  This makes sense, because $W\in N\otimes\hat{N}$ 
and $\hat{W}\in\hat{N}\otimes N$.  Recall the discussion in Section~2 above 
or Proposition~2.15 of \cite{KuVavN}.  In fact, the operators $(\operatorname{id}
\otimes\omega')(W)$, $\omega'\in{\mathcal B}({\mathcal H})_*$, generate the 
von Neumann algebra $N$; while the operators $(\operatorname{id}\otimes\omega)
(\hat{W})$, $\omega\in{\mathcal B}({\mathcal H})_*$, generate $\hat{N}$.

Since the operators $(\operatorname{id}\otimes\operatorname{id}\otimes\omega
\otimes\omega')(W_D)$ generate $N_D$ by Definition~\ref{qdouble}, the claim 
of the proposition is proved.  The second part of the proposition is obvious 
from the definitions.
\end{proof}

\begin{rem}
For future computation purposes, we will from now on regard $N_D$ to be the 
von Neumann algebra generated by the operators $\pi'(k)\pi(f)$, (for $f\in M$, 
$k\in\hat{M}$).  This is of course true, given the results of the previous 
proposition.  To be more specific, write:
\begin{equation}\label{Pi}
\Pi(k\otimes f):=\pi'(k)\pi(f),\qquad f\in N,\ k\in\hat{N}.
\end{equation}
Then we have: $N_D=\overline{\bigl\{\Pi(k\otimes f):f\in N,k\in\hat{N}\bigr\}}^w$.
Its $C^*$-algebraic counterpart is: $A_D=\overline{\bigl\{\Pi(k\otimes f):
f\in A,k\in\hat{A}\bigr\}}^{\|\ \|}$. 
\end{rem}

\begin{prop}\label{A_Dcoalg}
For $f\in N$ and $k\in\hat{N}$, we have:
\begin{align}
\Delta_D\bigl(\Pi(k\otimes f)\bigr)&=\Delta_D\bigl(\pi'(k)\pi(f)\bigr)
=\bigl[(\pi'\otimes\pi')\bigl(\hat{\Delta}k\bigr)\bigr]\bigl[(\pi\otimes\pi)
(\Delta f)\bigr]    \notag \\
&=(\Pi\otimes\Pi)\left(\sum k_{(1)}\otimes f_{(1)}\otimes k_{(2)}\otimes f_{(2)}\right).
\notag
\end{align}
\end{prop}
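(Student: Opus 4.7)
The plan is to use multiplicativity of $\Delta_D$ and compute the two factors $\Delta_D(\pi'(k))$ and $\Delta_D(\pi(f))$ separately by direct manipulation of the explicit expression $W_D = Z_{12}^*W_{24}Z_{12}\hat{W}_{13}$ from equation~\eqref{(WDdef)} and the definition $\Delta_D(x) = W_D^*(1\otimes 1\otimes x)W_D$. Since $\Delta_D$ is a unital normal ${}^*$-homomorphism by the general theory cited in Definition~\ref{qdouble}, once these two pieces are established the first equality follows by multiplicativity, and the rewriting via $(\Pi\otimes\Pi)$ is immediate from the definition \eqref{Pi} of $\Pi$.

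For $\Delta_D(\pi'(k))$, I would embed $\pi'(k)=k\otimes 1$ into legs $(3,4)$ as $1\otimes 1\otimes k\otimes 1$. Both $Z_{12}$ and $W_{24}$ act trivially on leg~$3$, so they commute through and cancel in pairs, leaving
\begin{equation*}
\Delta_D(\pi'(k)) = \hat{W}_{13}^*(1\otimes 1\otimes k\otimes 1)\hat{W}_{13}.
\end{equation*}
This is precisely $\hat{\Delta}(k)$ placed in legs $(1,3)$. Writing $\hat{\Delta}(k)=\sum k_{(1)}\otimes k_{(2)}$, the result is $\sum k_{(1)}\otimes 1\otimes k_{(2)}\otimes 1 = (\pi'\otimes\pi')(\hat{\Delta}k)$, as required.

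For $\Delta_D(\pi(f))$, the embedding of $\pi(f)=Z^*(1\otimes f)Z$ into legs $(3,4)$ is $Z_{34}^*(1\otimes 1\otimes 1\otimes f)Z_{34}$, and the obstacle is the mismatch between the $Z_{34}$ appearing in $\pi(f)$ and the $Z_{12}$ in $W_D$. This is exactly what Lemma~\ref{lemmaW} is designed to handle: it yields $Z_{34}W_D = \hat{W}_{13}Z_{12}^*W_{24}Z_{12}Z_{34}$, so after rewriting
\begin{equation*}
\Delta_D(\pi(f)) = (Z_{34}W_D)^*(1\otimes 1\otimes 1\otimes f)(Z_{34}W_D),
\end{equation*}
the factors $\hat{W}_{13}$ and $Z_{12}$ (acting trivially on leg~$4$) commute past and cancel in pairs, leaving $Z_{34}^*Z_{12}^*W_{24}^*(1\otimes 1\otimes 1\otimes f)W_{24}Z_{12}Z_{34}$. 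Now $W_{24}^*(1\otimes 1\otimes 1\otimes f)W_{24} = \Delta(f)_{24} = \sum 1\otimes f_{(1)}\otimes 1\otimes f_{(2)}$, and a short commutation argument (using that $Z_{12}$ and $Z_{34}$ act on disjoint legs and commute with elements in the complementary legs) rewrites the outcome as $\sum \pi(f_{(1)})\otimes\pi(f_{(2)}) = (\pi\otimes\pi)(\Delta f)$.

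The only genuine obstacle in the argument is the $Z_{34}$ vs.\ $Z_{12}$ intertwining in the $\pi(f)$ step; every other manipulation is just commutation between operators on disjoint legs, together with the defining relations $\Delta(f)=W^*(1\otimes f)W$ and $\hat{\Delta}(k)=\hat{W}^*(1\otimes k)\hat{W}$. It is precisely to resolve that intertwining that Lemma~\ref{lemmaW} was established, and I expect it to be the only nontrivial ingredient beyond bookkeeping.
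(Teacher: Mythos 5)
Your proposal is correct and follows essentially the same route as the paper's proof: split by multiplicativity of $\Delta_D$, cancel the $Z_{12}$ and $W_{24}$ factors against the leg-$3$ element to get $(\pi'\otimes\pi')(\hat{\Delta}k)$, and invoke Lemma~\ref{lemmaW} to move $Z_{34}$ past $W_D$ so that the leg-$4$ computation reduces to $Z_{34}^*Z_{12}^*[\Delta f]_{24}Z_{12}Z_{34}=(\pi\otimes\pi)(\Delta f)$. You have also correctly identified Lemma~\ref{lemmaW} as the only nontrivial ingredient, exactly as in the paper.
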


\begin{proof}
In the second line, we used the Sweedler's notation (see \cite{Mo}), 
where we write: $\Delta f=\sum f_{(1)}\otimes f_{(2)}$.
For computation, observe that
\begin{align}
\Delta_D\bigl(\pi'(k)\pi(f)\bigr)&={W_D}^*(1\otimes1\otimes\pi'(k)\pi(f))W_D
\notag \\
&=\bigl[{W_D}^*\bigl(1\otimes1\otimes\pi'(k)\bigr)W_D\bigr]\bigl[{W_D}^*
\bigl(1\otimes1\otimes\pi(f)\bigr)W_D\bigr].  \notag
\end{align}
Remembering the definitions of $W_D$ and $\pi'$ and $\pi$, we have:
\begin{align}
{W_D}^*\bigl(1\otimes1\otimes\pi'(k)\bigr)W_D
&=\hat{W}_{13}^*Z_{12}^*W_{24}^*Z_{12}(1\otimes1\otimes k\otimes1)
Z_{12}^*W_{24}Z_{12}\hat{W}_{13}  \notag \\
&=\hat{W}_{13}^*(1\otimes1\otimes k\otimes1)\hat{W}_{13}
=\bigl[\hat{\Delta}(k)\big]_{13}=(\pi'\otimes\pi')(\hat{\Delta}k).
\notag
\end{align}
Meanwhile, by Lemma~\ref{lemmaW}, we have:
\begin{align}
{W_D}^*\bigl(1\otimes1\otimes\pi(f)\bigr)W_D
&=\hat{W}_{13}^*Z_{12}^*W_{24}^*Z_{12}\bigl[Z^*(1\otimes f)Z\bigr]_{34}
Z_{12}^*W_{24}Z_{12}\hat{W}_{13}   \notag \\
&=Z_{34}^*Z_{12}^*W_{24}^*Z_{12}\hat{W}_{13}^*(1\otimes1\otimes1\otimes f)
\hat{W}_{13}Z_{12}^*W_{24}Z_{12}Z_{34}   \notag \\
&=Z_{34}^*Z_{12}^*W_{24}^*(1\otimes1\otimes1\otimes f)W_{24}Z_{12}Z_{34}
\notag \\
&=Z_{34}^*Z_{12}^*\bigl[\Delta(f)\bigr]_{24}Z_{12}Z_{34}=(\pi\otimes\pi)
(\Delta f).
\notag
\end{align}
Combining these two results, we prove the proposition.
\end{proof}

\begin{rem}
From the proof above, we see clearly that \ $(\pi\otimes\pi)\circ\Delta
=\Delta_D\circ\pi$, and that \ $(\pi'\otimes\pi')\circ\hat{\Delta}=\Delta_D
\circ\pi'$.  From these observations, we see that the ${}^*$-homomorphisms 
$\pi$ and $\pi'$ defined earlier are also coalgebra homomorphisms.
\end{rem}

As noted in Definition~\ref{qdouble}, the general theory assures us that 
$(N_D,\Delta_D)$ and $(\widehat{N_D},\widehat{\Delta_D})$ are indeed 
(mutually dual) locally compact quantum groups.  In particular, one can 
consider the (left) Haar weight $\varphi_D$ of $N_D$ and the (left) Haar 
weight $\widehat{\varphi_D}$ of $\widehat{N_D}$.  We give the descriptions 
of $\varphi_D$ and $\widehat{\varphi_D}$ below.

\begin{prop}\label{haarweight}
\begin{enumerate}
  \item The left Haar weight, $\varphi_D$, on $(N_D,\Delta_D)$ is characterized by 
the following:
$$
\varphi_D\bigl(\Pi(k\otimes f)\bigr)=\varphi_D\bigl(\pi'(k)\pi(f)\bigr)
=\hat{\varphi}(k)\varphi(f),\qquad {\text { for $f\in N$, $k\in\hat{N}$.}}
$$
  \item The left Haar weight, $\widehat{\varphi_D}$, on $\widehat{N_D}=N\otimes\hat{N}$ 
is as follows:
$$
\widehat{\varphi_D}(a\otimes b)=\varphi(a)\hat{\psi}(b).
$$
\end{enumerate}
\end{prop}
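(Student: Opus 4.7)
My plan for both parts is to write down a natural candidate weight, verify that it is normal, semifinite and faithful, check left invariance using the explicit comultiplication formulas proved above, and then invoke the uniqueness (up to positive scalar) of Haar weights on a locally compact quantum group to conclude.

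For part (1), the strategy is to use the linear map $\Pi:\hat{N}\otimes N\to N_D$ from Proposition~\ref{A_Dalg} and the remark following it to transport the tensor product weight $\hat{\varphi}\otimes\varphi$ to $N_D$; with this definition the prescribed formula is tautological on the generators $\Pi(k\otimes f)$, and the n.s.f.\ properties are inherited from the two factors. To verify left invariance I would apply the formula $\Delta_D\bigl(\Pi(k\otimes f)\bigr)=(\Pi\otimes\Pi)\bigl(\sum k_{(1)}\otimes f_{(1)}\otimes k_{(2)}\otimes f_{(2)}\bigr)$ of Proposition~\ref{A_Dcoalg}, slice with an arbitrary normal state $\omega$ in the first leg and $\varphi_D$ in the second, and reduce to the identities $\sum k_{(1)}\hat{\varphi}(k_{(2)})=\hat{\varphi}(k)\cdot 1$ and $\sum f_{(1)}\varphi(f_{(2)})=\varphi(f)\cdot 1$, which express the left invariance of $\hat{\varphi}$ on $\hat{N}$ and of $\varphi$ on $N$. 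The slice then collapses to $\omega(1)\cdot\hat{\varphi}(k)\varphi(f)$, as required, and the uniqueness of Haar weights closes the argument.

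For part (2), I would start from the identification $\widehat{N_D}=N\otimes\hat{N}$ of Proposition~\ref{dualqdouble} and take $\widehat{\varphi_D}:=\varphi\otimes\hat{\psi}$, which is automatically a tensor product n.s.f.\ weight. The tool is the formula $\widehat{\Delta_D}=(\operatorname{id}\otimes\sigma\circ m\otimes\operatorname{id})(\Delta\otimes\hat{\Delta})$. Ignoring the twist $m$ for the moment and taking a factored slice $\omega=\omega_1\otimes\omega_2$, the computation separates, after the middle flip $\sigma$, into the two pieces $(\omega_1\otimes\varphi)\bigl(\Delta(a)\bigr)$ and $(\omega_2\otimes\hat{\psi})\bigl(\hat{\Delta}(b)\bigr)$, which collapse by the left invariance of $\varphi$ on $N$ and the right invariance of $\hat{\psi}$ on $\hat{N}$ respectively; this also explains why $\hat{\psi}$, rather than $\hat{\varphi}$, must appear, since the flip $\sigma$ effectively turns the $\hat{N}$-slot into a ``right'' slot.

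The main obstacle is handling the nontrivial twist $m(z)=ZzZ^*$ sandwiched inside $\widehat{\Delta_D}$, which obstructs the clean separation above. The most direct resolution is to invoke Theorem~5.3 of \cite{BjVa}, which identifies the left invariant weight on a general double crossed product and specializes exactly to the statement of~(2). A more hands-on alternative is to prove that conjugation by $Z$ preserves $\varphi\otimes\hat{\psi}$, using the explicit expression for $Z$ in equation~\eqref{(Zdef)} together with the relations $R(x)=\hat{J}x^*\hat{J}$ and $\hat{R}(y)=Jy^*J$ that tie the Haar weights to the modular conjugations. Once the $m$-invariance of $\varphi\otimes\hat{\psi}$ is in hand, the computation reduces to the untwisted case and uniqueness of Haar weights completes the proof.
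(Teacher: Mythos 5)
Your proposal matches the paper's proof: part (1) is verified exactly as you describe, by expanding $\Delta_D\bigl(\Pi(k\otimes f)\bigr)$ via Proposition~\ref{A_Dcoalg}, slicing, reducing to the left invariance of $\hat{\varphi}$ and $\varphi$, and invoking uniqueness of Haar weights, while part (2) is obtained by citing Theorem~5.3 of \cite{BjVa}. The only detail you gloss over is that identifying the Baaj--Vaes weight $\psi_1\otimes(\varphi_2)_{k_2}$ with $\varphi\otimes\hat{\psi}$ requires Proposition~8.1 of \cite{BjVa} (which gives $k_2=\delta_2$, the modular element of $\hat{N}$, so that $(\varphi_2)_{k_2}=\hat{\varphi}_{\delta_2}=\hat{\psi}$) together with $\psi_1=\varphi$ coming from $\Delta_1=\Delta^{\operatorname{cop}}$.
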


\begin{proof}
For (2), concerning the Haar weight on $(\widehat{N_D},\widehat{\Delta_D})$, see 
Theorem~5.3 of \cite{BjVa}, which says: $\varphi_m=\psi_1\otimes(\varphi_2)_{k_2}$. 
In our case, $\psi_1=\varphi$, because $(M_1,\Delta_1)=(N,\Delta^{\operatorname{cop}})$, 
while $\varphi_2=\hat{\varphi}$, because $(M_2,\Delta_2)=(\hat{N},\hat{\Delta})$. 
Moreover, our case being the ordinary quantum double of a locally compact quantum 
group, Proposition~8.1 of \cite{BjVa} indicates that $k_2=\delta_2$, the ``modular 
element'' of $(\hat{N},\hat{\Delta})$.  We thus have: $(\varphi_2)_{k_2}=\hat
{\varphi}_{\delta_2}=\hat{\psi}$.  

Consider now $\varphi_D$ given in (1).  To verify the left invariance, recall 
Proposition~\ref{A_Dcoalg} and compute:
\begin{align}
(\Omega\otimes\varphi_D)\bigl(\Delta_D(\Pi(k\otimes f))\bigr)&=\sum(\Omega\otimes\varphi_D)
\bigl((\Pi\otimes\Pi)(k_{(1)}\otimes f_{(1)}\otimes k_{(2)}\otimes f_{(2)})\bigr)
\notag \\
&=\sum\bigl[\Omega\bigl(\pi'(k_{(1)})\pi(f_{(1)})\bigr)
\varphi_D\bigl(\pi'(k_{(2)})\pi(a_{(2)})\bigr)\bigr]    \notag \\
&=\sum\bigl[\Omega\bigl((k_{(1)}\otimes1)Z^*(1\otimes f_{(1)})Z\bigr)
\hat{\varphi}(k_{(2)})\varphi(a_{(2)})\bigr].
\notag
\end{align}
Remembering the left invariance property of $\varphi$, which says: $\varphi\bigl((\omega
\otimes\operatorname{id})(\Delta f)\bigr)=\sum\bigl[\omega(f_{(1)})\varphi(f_{(2)})\bigr]
=\omega(1)\varphi(f)$, and similarly for $\hat{\varphi}$, we thus have:
$$
(\Omega\otimes\varphi_D)\bigl(\Delta_D(\Pi(k\otimes f))\bigr)
=\Omega(1\otimes1)\hat{\varphi}(k)\varphi(f)
=\Omega(1\otimes1)\varphi_D\bigl(\Pi(k\otimes f)\bigr),
$$
which is none other than the left invariance property for $\varphi_D$.  Though our 
proof is done only at the dense subalgebra level consisting of the $\Pi(k\otimes f)$,
it is sufficient, since we already know the existence of the unique Haar weight 
from the general theory.  By uniqueness, $\varphi_D$ described here must be the 
dual Haar weight on $(N_D,\Delta_D)$ corresponding to $\widehat{\varphi_D}$.
\end{proof}

Since we are not going to be prominently using them in this paper, we will skip the 
discussions on the right Haar weights and the antipode maps.  But let us just remind 
the reader that the antipode map $S_D$ can be obtained using the characterization 
given in equation~\eqref{(antipode)}, and similarly for $\widehat{S_D}$, working now 
with the operator $W_D$ instead.

\section{The twisting of the quantum double}

As is the case in the purely algebraic setting of QUE algebras \cite{Dr}, \cite{CP}, 
the quantum double $(A_D,\Delta_D)$ or $(N_D,\Delta_D)$ is equipped with a ``quantum 
universal $R$-matrix'' type operator ${\mathcal R}$.  Our plan is to use this 
operator to ``twist (deform)'' the comultiplication $\Delta_D$.

Let us begin by giving the definition and the construction of ${\mathcal R}$, 
in the operator algebra setting.  The approach is more or less the same as in 
Section~6 of \cite{BJKqdouble}, which was in turn adopted from Section~8 of 
\cite{BS}.  On the other hand, some modifications were necessary, because 
the current situation is more general than those in \cite{BS} and in 
\cite{BJKqdouble}, where the discussions were restricted to the case of 
so-called ``Kac systems''.  At present, the proof here seems to be the one 
that is being formulated in the most general setting.

\begin{lem}\label{lemmaR}
Let $W$, $\hat{W}$, $Z$ be the operators in ${\mathcal B}({\mathcal H}\otimes
{\mathcal H})$ defined earlier.  Then we have:
\begin{enumerate}
  \item $Z_{12}^*W_{45}W_{25}Z_{12}\hat{W}_{14}
=\hat{W}_{14}Z_{12}^*W_{25}W_{45}Z_{12}$
  \item $\hat{W}_{35}\hat{W}_{15}Z_{34}^*\hat{W}_{14}Z_{34}
=Z_{34}^*\hat{W}_{14}Z_{34}\hat{W}_{15}\hat{W}_{35}$
\end{enumerate}
\end{lem}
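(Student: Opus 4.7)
Both identities are five-leg pentagon-type relations closely paralleling the four-leg Lemma \ref{lemmaW}, and I would prove them by the same strategy: extract them from the multiplicative unitary property together with the pentagon equations for $W$ and $\hat{W}$, with one extra comultiplication beyond what Lemma \ref{lemmaW} uses.

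For identity (1), the first simplification is that $W_{45}$ commutes with $Z_{12}$, since they act on disjoint sets of legs. Thus (1) is equivalent to the braiding relation
\[
W_{45}\,X\,\hat{W}_{14} \;=\; \hat{W}_{14}\,X\,W_{45}, \qquad X := Z_{12}^{\,*}W_{25}Z_{12}.
\]
Now Lemma \ref{lemmaW}, with legs relabeled $3\mapsto 4$, $4\mapsto 5$, gives the analogous identity $Z_{45}\,X\,\hat{W}_{14} = \hat{W}_{14}\,X\,Z_{45}$. So (1) reduces to showing that $W_{45}$ implements the same braiding of $X$ past $\hat{W}_{14}$ as $Z_{45}$ does. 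To establish this, I would use the explicit definition \eqref{(Zdef)} of $Z$ (which expresses $Z$ as $W$ times modular-conjugation factors), together with the pentagon $W_{24}W_{25}W_{45} = W_{45}W_{24}$ for $W$, which controls the difference between $W_{25}W_{45}$ and $W_{45}W_{25}$ exactly through a conjugation by $W_{24}$.

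For identity (2), the approach is parallel with the roles of $W$ and $\hat{W}$ swapped. The identity asserts that $Y := Z_{34}^{\,*}\hat{W}_{14}Z_{34}$ intertwines the two natural three-leg forms of the $\hat{W}$-coproduct on legs $(1,3,5)$, namely $(\hat{\Delta}\otimes\operatorname{id})(\hat{W}) = \hat{W}_{15}\hat{W}_{35}$ and its opposite $\hat{W}_{35}\hat{W}_{15}$. The controlling pentagon is $\hat{W}_{13}\hat{W}_{15}\hat{W}_{35} = \hat{W}_{35}\hat{W}_{13}$, and the analog of Lemma \ref{lemmaW} needed for the ``opposite'' orientation arises either by swapping $W\leftrightarrow\hat{W}$ in that proof or by repeating the same argument with $W_D$ in place of $W_m$.

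I anticipate the main obstacle to be largely combinatorial: five legs with several overlapping operators require careful bookkeeping, and since $Z$ does not naturally belong to $N\otimes\hat{N}$ (it involves the modular conjugations $J,\hat{J}$), we cannot simply apply $(\operatorname{id}\otimes\Delta)$ or $(\operatorname{id}\otimes\hat{\Delta})$ to split $Z_{12}$ or $Z_{34}$. If the direct route becomes unwieldy, a systematic fallback is to compute $(\widehat{\Delta_D}\otimes\operatorname{id})(W_m) = W_{m,13}W_{m,23}$ with one extra coproduct applied beyond what appeared in the proof of Lemma \ref{lemmaW}, and extract (1) and (2) by comparing the resulting expansions, exactly as was done there.
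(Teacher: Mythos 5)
Your reduction of (1) to the braiding relation $W_{45}X\hat{W}_{14}=\hat{W}_{14}XW_{45}$ with $X=Z_{12}^*W_{25}Z_{12}$, compared against the relabeled Lemma~\ref{lemmaW} identity $Z_{45}X\hat{W}_{14}=\hat{W}_{14}XZ_{45}$, is exactly the right first move and matches the paper, which likewise obtains (1) by feeding the factorization $Z=WT$, $T:=W^*Z=(\hat{J}J\otimes\hat{J}J)W^*(\hat{J}J\otimes J\hat{J})$, into Lemma~\ref{lemmaW}. But the step you leave open --- why $W_{45}$ braids $X$ past $\hat{W}_{14}$ the same way $Z_{45}$ does --- is not controlled by the pentagon equation, and invoking $W_{24}W_{25}W_{45}=W_{45}W_{24}$ here is a wrong turn: after your reduction the factor $W_{25}$ sits inside $X$ in the same position on both sides, so there is no $W_{25}W_{45}$ versus $W_{45}W_{25}$ discrepancy left to resolve. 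What actually closes the argument is a commutant observation about the residual factor $T$: since $\hat{J}J$ is a scalar multiple of $J\hat{J}$ and $W^*=(\hat{J}\otimes J)W(\hat{J}\otimes J)$, one finds that $T$ is a scalar multiple of $(J\otimes\hat{J})W(J\otimes\hat{J})$, hence $T\in M'\otimes\hat{M}'$. Therefore $T_{45}$ commutes with $\hat{W}_{14}$ (whose leg $4$ lies in $M$) and with $X$ (whose only component on leg $5$ comes from $W_{25}$ and lies in $\hat{M}$), so $Z_{45}X\hat{W}_{14}=W_{45}X\hat{W}_{14}T_{45}$ and $\hat{W}_{14}XZ_{45}=\hat{W}_{14}XW_{45}T_{45}$; cancelling the unitary $T_{45}$ gives (1). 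Without this commutant fact the substitution $Z=WT$ leaves an uncancelled factor and your plan does not close --- this is the one genuinely missing idea.

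For (2), your proposal to establish a $W\leftrightarrow\hat{W}$-swapped analogue of Lemma~\ref{lemmaW} from the pentagon of $W_D$ is more machinery than is needed: in the paper, (2) is a purely formal consequence of (1). One substitutes $W_{45}=\hat{W}_{54}^*$ and $W_{25}=\hat{W}_{52}^*$ into (1), multiplies both sides on the left by $Z_{12}^*\hat{W}_{52}\hat{W}_{54}Z_{12}$ and on the right by $Z_{12}^*\hat{W}_{54}\hat{W}_{52}Z_{12}$, uses that $\hat{W}_{54}$ commutes with $Z_{12}$, and renumbers the legs $1,2,4,5\mapsto 3,4,5,1$; no new coproduct computation or six-leg identity is required. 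I would recommend replacing your independent derivation of (2), and your fallback of applying an extra coproduct to $W_m$, with this two-line deduction from (1).
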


\begin{proof}
Recall from Lemma~\ref{lemmaW} that $Z_{34}Z_{12}^*W_{24}Z_{12}\hat{W}_{13}
=\hat{W}_{13}Z_{12}^*W_{24}Z_{12}Z_{34}$ or $Z_{12}^*Z_{34}W_{24}Z_{12}
\hat{W}_{13}=\hat{W}_{13}Z_{12}^*W_{24}Z_{34}Z_{12}$.  Recall now the definition 
of the operator $Z$ given in equation~\eqref{(Zdef)}, and write: $Z=WT$, where 
$T=(\hat{J}J\otimes\hat{J}J)W^*(\hat{J}J\otimes J\hat{J})$.  Then our equation 
above becomes (by writing $Z_{34}=W_{34}T_{34}$):
$$
Z_{12}^*W_{34}T_{34}W_{24}Z_{12}\hat{W}_{13}
=\hat{W}_{13}Z_{12}^*W_{24}W_{34}T_{34}Z_{12}.
$$
We can cancel out $T_{34}$ from both sides, because $T_{34}$ actually commutes 
with all the operators in the equation.  To see this, note that $\hat{J}J
=kJ\hat{J}$, for a constant $k$ (actually $k=\nu^{1/4}$, where $\nu>0$ is 
the ``scaling constant'' \cite{KuVa}, \cite{KuVavN}).  So we can write:
$$
T=k(J\otimes\hat{J})(\hat{J}\otimes J)W^*(\hat{J}\otimes J)(J\otimes\hat{J}).
$$
Since $W^*\in M\otimes\hat{M}$, we can see that $T\in M'\otimes\hat{M}'$.

So far we have: $Z_{12}^*W_{34}W_{24}Z_{12}\hat{W}_{13}=\hat{W}_{13}Z_{12}^*
W_{24}W_{34}Z_{12}$.  By re-numbering the legs (letting 3,4 to become 4,5), 
we obtain (1):
$$
Z_{12}^*W_{45}W_{25}Z_{12}\hat{W}_{14}=\hat{W}_{14}Z_{12}^*W_{25}W_{45}Z_{12}.
$$

Next, we re-write (1), using $\hat{W}=\Sigma W^*\Sigma$.  Then we have:
$$
Z_{12}^*\hat{W}_{54}^*\hat{W}_{52}^*Z_{12}\hat{W}_{14}
=\hat{W}_{14}Z_{12}^*\hat{W}_{52}^*\hat{W}_{54}^*Z_{12}.
$$
Apply $Z_{12}^*\hat{W}_{52}\hat{W}_{54}Z_{12}[\,\cdots\,]Z_{12}^*\hat{W}_{54}
\hat{W}_{52}Z_{12}$ to both sides.  Then:
$$
\hat{W}_{14}Z_{12}^*\hat{W}_{54}\hat{W}_{52}Z_{12}
=Z_{12}^*\hat{W}_{52}\hat{W}_{54}Z_{12}\hat{W}_{14},
$$
which is same as: $\hat{W}_{14}\hat{W}_{54}Z_{12}^*\hat{W}_{52}Z_{12}
=Z_{12}^*\hat{W}_{52}Z_{12}\hat{W}_{54}\hat{W}_{14}$.  Here, we re-number the legs
(letting 1,2,4,5 to become 3,4,5,1), and obtain (2):
$$
\hat{W}_{35}\hat{W}_{15}Z_{34}^*\hat{W}_{14}Z_{34}
=Z_{34}^*\hat{W}_{14}Z_{34}\hat{W}_{15}\hat{W}_{35}.
$$
\end{proof}

Lemma~\ref{lemmaR} above will be helpful in our proof of the next proposition,
which gives the description of our ``quantum $R$-matrix'' type operator
${\mathcal R}$.

\begin{prop}\label{rmatrix}
Let ${\mathcal R}\in{\mathcal B}\bigl(({\mathcal H}\otimes{\mathcal H})\otimes
({\mathcal H}\otimes{\mathcal H})\bigr)$ be the operator defined by ${\mathcal R}
=Z_{34}^*\hat{W}_{14}Z_{34}$.  The following properties hold:
\begin{enumerate}
  \item ${\mathcal R}\in M(A_D\otimes A_D)\,\subseteq N_D\otimes N_D$ and 
${\mathcal R}$ is unitary: ${\mathcal R}^{-1}={\mathcal R}^*$.
  \item We have: $(\Delta_D\otimes\operatorname{id})({\mathcal R})
={\mathcal R}_{13}{\mathcal R}_{23}$ and $(\operatorname{id}\otimes\Delta_D)
({\mathcal R})={\mathcal R}_{13}{\mathcal R}_{12}$.
  \item For any $x\in A_D$, we have: ${\mathcal R}\bigl(\Delta_D(x)\bigr)
{\mathcal R}^*=\Delta_D^{\operatorname{cop}}(x)$.
  \item The operator ${\mathcal R}$ satisfies the ``quantum Yang-Baxter equation 
(QYBE)'': Namely, ${\mathcal R}_{12}{\mathcal R}_{13}{\mathcal R}_{23}
={\mathcal R}_{23}{\mathcal R}_{13}{\mathcal R}_{12}$.
\end{enumerate}
\end{prop}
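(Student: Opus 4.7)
The plan is to handle the four assertions in order, with (3) as the main obstacle and (4) deduced from (2) and (3) by a standard manipulation. Throughout, I will use the identification
\[
{\mathcal R} = Z_{34}^* \hat W_{14} Z_{34} = (\pi' \otimes \pi)(\hat W),
\]
which holds because $\pi'(k) = k \otimes 1$ places the $\hat N$-leg of $\hat W$ on Hilbert leg~$1$ while $\pi(f) = Z^*(1 \otimes f) Z$ places its $N$-leg on legs $(3,4)$. Part~(1) then follows at once: $\pi, \pi'$ are non-degenerate $C^*$-homomorphisms into $A_D$ by Proposition~\ref{A_Dalg}, so they extend to the multiplier algebras, and since $\hat W$ is a unitary in $M(\hat A \otimes A)$, ${\mathcal R}$ is a unitary in $M(A_D \otimes A_D)$.

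For~(2), I combine the intertwining relations $\Delta_D \circ \pi' = (\pi' \otimes \pi') \circ \hat\Delta$ and $\Delta_D \circ \pi = (\pi \otimes \pi) \circ \Delta$ (from the remark after Proposition~\ref{A_Dcoalg}) with the multiplicative-unitary identities
\[
(\hat\Delta \otimes \operatorname{id})(\hat W) = \hat W_{13}\hat W_{23}, \qquad (\operatorname{id} \otimes \Delta)(\hat W) = \hat W_{13}\hat W_{12},
\]
the second of which holds because $\Delta$ is the comultiplication of the dual of $(\hat N, \hat\Delta)$. Applying the homomorphism $\pi' \otimes \pi' \otimes \pi$ to the first identity and $\pi' \otimes \pi \otimes \pi$ to the second yields $(\Delta_D \otimes \operatorname{id})({\mathcal R}) = {\mathcal R}_{13}{\mathcal R}_{23}$ and $(\operatorname{id} \otimes \Delta_D)({\mathcal R}) = {\mathcal R}_{13}{\mathcal R}_{12}$, once one checks that the resulting six-Hilbert-leg expressions match the intended leg-numberings of the ${\mathcal R}_{ij}$.

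Part~(3) is the main technical hurdle. Both sides of $\operatorname{Ad}({\mathcal R}) \circ \Delta_D = \Delta_D^{\operatorname{cop}}$ are ${}^*$-homomorphisms, so it suffices to verify the identity on the generators $x = \pi'(k)$ and $x = \pi(f)$ of $A_D$. For $x = \pi'(k)$, Proposition~\ref{A_Dcoalg} lets me realize $\Delta_D(\pi'(k)) = (\pi'\otimes\pi')(\hat\Delta(k))$ as $\hat W^*_{13}\,(1\otimes 1\otimes k\otimes 1)\,\hat W_{13}$ on four Hilbert legs, and similarly $\Delta_D^{\operatorname{cop}}(\pi'(k)) = \hat W^*_{31}\,(k\otimes 1\otimes 1\otimes 1)\,\hat W_{31}$. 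Substituting ${\mathcal R} = Z_{34}^*\hat W_{14}Z_{34}$ into ${\mathcal R}\,\Delta_D(\pi'(k))\,{\mathcal R}^*$ and cancelling $Z$-factors supported on disjoint legs, the required identity reduces precisely to Lemma~\ref{lemmaR}(2) after a leg relabelling. Symmetrically, the case $x = \pi(f)$ brings in copies of $W$ through $(\pi\otimes\pi)(\Delta f)$ and reduces to Lemma~\ref{lemmaR}(1). The main difficulty lies in careful bookkeeping across six Hilbert legs; disjointness of leg supports, which makes most $Z$- and $W$-factors commute past one another, is what makes the reduction possible.

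Finally,~(4) follows from~(2) and~(3) by the classical Drinfeld argument. Tensoring~(3) with the identity on a second factor (and extending by non-degeneracy) shows that for any $y \in M(A_D \otimes A_D)$,
\[
{\mathcal R}_{12}\,(\Delta_D \otimes \operatorname{id})(y) = (\Delta_D^{\operatorname{cop}} \otimes \operatorname{id})(y)\,{\mathcal R}_{12}.
\]
Taking $y = {\mathcal R}$: the left side equals ${\mathcal R}_{12}{\mathcal R}_{13}{\mathcal R}_{23}$ by~(2), while applying the flip on the first two tensor slots to the first identity of~(2) gives $(\Delta_D^{\operatorname{cop}} \otimes \operatorname{id})({\mathcal R}) = {\mathcal R}_{23}{\mathcal R}_{13}$, so the right side equals ${\mathcal R}_{23}{\mathcal R}_{13}{\mathcal R}_{12}$. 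This is the quantum Yang--Baxter equation.
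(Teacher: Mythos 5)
Your proposal follows essentially the same route as the paper: the identification ${\mathcal R}=(\pi'\otimes\pi)(\hat W)$ for (1), the coalgebra-morphism property of $\pi,\pi'$ together with the pentagon identities for (2), verification on the generators $\pi'(k)$ and $\pi(f)$ with reduction to Lemma~\ref{lemmaR}(2) and (1) respectively for (3), and Drinfeld's standard argument for (4). The only understatement is in (3): the paper's reduction to Lemma~\ref{lemmaR} is not merely a matter of commuting disjointly supported $Z$- and $W$-factors — one must take the generator itself to be a slice $(\operatorname{id}\otimes\omega)(\hat W)$ (resp.\ of $W$), introducing a fifth leg, and apply the pentagon equation twice to bring $\Delta_D$ and $\Delta_D^{\operatorname{cop}}$ of that generator into the exact form appearing in the lemma.
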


\begin{proof}
Here $M(B)$ denotes the multiplier algebra of a $C^*$-algebra $B$.

(1) Recall that $\hat{W}\in\hat{N}\otimes N$.  Therefore, by naturally extending 
the ${}^*$-homomorphisms $\pi$ and $\pi'$ defined in Proposition~\ref{A_Dalg}, 
we can see that ${\mathcal R}=(\pi'\otimes\pi)(\hat{W})\in N_D\otimes N_D$. 
Actually, noting that $\hat{W}\in M(\hat{A}\otimes A)$, we also see that 
${\mathcal R}\in M(A_D\otimes A_D)$.  Meanwhile, from the definitions of the 
operators involved, it is clear that ${\mathcal R}$ is unitary.

(2) Since ${\mathcal R}=(\pi'\otimes\pi)(\hat{W})$, we have:
\begin{align}
(\Delta_D\otimes\operatorname{id})({\mathcal R})&=(\Delta_D\otimes\operatorname{id})
\bigl((\pi'\otimes\pi)(\hat{W})\bigr)=(\pi'\otimes\pi'\otimes\pi)\bigl((\hat{\Delta}
\otimes\operatorname{id})(\hat{W})\bigr)  \notag \\
&=(\pi'\otimes\pi'\otimes\pi)(\hat{W}_{12}^*\hat{W}_{23}\hat{W}_{12})
=(\pi'\otimes\pi'\otimes\pi)(\hat{W}_{13}\hat{W}_{23})   \notag \\
&=\bigl[(\pi'\otimes\pi'\otimes\pi)(\hat{W}_{13})\bigr]
\bigl[(\pi'\otimes\pi'\otimes\pi)(\hat{W}_{23})\bigr]
={\mathcal R}_{13}{\mathcal R}_{23}.
\notag
\end{align}
The second equality is due to $\Delta_D\circ\pi'=(\pi'\otimes\pi')\circ\hat{\Delta}$
(see Proposition~\ref{A_Dcoalg}).  Since $\hat{W}\in M(\hat{A}\otimes A)$, the 
third equality follows from the definition of $\hat{\Delta}$.  The fourth equality 
is the pentagon equation for $\hat{W}$ (being multiplicative).  The fifth equality 
is just using the fact that $\pi'$ and $\pi$ are $C^*$-homomorphisms.

The proof for $(\operatorname{id}\otimes\Delta_D)({\mathcal R})={\mathcal R}_{13}
{\mathcal R}_{12}$ can be done in a similar way.  Just use the fact that for 
$a\in A$, we have: $\Delta a=W^*(1\otimes a)W=\hat{W}_{21}(1\otimes a)\hat{W}_{21}^*$,
and that $\Delta_D\circ\pi=(\pi\otimes\pi)\circ\Delta$.

(3) Recall from Section 2 that $(\operatorname{id}\otimes\omega)(\hat{W})\in\hat{A}$,
and $(\operatorname{id}\otimes\omega')(W)\in A$, for $\omega,\omega'\in{\mathcal B}
({\mathcal H})_*$, and that these operators generate $\hat{A}$ and $A$, respectively.
So consider $b=(\operatorname{id}\otimes\omega)(\hat{W})\in\hat{A}$ and compute.  Then:
\begin{align}
{\mathcal R}\bigl[\Delta_D\bigl(\pi'(b)\bigr)\bigr]&={\mathcal R}\bigl[(\pi'\otimes\pi')
(\hat{\Delta}b)\bigr]=(Z_{34}^*\hat{W}_{14}Z_{34})\bigl[\hat{W}_{13}^*(1\otimes1\otimes
b\otimes1)\hat{W}_{13}\bigr]   \notag \\
&=(\operatorname{id}\otimes\operatorname{id}\otimes\operatorname{id}\otimes\operatorname{id}
\otimes\omega)(Z_{34}^*\hat{W}_{14}Z_{34}\hat{W}_{13}^*\hat{W}_{35}\hat{W}_{13})
\notag \\
&=(\operatorname{id}\otimes\operatorname{id}\otimes\operatorname{id}\otimes\operatorname{id}
\otimes\omega)(Z_{34}^*\hat{W}_{14}Z_{34}\hat{W}_{15}\hat{W}_{35})   \notag \\
&=(\operatorname{id}\otimes\operatorname{id}\otimes\operatorname{id}\otimes\operatorname{id}
\otimes\omega)(\hat{W}_{35}\hat{W}_{15}Z_{34}^*\hat{W}_{14}Z_{34})  \notag \\
&=(\operatorname{id}\otimes\operatorname{id}\otimes\operatorname{id}\otimes\operatorname{id}
\otimes\omega)(\hat{W}_{31}^*\hat{W}_{15}\hat{W}_{31}Z_{34}^*\hat{W}_{14}Z_{34})  \notag \\
&=\bigl[\hat{\Delta}^{\operatorname{cop}}(b)\bigr]_{13}(Z_{34}^*\hat{W}_{14}Z_{34}) \notag \\
&=\bigl[(\pi'\otimes\pi')\bigl(\hat{\Delta}^{\operatorname{cop}}(b)\bigr)\bigr]{\mathcal R}
=\bigl[\Delta_D^{\operatorname{cop}}\bigl(\pi'(b)\bigr)\bigr]{\mathcal R}.
\notag 
\end{align}
The fourth and sixth equalities follow from the multiplicativity of $\hat{W}$, while the 
fifth equality is using Lemma~\ref{lemmaR} (2).  In the seventh equality, we used the 
fact that $\hat{\Delta}^{\operatorname{cop}}(b)=W(b\otimes 1)W^*=\hat{W}_{21}^*(b\otimes1)
\hat{W}_{21}$.

Next, consider $a=(\operatorname{id}\otimes\omega')(W)\in A$ and compute.  Then:
\begin{align}
{\mathcal R}\bigl[\Delta_D\bigl(\pi(a)\bigr)\bigr]
&=(Z_{34}^*\hat{W}_{14}Z_{34})\bigl[Z_{12}^*Z_{34}^*W_{24}^*(1\otimes1\otimes1\otimes a)W_{24}
Z_{12}Z_{34}\bigr]  \notag \\
&=(\operatorname{id}\otimes\operatorname{id}\otimes\operatorname{id}\otimes\operatorname{id}
\otimes\omega)(Z_{34}^*\hat{W}_{14}Z_{12}^*W_{24}^*W_{45}W_{24}Z_{12}Z_{34})
\notag \\
&=(\operatorname{id}\otimes\operatorname{id}\otimes\operatorname{id}\otimes\operatorname{id}
\otimes\omega)(Z_{34}^*\hat{W}_{14}Z_{12}^*W_{25}W_{45}Z_{12}Z_{34})   \notag \\
&=(\operatorname{id}\otimes\operatorname{id}\otimes\operatorname{id}\otimes\operatorname{id}
\otimes\omega)(Z_{34}^*Z_{12}^*W_{45}W_{25}Z_{12}\hat{W}_{14}Z_{34})  \notag \\
&=(\operatorname{id}\otimes\operatorname{id}\otimes\operatorname{id}\otimes\operatorname{id}
\otimes\omega)(Z_{34}^*Z_{12}^*W_{42}^*W_{25}W_{42}Z_{12}Z_{34}Z_{34}^*\hat{W}_{14}Z_{34})  
\notag \\
&=Z_{34}^*Z_{12}^*\bigl[\hat{W}_{24}(1\otimes a\otimes1\otimes1)\hat{W}_{24}^*\bigr]
Z_{12}Z_{34}(Z_{34}^*\hat{W}_{14}Z_{34})  \notag \\
&=\bigl[(\pi\otimes\pi)\bigl(\Delta^{\operatorname{cop}}(a)\bigr)\bigr]{\mathcal R}
=\bigl[\Delta_D^{\operatorname{cop}}\bigl(\pi(a)\bigr)\bigr]{\mathcal R}.
\notag 
\end{align}
This is essentially the same computation as the previous one.  The pentagon equation 
for $W$ is used in the fourth and sixth equalities.  The fifth equality is using 
Lemma~\ref{lemmaR} (1).  In the seventh and eighth equalities, note $\hat{W}=\Sigma
W^*\Sigma$ and also note $\Delta^{\operatorname{cop}}(a)=\Sigma W^*(1\otimes a)W\Sigma
=\hat{W}(a\otimes1)\hat{W}^*$.

Since it has been observed that $A_D$ is generated by the operators $\pi'(b)\pi(a)$, 
we conclude from the two results above (as well as the unitarity of ${\mathcal R}$) 
that: ${\mathcal R}\bigl[\Delta_D(x)\bigr]{\mathcal R}^*=\Delta_D^{\operatorname{cop}}(x)$, 
for any $x\in A_D$.

(4) The QYBE follows right away from (3) and (4).  In fact,
$$
{\mathcal R}_{12}{\mathcal R}_{13}{\mathcal R}_{23}={\mathcal R}_{12}\bigl[(\Delta_D
\otimes\operatorname{id})({\mathcal R})\bigr]=\bigl[(\Delta_D^{\operatorname{cop}}
\otimes\operatorname{id})({\mathcal R})\bigr]{\mathcal R}_{12}={\mathcal R}_{23}
{\mathcal R}_{13}{\mathcal R}_{12}.
$$
The first equality follows from (2); the second equality is from (3); and the third 
equality is from (2), with the legs 1 and 2 interchanged.
\end{proof}

As a quick consequence of Proposition~\ref{rmatrix}, we point out that ${\mathcal R}$ 
determines a certain ``left 2-cocycle'' (dual to the notion of same name in the Hopf 
algebra setting, introduced in Section~3 of \cite{Ludouble}).  While we do not need to 
give the definition of a 2-cocycle here, this means that we can {\em deform\/} (or twist) 
the comultiplication $\Delta_D$ by multiplying ${\mathcal R}$ from the left, and obtain 
a new map satisfying the coassociativity.  The result is given below:

\begin{prop}\label{RDeltacoassociativity}
Let ${}_{\mathcal R}\Delta:A_D\to M(A_D\otimes A_D)$ be defined by
$$
{}_{\mathcal R}\Delta(x):={\mathcal R}\Delta_D(x),\qquad {\text {for $x\in A_D$.}}
$$
Then ${}_{\mathcal R}\Delta$ satisfies the coassociativity: $({}_{\mathcal R}\Delta
\otimes\operatorname{id}){}_{\mathcal R}\Delta
=(\operatorname{id}\otimes{}_{\mathcal R}\Delta){}_{\mathcal R}\Delta$.
\end{prop}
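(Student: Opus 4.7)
The plan is to reduce the coassociativity of ${}_{\mathcal R}\Delta$ to three already-established facts: the two ``hexagon'' identities $(\Delta_D \otimes \operatorname{id})({\mathcal R}) = {\mathcal R}_{13}{\mathcal R}_{23}$ and $(\operatorname{id} \otimes \Delta_D)({\mathcal R}) = {\mathcal R}_{13}{\mathcal R}_{12}$ of Proposition~\ref{rmatrix}(2), the QYBE of Proposition~\ref{rmatrix}(4), and the coassociativity of $\Delta_D$ itself. This is the standard pattern for left $2$-cocycle twists; the only real task is to make it work at the operator-algebraic level, since ${}_{\mathcal R}\Delta$ is not a $*$-homomorphism.

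First I would pin down the meaning of $({}_{\mathcal R}\Delta \otimes \operatorname{id})$ as a linear map on $M(A_D \otimes A_D)$. On an elementary tensor $y_1 \otimes y_2$ it sends the element to ${\mathcal R}\Delta_D(y_1) \otimes y_2 = {\mathcal R}_{12} \cdot (\Delta_D \otimes \operatorname{id})(y_1 \otimes y_2)$, where ${\mathcal R}_{12}$ is ${\mathcal R}$ embedded in the first two legs. Extending by linearity (and strict continuity on the multiplier algebra), one obtains the factorizations
$$
({}_{\mathcal R}\Delta \otimes \operatorname{id})(y) = {\mathcal R}_{12} \cdot (\Delta_D \otimes \operatorname{id})(y),\qquad (\operatorname{id} \otimes {}_{\mathcal R}\Delta)(y) = {\mathcal R}_{23} \cdot (\operatorname{id} \otimes \Delta_D)(y).
$$
Because $\Delta_D \otimes \operatorname{id}$ and $\operatorname{id} \otimes \Delta_D$ are genuine $*$-homomorphisms, applying these formulas to ${}_{\mathcal R}\Delta(x) = {\mathcal R}\Delta_D(x)$ cleanly separates an ${\mathcal R}$-part from a $\Delta_D$-part:
\begin{align}
({}_{\mathcal R}\Delta \otimes \operatorname{id}){}_{\mathcal R}\Delta(x) &= {\mathcal R}_{12}\,(\Delta_D \otimes \operatorname{id})({\mathcal R})\,(\Delta_D \otimes \operatorname{id})\Delta_D(x) \notag \\
&= {\mathcal R}_{12}{\mathcal R}_{13}{\mathcal R}_{23}\,(\Delta_D \otimes \operatorname{id})\Delta_D(x), \notag
\end{align}
while the mirror computation yields
$$
(\operatorname{id} \otimes {}_{\mathcal R}\Delta){}_{\mathcal R}\Delta(x) = {\mathcal R}_{23}{\mathcal R}_{13}{\mathcal R}_{12}\,(\operatorname{id} \otimes \Delta_D)\Delta_D(x).
$$
The QYBE equates the leading triple products of ${\mathcal R}$'s, and the coassociativity of $\Delta_D$ equates the trailing expressions, which closes the argument.

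The only genuinely delicate step is the interpretive one: verifying that $({}_{\mathcal R}\Delta \otimes \operatorname{id})$, built from the non-homomorphism ${}_{\mathcal R}\Delta$, really factors as left multiplication by ${\mathcal R}_{12}$ composed with $\Delta_D \otimes \operatorname{id}$ on all of $M(A_D \otimes A_D)$. Since ${\mathcal R}_{12}$ is a unitary multiplier by Proposition~\ref{rmatrix}(1) and left multiplication by a unitary is strictly continuous, I expect no obstruction here; once this is granted, the proof reduces to the purely algebraic combination of the hexagon identities, the QYBE, and the coassociativity of $\Delta_D$, all already in hand.
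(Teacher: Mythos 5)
Your proof is correct and follows essentially the same route as the paper's: both factor $({}_{\mathcal R}\Delta\otimes\operatorname{id}){}_{\mathcal R}\Delta(x)$ as ${\mathcal R}_{12}{\mathcal R}_{13}{\mathcal R}_{23}(\Delta_D\otimes\operatorname{id})\Delta_D(x)$ using Proposition~\ref{rmatrix}(2) and the fact that $\Delta_D\otimes\operatorname{id}$ is a homomorphism, then invoke the QYBE and coassociativity of $\Delta_D$ to match the mirror expression. Your extra care in justifying the factorization of $({}_{\mathcal R}\Delta\otimes\operatorname{id})$ on the multiplier algebra is a point the paper passes over silently, but it is the same argument.
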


\begin{proof}
The definition for ${}_{\mathcal R}\Delta$ makes sense, since ${\mathcal R}
\in M(A_D\otimes A_D)$.  Now for any $x\in A_D$, we have:
\begin{align}
({}_{\mathcal R}\Delta\otimes\operatorname{id}){}_{\mathcal R}\Delta(x)
&={\mathcal R}_{12}(\Delta_D\otimes\operatorname{id})\bigl({\mathcal R}
\Delta_D(x)\bigr)   \notag \\
&={\mathcal R}_{12}\bigl[(\Delta_D\otimes\operatorname{id})({\mathcal R})\bigr]
\bigl[(\Delta_D\otimes\operatorname{id})\bigl(\Delta_D(x)\bigr)\bigr]  \notag \\
&={\mathcal R}_{12}\bigl[{\mathcal R}_{13}{\mathcal R}_{23}\bigr]
\bigl[(\Delta_D\otimes\operatorname{id})\bigl(\Delta_D(x)\bigr)\bigr]  \notag \\
&={\mathcal R}_{23}\bigl[{\mathcal R}_{13}{\mathcal R}_{12}\bigr]
\bigl[(\operatorname{id}\otimes\Delta_D)\bigl(\Delta_D(x)\bigr)\bigr]   \notag \\
&={\mathcal R}_{23}\bigl[(\operatorname{id}\otimes\Delta_D)({\mathcal R})\bigr]
\bigl[(\operatorname{id}\otimes\Delta_D)\bigl(\Delta_D(x)\bigr)\bigr]   \notag \\
&={\mathcal R}_{23}(\operatorname{id}\otimes\Delta_D)\bigl({\mathcal R}
\Delta_D(x)\bigr)
=(\operatorname{id}\otimes{}_{\mathcal R}\Delta){}_{\mathcal R}\Delta(x).
\notag
\end{align}
In the second and sixth equalities, we used the fact that $\Delta_D$ is a 
$C^*$-homomorphism.  The third and fifth equalities used Proposition~\ref{rmatrix} (2). 
In the fourth equality, we used the QYBE and the coassociativity of $\Delta_D$.
\end{proof}

The coassociative map ${}_{\mathcal R}\Delta$ above is certainly a ``deformed 
$\Delta_D$''.  However, it should be noted that $(A_D,{}_{\mathcal R}\Delta)$ 
is not going to give us any valid quantum group.  For instance, it is impossible 
to define a suitable Haar weight.  And, ${}_{\mathcal R}\Delta$ is not even 
a ${}^*$-homomorphism.  On the other hand, considering that $\Delta_D$ is 
``dual'' to the algebra structure on $\widehat{A_D}$ (via $W_D$ and 
Proposition~\ref{pairing}), and since ${}_{\mathcal R}\Delta$ still carries 
a sort of a ``non-degeneracy'' (since $\Delta_D$ is a non-degenerate 
$C^*$-morphism and ${\mathcal R}$ is a unitary map), we may try to deform 
the algebra structure on $\widehat{A_D}$ by dualizing ${}_{\mathcal R}\Delta$. 
Formally, we wish to define on the vector space $\widehat{A_D}$ a new product 
$\times_{\mathcal R}$, given by
\begin{equation}\label{(deformedprod)}
\langle f\times_{\mathcal R}g\,|\,x\rangle
=\bigl\langle f\otimes g\,|\,{}_{\mathcal R}\Delta(x)\bigr\rangle,
\end{equation}
where $f,g\in\widehat{A_D}$ and $x\in A_D$.

The obvious trouble with this program is that $(A_D,{}_{\mathcal R}\Delta)$ is 
no longer a quantum group, which means that we do not have any multiplicative 
unitary operator that was essential in formulating the dual pairing in the case 
of locally compact quantum groups.  In the next two sections, we will try to 
make sense of the formal equation~\eqref{(deformedprod)}, and use it to construct 
a $C^*$-algebra (though not a quantum group) that can be considered as a 
``deformed $\widehat{A_D}$''.  Let us begin with the case of $A=C_{red}^*(G)$.

\section{The case of an ordinary group.  The Weyl algebra.}

For this section, let $G$ be an ordinary locally compact group, with a fixed 
left Haar measure $dx$.  Let $\nabla(x)$ denote the modular function.  Using 
the Haar measure, we can form the Hilbert space ${\mathcal H}=L^2(G)$.  We then 
construct two natural subalgebras, $N$ and $\hat{N}$ of ${\mathcal B}({\mathcal H})$, 
as follows.

First consider the von Neumann algebra $N={\mathcal L}(G)$, given by the left regular 
representation.  That is, for $a\in C_c(G)$, let $L_a\in{\mathcal B}({\mathcal H})$ 
be such that $L_a\xi(t)=\int a(z)\xi(z^{-1}t)\,dz$.  We take ${\mathcal L}(G)$ to be 
the $W^*$-closure of $L\bigl(C_c(G)\bigr)$.  Next consider $\hat{N}=L^{\infty}(G)$, 
where $b\in L^{\infty}(G)$ is viewed as the multiplication operator $\mu_b$ on 
${\mathcal H}=L^2(G)$, by $\mu_b\xi(t)=b(t)\xi(t)$.  These are well-known von 
Neumann algebras, and it is also known that we can give (mutually dual) quantum 
group structures on them.  We briefly review the results below.

Let $W\in{\mathcal B}({\mathcal H}\otimes{\mathcal H})={\mathcal B}\bigl(L^2(G\times G)
\bigr)$ be defined by $W\xi(s,t)=\xi(ts,t)$.  It is actually the dual (that is, 
$W=\Sigma W_G^*\Sigma$)  of the well-known multiplicative unitary operator $W_G$, 
defined by $W_G\xi(s,t)=\xi(s,s^{-1}t)$, and is therefore multiplicative \cite{BS}.
We can show without difficulty that
$$
N={\mathcal L}(G)=\overline{\bigl\{(\operatorname{id}\otimes\omega)(W):\omega
\in{\mathcal B}({\mathcal H})_*\bigr\}}^w,
$$
and the comultiplication on $N$ is given by $\Delta(x)=W^*(1\otimes x)W$, for 
$x\in N$.  For $a\in C_c(G)$, this reads: $(L\otimes L)_{\Delta a}\xi(s,t)
=\int a(z)\xi(z^{-1}s,z^{-1}t)\,dz$.  The antipode map $S:a\to S(a)$ is such that 
$\bigl(S(a)\bigr)(t)=\nabla(t^{-1})a(t^{-1})$, where $\nabla$ is the modular function.
The left Haar weight is given by $\varphi(a)=a(1)$, where $1=1_G$ is the group 
identity element.  In this way, we obtain a von Neumann algebraic quantum group 
$(N,\Delta)$, which is co-commutative .

Meanwhile, we can also show that:
$$
\hat{N}=L^{\infty}(G)=\overline{\bigl\{(\omega\otimes\operatorname{id})(W):\omega
\in{\mathcal B}({\mathcal H})_*\bigr\}}^w,
$$
and the comultiplication on $\hat{N}$ is given by $\hat{\Delta}(y)=\Sigma 
W(y\otimes1)W^*\Sigma$, for $y\in\hat{N}$.  In effect, this will give us 
$\hat{\Delta}b(s,t)=b(st)$, for $b\in L^{\infty}(G)$.  The antipode map 
$\hat{S}:b\to\hat{S}(b)$ is such that $\bigl(\hat{S}(b)\bigr)(t)=b(t^{-1})$, 
while the left Haar weight is just $\hat{\varphi}(b)=\int b(t)\,dt$.  In this 
way, $(\hat{N},\hat{\Delta})$ becomes a commutative von Neumann algebraic 
quantum group.

By considering the norm completions instead, we will have the $C^*$-algebraic 
quantum groups $A=C_{red}^*(G)$ and $\hat{A}=C_0(G)$.  Meanwhile, as in 
Proposition~\ref{pairing}, a dual pairing map can be considered at the 
level of certain dense subalgebras.  For convenience, let us consider 
$L\bigl(C_c(G)\bigr)\subseteq N$ and $\mu\bigl(C_c(G)\bigr)\subseteq\hat{N}$. 
The dual pairing defined by the multiplicative unitary operator $W$, as given 
in equation~\eqref{(pairing)} (or see Theorem~\ref{pairingthm}), becomes:
\begin{equation}\label{(Gpairing)}
\bigl\langle\mu_b\,|\,L_a\bigr\rangle=\int a(t)b(t^{-1})\,dt,
\end{equation}
for $\mu_b\in\mu\bigl(C_c(G)\bigr)$ and $L_a\in L\bigl(C_c(G)\bigr)$.
The proof is straightforward.

We now turn to find a more concrete description of the quantum double, $D(G)=A_D$, 
and its dual $\widehat{D(G)}$.  First, consider the operators $J$ and $\hat{J}$ on 
${\mathcal H}=L^2(G)$, which come from our knowledge of the involution and the 
antipode maps.
$$
J\xi(s)=\overline{\nabla(s^{-1})\xi(s^{-1})}, \qquad \hat{J}\xi(s)=\overline{\xi(s)}.
$$
Following the definitions given in Section~3, given by equations~\eqref{(Zdef)}, 
\eqref{(Wmdef)}, \eqref{(WDdef)}, construct the operator $Z\in{\mathcal B}
\bigl(L^2(G\times G)\bigr)$, as well as $W_m$ and $W_D$, which act on 
$L^2(G\times G\times G\times G)$.  We have:
\begin{align}
Z\xi(s,t)&=W(\hat{J}J\otimes\hat{J}J)W^*(\hat{J}J\otimes J\hat{J})\xi(s,t)
=\nabla(t^{-1})\xi(tst^{-1},t).   \notag \\
W_m\xi(s,t,s',t')&=W_{13}Z_{34}^*\hat{W}_{24}Z_{34}\xi(s,t,s',t')
=\nabla(t)\xi(s's,t,t^{-1}s't,t^{-1}t').    \notag \\
W_D\xi(s,t,s',t')&=\Sigma_{13}\Sigma_{24}W_m^*\Sigma_{24}\Sigma_{13}
\xi(s,t,s',t')    \notag \\
&=\nabla(t'^{-1})\xi(t'st'^{-1},t't,t's^{-1}t'^{-1}s',t').
\notag
\end{align}
Next, by using the results of Propositions~\ref{dualqdouble}, \ref{A_Dalg}, 
\ref{A_Dcoalg}, we can give below the descriptions for the quantum double 
and its dual:

\begin{prop}\label{D(G)}
Let $A=C_{red}^*(G)$ and $\hat{A}=C_0(G)$ be the (mutually dual) quantum groups 
associated with $G$, equipped with their natural structure maps described above.
Then:
\begin{enumerate}
  \item As a $C^*$-algebra, we have: 
$$
D(G)=\overline{\bigl\{\Pi(\mu_k\otimes L_f):f,k\in C_c(G)\bigr\}}^{\|\ \|}
\cong C_0(G)\rtimes_{\alpha}G,
$$
where $\alpha$ is the  conjugation action.
  \item The comultiplication on $D(G)$ is given by
$$
\Delta_D\bigl(\Pi(\mu_k\otimes L_f)\bigr)=\bigl[(\pi'\otimes\pi')
(\hat{\Delta}(\mu_k))\bigr]\bigl[(\pi\otimes\pi)(\Delta(L_f))\bigr].
$$
  \item As a $C^*$-algebra, we have: $\widehat{D(G)}=A\otimes\hat{A}
=C_{red}^*(G)\otimes C_0(G)$.
  \item The comultiplication on $\widehat{D(G)}$ is given by
$$
\widehat{\Delta_D}=(\operatorname{id}\otimes\sigma\circ m\otimes\operatorname{id})
(\Delta\otimes\hat{\Delta}),
$$
where $m(z)=ZzZ^*$, for $z\in M(A\otimes\hat{A})$.
\end{enumerate}
\end{prop}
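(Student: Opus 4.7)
The plan is to specialize the abstract results of Section~3 to the classical group setting. Parts (2), (3), and (4) will follow almost directly from Propositions \ref{A_Dcoalg} and \ref{dualqdouble}, so essentially all the real work is in part (1).

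For part (1), I would first realize the two families of generators concretely as operators on $L^2(G \times G)$. The operator $\pi'(\mu_k) = \mu_k \otimes 1$ is just multiplication by $k$ on the first variable: $\pi'(\mu_k)\xi(s,t) = k(s)\xi(s,t)$. For $\pi(L_f) = Z^*(1 \otimes L_f)Z$, plugging in the formulas for $L_f$ and for $Z$ and composing should yield an expression of roughly the form
\begin{equation*}
\pi(L_f)\xi(s,t) = \int_G f(z)\,\xi(z^{-1}sz,\,z^{-1}t)\,dz
\end{equation*}
(with an appropriate modular factor that I would track carefully). From this the key covariance relation is immediate: unpacking $\pi(L_f) = \int f(z)U_z\,dz$ at the level of the ``elementary'' unitaries $U_z$, one has $U_z\pi'(\mu_k)U_z^* = \pi'(\mu_{\alpha_z(k)})$, where $\alpha_z(k)(s) = k(z^{-1}sz)$ is precisely the conjugation action. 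Combined with the remark after Proposition \ref{A_Dalg} that $D(G) = A_D$ is the norm closure of the span of the $\Pi(\mu_k \otimes L_f) = \pi'(\mu_k)\pi(L_f)$, this exhibits a covariant representation of $(C_0(G), G, \alpha)$ on $L^2(G\times G)$ whose integrated form has image exactly $D(G)$. Since this is the standard regular representation of the reduced crossed product, I conclude $D(G) \cong C_0(G)\rtimes_\alpha G$.

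Part (2) is a verbatim specialization of Proposition \ref{A_Dcoalg} to $k = \mu_k$ and $f = L_f$. Part (4) is similarly immediate from the description of $\widehat{\Delta_D}$ in Proposition \ref{dualqdouble}, noting that in our cocommutative setting $\Delta_1^{\operatorname{cop}} = \Delta$ and $\Delta_2 = \hat{\Delta}$. For part (3), Proposition \ref{dualqdouble} already gives $\widehat{N_D} = N\otimes\hat{N}$ at the von Neumann level; to promote this to the $C^*$-algebraic equality $\widehat{A_D} = C_{red}^*(G)\otimes C_0(G)$ one needs regularity of $W_D$ (cf.\ Section~9 of \cite{BjVa}), which is available here because the underlying $W$ and $\hat{W}$ are regular in the classical group case.

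The main obstacle I expect is the bookkeeping inside part (1): tracing the modular function $\nabla$ through the composition $Z^*(1\otimes L_f)Z$ and verifying that no extra cocycle twist contaminates the covariance relation for $\alpha$. Everything else reduces to cleanly quoting the general theory developed in Sections~2 and~3.
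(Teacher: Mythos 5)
Your proposal is correct and follows essentially the same route as the paper: the paper likewise computes $\Pi(\mu_k\otimes L_f)\xi(s,t)=\int\nabla(z)k(s)f(z)\xi(z^{-1}sz,z^{-1}t)\,dz$, reads off the conjugation action to identify $D(G)$ with $C_0(G)\rtimes_\alpha G$, invokes regularity for part (3), and quotes Propositions \ref{A_Dcoalg} and \ref{dualqdouble} for parts (2) and (4). The only difference is that you spell out the covariance relation that the paper leaves as "without much difficulty," and you correctly flag the modular factor $\nabla(z)$ that appears in the integrated form.
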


\begin{proof}
Recall equation~\eqref{Pi} for the definition of $\Pi$, given in terms of the 
${}^*$-homomorphisms $\pi'$ and $\pi$ from Proposition~\ref{A_Dalg}.  
For (1), note that:
\begin{align}
\Pi(\mu_k\otimes L_f)\xi(s,t)&=\pi'(\mu_k)\pi(L_f)\xi(s,t)
=(\mu_k\otimes1)Z^*(1\otimes L_f)Z\xi(s,t)  \notag \\
&=\int\nabla(z)k(s)f(z)\xi(z^{-1}sz,z^{-1}t)\,dz.\label{PiD(G)}
\end{align}
If we write $\alpha_z\xi(s)=\xi(z^{-1}sz)$, $z\in G$, as the conjugation action, 
we can see without much difficulty from above that the $C^*$-algebra $D(G)$, 
which is generated by the operators $\Pi(\mu_k\otimes L_f)$, is isomorphic 
to the crossed product algebra $C_0(G)\rtimes_{\alpha}G$. [See any standard 
textbook on $C^*$-algebras, which contains discussion on crossed products.] 
By Proposition~\ref{A_Dcoalg}, we also know that the comultiplication on $D(G)$ 
is given as in (2).

In our case, being ``regular'', we do have: $\widehat{D(G)}=A\otimes\hat{A}$.
At the level of the functions in $C_c(G\times G)$, the multiplication on 
$\widehat{D(G)}=C_{red}^*(G)\otimes C_0(G)$ noted in (3) is reflected as follows:
\begin{equation}\label{(undeformedprod)}
\bigl[(a\otimes b)\times(a'\otimes b')\bigr](s,t)=\int a(z)b(t)a'(z^{-1}s)b'(t)\,dsdt.
\end{equation}
The description given in (4) of the comultiplcation $\widehat{\Delta_D}$ follows 
from Proposition~\ref{dualqdouble}.
\end{proof}

The next proposition describes the dual pairing map.  We may use equation~\eqref
{(pairing)}, but we instead give our proof using Theorem~\ref{pairingthm}.

\begin{prop}\label{pairingD(G)}
The dual pairing map is defined between the (dense) subalgebras $(L\otimes\mu)
\bigl(C_c(G\times G)\bigr)\subseteq\widehat{D(G)}$ and $\Pi\bigl((\mu\otimes L)
\bigl(C_c(G\times G)\bigr)\bigr)\subseteq D(G)$.  Applying Theorem~\ref{pairingthm}, 
we have:
\begin{align}
\bigl\langle L_a\otimes\mu_b\,|\,\Pi(\mu_k\otimes L_f)\bigr\rangle
&=(\varphi_D\otimes\widehat{\varphi_D})\bigl[(\Pi(\mu_k\otimes L_f)
\otimes1\otimes1)W_D^*(1\otimes1\otimes L_a\otimes\mu_b)\bigr]   \notag \\
&=\int\nabla(t)a(t^{-1}st)b(t^{-1})k(s)f(t)\,dsdt,
\notag
\end{align}
where $L_a,L_f\in L\bigl(C_c(G)\bigr)\subseteq A$ and $\mu_b,\mu_k
\in\mu\bigl(C_c(G)\bigr)\subseteq\hat{A}$.
\end{prop}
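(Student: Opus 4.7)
The first equality is an immediate application of Theorem~\ref{pairingthm}(3) to the mutually dual locally compact quantum group pair $(D(G),\widehat{D(G)})$. By Definition~\ref{qdouble}, the multiplicative unitary of $D(G)$ is $W_D$, and by Proposition~\ref{haarweight} the left Haar weights of $D(G)$ and $\widehat{D(G)}$ are $\varphi_D$ and $\widehat{\varphi_D}$, respectively. Substituting $a=\Pi(\mu_k\otimes L_f)$ and $b=L_a\otimes\mu_b$ into the last line of Theorem~\ref{pairingthm}(3) (with the obvious replacements $\varphi\to\varphi_D$, $\hat\varphi\to\widehat{\varphi_D}$, $W\to W_D$) immediately yields the first equality, modulo the routine verification that for $f,k,a,b\in C_c(G)$ the relevant elements lie in the dense subalgebras $D$ and $\hat D$ on which the formula is valid.

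For the second equality the plan is a direct, if somewhat involved, computation on $L^2(G^4)$. The ingredients are the integral-operator formula~\eqref{PiD(G)} for $\Pi(\mu_k\otimes L_f)$, the explicit formula for $W_D$ displayed earlier in this section (from which $W_D^*$ is obtained by inverting the change-of-variables rule), and the description of $L_a\otimes\mu_b$ as a convolution-multiplication operator on $L^2(G^2)$. Composing these three yields an explicit integral-kernel expression for
$(\Pi(\mu_k\otimes L_f)\otimes 1\otimes 1)\,W_D^*\,(1\otimes 1\otimes L_a\otimes\mu_b)$
as an operator on $L^2(G^4)$. One then applies $\varphi_D\otimes\widehat{\varphi_D}$ using the factorizations from Proposition~\ref{haarweight}:
$$
\varphi_D\bigl(\Pi(\mu_{k'}\otimes L_{f'})\bigr)=\hat\varphi(\mu_{k'})\,\varphi(L_{f'})=\bigl(\textstyle\int k'(s)\,ds\bigr)\,f'(1),
$$
together with $\widehat{\varphi_D}(L_{a'}\otimes\mu_{b'})=\varphi(L_{a'})\,\hat\psi(\mu_{b'})=a'(1)\int b'(t)\,\nabla(t^{-1})\,dt$, where the appearance of $\nabla$ reflects the fact that the right Haar weight $\hat\psi$ on the commutative quantum group $(L^\infty(G),\hat\Delta)$ differs from $\hat\varphi$ by the modular function. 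Since each weight is a combination of evaluation at the group identity and integration against (modular-)Haar measure, their application collapses the four $G$-variables down to just $s$ and $t$; a straightforward change of variables and collection of $\nabla$-factors then produces the stated right-hand side.

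The main obstacle is bookkeeping rather than conceptual difficulty. The modular function $\nabla$ enters in at least three independent places -- in the formula~\eqref{PiD(G)} for $\Pi$, in the action of $W_D$ on the fourth leg, and in the weight $\hat\psi$ -- and these contributions must be tracked separately if the final single factor $\nabla(t)$ is to emerge correctly. Similarly, the conjugation twist $s\mapsto t^{-1}st$ in the final integrand has a multi-source origin: part comes from the $Z^*(1\otimes f)Z$ twist built into $\pi$ (hence into $\Pi$), and part from the conjugation already present in the first leg of $W_D$. Arriving at the clean final formula is in effect a consistency check that these twists interact as they should -- which they must, since $(D(G),\widehat{D(G)})$ is a genuine dual pair of locally compact quantum groups.
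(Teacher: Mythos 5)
Your proposal is correct and takes essentially the same route as the paper: the first equality is Theorem~\ref{pairingthm}(3) applied to the dual pair $(D(G),\widehat{D(G)})$, and the second is exactly the direct kernel computation you describe, with the composed operator brought into the form $\bigl[\Pi\otimes(L\otimes\mu)\bigr](F)$ for an explicit $F\in C_c(G\times G\times G\times G)$ so that $\varphi_D\otimes\widehat{\varphi_D}$ (evaluation at the group identity plus integration, with $\hat{\psi}(\mu_b)=\int b(t^{-1})\,dt$) can be applied leg by leg. Your accounting of the separate sources of the modular factors and of the conjugation twist matches what happens in the paper's computation, which indeed collapses to the single factor $\nabla(t)$ after the changes of variables.
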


\begin{proof}
Recall from Proposition~\ref{haarweight} that the Haar weights $\varphi_D$ and 
$\widehat{\varphi_D}$ are given by
\begin{align}
&\varphi_D\bigl(\Pi(\mu_k\otimes L_f)\bigr)=\hat{\varphi}(\mu_k)\varphi(L_f)
=\int k(s)f(1)\,ds,  \notag \\
&\widehat{\varphi_D}(L_a\otimes\mu_b)=\varphi(L_a)\hat{\psi}(\mu_b)
=\varphi(L_a)\hat{\varphi}\bigl(\hat{S}(\mu_b)\bigr)
=\int a(1)b(t^{-1})\,dt.
\notag
\end{align}
Meanwhile, remembering the definitions of $\Pi$ and $W_D$, we have:
\begin{align}
&(\Pi(\mu_k\otimes L_f)\otimes1\otimes1)W_D^*(1\otimes1\otimes L_a\otimes\mu_b)
\xi(s,t,s',t')  \notag \\
&=\int\nabla(z)\nabla(t')k(s)f(z)a(z')b(t')\xi(t'^{-1}z^{-1}szt',t'^{-1}z^{-1}t,
z'^{-1}z^{-1}szs',t')\,dzdz'.
\notag
\end{align}
By change of variables (first $z'\mapsto z^{-1}szz'$, and then $z\mapsto zt'^{-1}$),
it becomes:
\begin{align}
\cdots&=\int\nabla(zt'^{-1})k(s)f(zt'^{-1})a(t'z^{-1}szt'^{-1}z')b(t')
\xi(z^{-1}sz,z^{-1}t,z'^{-1}s',t')\,dzdz'     \notag \\
&=\int\nabla(z)F(s,z,s',t')\xi(z^{-1}sz,z^{-1}t,z'^{-1}s',t')\,dzdz'
\notag \\
&=\bigl(\bigl[\Pi\otimes(L\otimes\mu)\bigr](F)\bigr)\xi(s,t,s',t'),
\notag
\end{align}
where $F(s,z;z',t')=\nabla(t'^{-1})k(s)f(zt'^{-1})a(t'z^{-1}szt'^{-1}z')b(t')\in C_c(G\times 
G\times G\times G)$.  Recall equation~\eqref{PiD(G)}.  Therefore,
\begin{align}
&\bigl\langle L_a\otimes\mu_b\,|\,\Pi(\mu_k\otimes L_f)\bigr\rangle
=(\varphi_D\otimes\widehat{\varphi_D})\bigl(\bigl[\Pi\otimes
(L\otimes\mu)\bigr](F)\bigr)    \notag \\
&=\int F(s,1,1,t^{-1})\,dsdt
=\int\nabla(t)k(s)f(t)a(t^{-1}st)b(t^{-1})\,dsdt.
\notag
\end{align}
\end{proof}

By Theorem~\ref{pairingthm}, we know that this is a valid dual pairing map (at the 
level of dense subalgebras) between $\widehat{D(G)}$ and $D(G)$, satisfying (1),(2),(3) 
of Proposition~\ref{pairing}.  In particular, the property~(1) implies that:
$$
\bigl\langle(L_a\otimes\mu_b)(L_{a'}\otimes\mu_{b'})\,|\,\Pi(\mu_k\otimes L_f)
\bigr\rangle=\bigl\langle(L_a\otimes\mu_b)\otimes(L_{a'}\otimes\mu_{b'})\,|\,\Delta_D
(\Pi(\mu_k\otimes L_f))\bigr\rangle,
$$
which relates the comultiplication $\Delta_D$ on $D(G)$ with the product on 
$\widehat{D(G)}$.

Even though we expressed our dual pairing as between certain subalgebras of 
$\widehat{D(G)}$ and $D(G)$, note that the pairing map is in effect being 
considered at the level of functions in $C_c(G\times G)$.  In that sense, we may 
write the pairing map given in Proposition~\ref{pairingD(G)} as:
\begin{equation}\label{(pairingC_c)}
\langle a\otimes b\,|\,k\otimes f\rangle
=\int\nabla(t)a(t^{-1}st)b(t^{-1})k(s)f(t)\,dsdt.
\end{equation}

Let us now consider the deformed comultiplication ${}_{\mathcal R}\Delta$ proposed 
in the previous section, and by using the dual pairing, try to ``deform'' the algebra 
$C^*(G)\otimes C_0(G)$.  Since the dual pairing is valid only at the level of functions, 
we will first work in the subspace $C_c(G\times G)$.  Formally, we wish to deform 
its product given in equation~\eqref{(undeformedprod)} to a new one, so that the 
new product is ``dual'' to ${}_{\mathcal R}\Delta$, as suggested by equation~\eqref
{(deformedprod)}.  In our case, we look for the ``deformed product'' $\times_{\mathcal R}$, 
satisfying (formally) the following:
$$
\bigl\langle[(a\otimes b)\times_{\mathcal R}(a'\otimes b')]\,|\,k\otimes f\bigr\rangle
=\bigl\langle(a\otimes b)\otimes(a'\otimes b')\,|\,{}_{\mathcal R}\Delta(k\otimes f)\bigr\rangle.
$$

To make some sense of this, we first need to regard ${}_{\mathcal R}\Delta(k\otimes f)$ 
as a (generalized) function on $G\times G$.  So consider $k,f\in C_c(G)$, and consider 
$\Pi(\mu_k\otimes L_f)\in D(G)$.  By definition, and by remembering that ${\mathcal R}
=Z_{34}^*\hat{W}_{14}Z_{34}$, we have:
\begin{align}
&{}_{\mathcal R}\Delta\bigl(\Pi(\mu_k\otimes L_f)\bigr)\xi(s,t,s',t')
={\mathcal R}\Delta_D\bigl(\Pi(\mu_k\otimes L_f)\bigr)\xi(s,t,s',t')       \notag \\
&=\nabla(s)W_D^*\bigl(1\otimes1\otimes\Pi(\mu_k\otimes L_f)\bigr)W_D
\xi(s,t,s^{-1}s's,s^{-1}t')       \notag \\
&=\int\nabla(s)\nabla(z)\nabla(z)k(s's)f(z)\xi(z^{-1}sz,z^{-1}t,z^{-1}s^{-1}s'sz,z^{-1}s^{-1}t')\,dz.
\notag
\end{align}
Remembering the definition of $\Pi$ as given in equation~\eqref{PiD(G)}, we write it 
as:
\begin{align}
\cdots&=\int\nabla(z)\nabla(z')F(s,z;s',z')\xi(z^{-1}sz,z^{-1}t;z'^{-1}s'z',z'^{-1}t')\,dzdz'  
\notag \\
&=\bigl[(\Pi\otimes\Pi)(F)\bigr]\xi(s,t;s',t'),
\notag
\end{align}
where $F(s,z;s',z')=\nabla(s)\nabla(z)\nabla(z'^{-1})k(s's)f(z)\delta_{z'}(sz)$.  [Here, $\delta_{z'}(sz)$
is a ``delta function'', such that for any function $g$, we have: $\int g(z')
\delta_{z'}(sz)\,dz'=g(sz)$.]  It is true that $F$ is not really a function in 
$C_c(G\times G\times G\times G)$, but for our purposes, we may regard $F$ as a (generalized) 
``function'' expression corresponding to ${}_{\mathcal R}\Delta\bigl(\Pi(\mu_k\otimes L_f)\bigr)
\in M\bigl(D(G)\otimes D(G)\bigr)$.

Next, use equation~\eqref{(pairingC_c)} to compute the dual pairing (again formally). 
We then have:
\begin{align}
&\bigl\langle(a\otimes b)\otimes(a'\otimes b')\,|\,{}_{\mathcal R}\Delta(k\otimes f)
\bigr\rangle=\bigl\langle(a\otimes b)\otimes(a'\otimes b')\,|\,F\bigr\rangle
\notag \\
&=\int\nabla(t)\nabla(t')a(t^{-1}st)b(t^{-1})a'(t'^{-1}s't')b'(t'^{-1})F(s,t;s',t')\,dsdtds'dt'
\notag \\
&=\int\nabla(t)\nabla(s)\nabla(t)a(t^{-1}st)b(t^{-1})a'(t^{-1}s^{-1}s'st)b'(t^{-1}s^{-1})
k(s's)f(t)\,dsdtds'. 
\notag
\end{align}
By change of variables (letting $s'\mapsto s's^{-1}$ and then letting $s\mapsto tst^{-1}$), 
it becomes:
\begin{align}
\cdots&=\int\nabla(t)a(s)b(t^{-1})a'(s^{-1}t^{-1}s't)b'(s^{-1}t^{-1})k(s')f(t)\,dsds'dt
\notag \\
&=\int\nabla(t)G(t^{-1}s't,t^{-1})k(s')f(t)\,ds'dt
=\langle G\,|\,k\otimes f\rangle,
\notag
\end{align}
where $G(t^{-1}s't,t^{-1})=\int a(s)b(t^{-1})a'(s^{-1}t^{-1}s't)b'(s^{-1}t^{-1})\,ds$.
From which it follows that $G(p,t)=\int a(z)b(t)a'(z^{-1}p)b'(z^{-1}t)\,dz$.

Motivated by these computations (although not fully rigorous and depend on formal 
computations), we propose to define the ``deformed product'' $\times_{\mathcal R}$ 
on $C_c(G\times G)$, as follows:
$$
\bigl[(a\otimes b)\times_{\mathcal R}(a'\otimes b')\bigr](s,t)=G(s,t)
=\int a(z)b(t)a'(z^{-1}s)b'(z^{-1}t)\,dz.
$$
Observe that $\times_{\mathcal R}$ is indeed a valid associative product on 
$C_c(G\times G)$.  See below.

\begin{prop}\label{deformedprod}
On $C_c(G\times G)$, define the ``deformed product'' $\times_{\mathcal R}$, as follows:
$$
\bigl[(a\otimes b)\times_{\mathcal R}(a'\otimes b')\bigr](s,t)
=\int a(z)b(t)a'(z^{-1}s)b'(z^{-1}t)\,dz.
$$
It is a valid associative product on $C_c(G\times G)$, and is ``dual'' to the deformed 
comultiplication ${}_{\mathcal R}\Delta$, in the (formal) sense described above.
\end{prop}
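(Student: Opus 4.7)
The proof splits naturally into two pieces: verifying that $\times_{\mathcal R}$ is a well-defined associative operation on $C_c(G\times G)$, and justifying that it is ``dual'' to ${}_{\mathcal R}\Delta$ in the sense formalized above.

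The plan is first to rewrite the defining formula in a form that does not rely on the elementary-tensor presentation. For $F,G\in C_c(G\times G)$, set
$$
[F\times_{\mathcal R}G](s,t):=\int F(z,t)\,G(z^{-1}s,z^{-1}t)\,dz.
$$
Specializing $F=a\otimes b$ and $G=a'\otimes b'$ recovers the formula stated in the proposition, so by bilinearity this unambiguously extends $\times_{\mathcal R}$ to all of $C_c(G\times G)$. A short support argument (intersecting the supports of $F$ and $G$ in the integration variable $z$, and using that $G$ has compact support in both variables) shows that $F\times_{\mathcal R}G\in C_c(G\times G)$ and that the integral converges, so the operation is well-defined.

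Next I would check associativity by direct computation from the reformulated expression. Writing out $[(F\times_{\mathcal R}G)\times_{\mathcal R}H](s,t)$ yields a double integral in variables $w,z$, namely
$$
\int\!\!\int F(w,t)\,G(w^{-1}z,w^{-1}t)\,H(z^{-1}s,z^{-1}t)\,dw\,dz,
$$
while $[F\times_{\mathcal R}(G\times_{\mathcal R}H)](s,t)$ yields
$$
\int\!\!\int F(z,t)\,G(w,z^{-1}t)\,H(w^{-1}z^{-1}s,w^{-1}z^{-1}t)\,dw\,dz.
$$
The substitution $w\mapsto z^{-1}w$ in the inner integral of the second expression (using left invariance of the Haar measure $dw$) transforms it to $\int\!\int F(z,t)G(z^{-1}w,z^{-1}t)H(w^{-1}s,w^{-1}t)\,dw\,dz$, which after relabeling $(w,z)\leftrightarrow(z,w)$ agrees with the first. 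This gives associativity.

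For the duality statement, the detailed formal calculation carried out immediately before the proposition already produces $\times_{\mathcal R}$ from the pairing with ${}_{\mathcal R}\Delta$; what is left is to record this as a proof. Concretely, I would note that the computation there established
$$
\bigl\langle(a\otimes b)\otimes(a'\otimes b')\,\big|\,{}_{\mathcal R}\Delta(k\otimes f)\bigr\rangle
=\langle G\,|\,k\otimes f\rangle,
$$
with $G(p,t)=\int a(z)b(t)a'(z^{-1}p)b'(z^{-1}t)\,dz$; but this $G$ is precisely $(a\otimes b)\times_{\mathcal R}(a'\otimes b')$, which gives the duality identity for elementary tensors and hence, by bilinearity, in general.

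The main obstacle is interpretive rather than computational: the ``function'' $F$ representing ${}_{\mathcal R}\Delta\bigl(\Pi(\mu_k\otimes L_f)\bigr)$ involves a delta distribution along the submanifold $z'=sz$, so the pairing $\langle(a\otimes b)\otimes(a'\otimes b')\,|\,F\rangle$ is not literally an integral of continuous functions. I would address this by viewing the displayed computation as a distributional identity: the delta factor $\delta_{z'}(sz)$ appears only inside integrals where a test function smooths it out, and the $C_c$ assumption on $a,b,a',b',k,f$ guarantees that every integral after the reduction is absolutely convergent. Once this bookkeeping is acknowledged, the duality identity is rigorous at the level of the dense subalgebra $C_c(G\times G)\subseteq\widehat{D(G)}$, which is all that is claimed.
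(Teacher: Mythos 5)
Your argument is correct, and the duality half coincides with the paper's: the paper also treats the formal pairing computation preceding the proposition as the justification for ``duality,'' with the same caveat about the delta-function expression being only formally meaningful. Where you diverge is on associativity. You reformulate the product for general $F,G\in C_c(G\times G)$ as $[F\times_{\mathcal R}G](s,t)=\int F(z,t)G(z^{-1}s,z^{-1}t)\,dz$ and verify associativity directly by a change of variables $w\mapsto z^{-1}w$ using left invariance of the Haar measure; your computation is right, including the support argument showing closure under the product. The paper instead observes that this same general formula is precisely the twisted convolution product on $L^1\bigl(G,C_0(G)\bigr)$ for the translation action $\tau$, i.e.\ $(F\ast G)(s)=\int F(z)\tau_z\bigl(G(z^{-1}s)\bigr)\,dz$, so associativity is inherited from the standard crossed-product construction. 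Your route is more elementary and self-contained; the paper's route buys more, since the identification with $C_c\bigl(G,C_0(G)\bigr)\subseteq C_0(G)\rtimes_{\tau}G$ is exactly what is needed immediately afterward to produce the enveloping $C^*$-algebra (the Weyl algebra) in Proposition~5.5 and the isomorphism with ${\mathcal K}\bigl(L^2(G)\bigr)$. If you keep your direct verification, you would still want to record the identification with the crossed product separately, since the bare associative algebra $\bigl(C_c(G\times G),\times_{\mathcal R}\bigr)$ does not by itself come with a $C^*$-completion.
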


Showing that $\times_{\mathcal R}$ is indeed an associative product on $C_c(G\times G)$
is quite straightforward.  In fact, we can actually construct a $C^*$-algebra that 
contains $\bigl(C_c(G\times G),\times_{\mathcal R}\bigr)$ as a dense subalgebra. 
The method is to follow the standard procedure for constructing a crossed product 
$C^*$-algebra (where $G$ acts on $C_0(G)$ by translation $\tau$).

To be more specific, regard a typical element $a\otimes b\in C_c(G\times G)$ as an 
element $F\in C_c\bigl(G,C_0(G)\bigr)$.  We can then form the space $L^1\bigl(G,C_0(G)
\bigr)$, by completing $C_c\bigl(G,C_0(G)\bigr)$ with respect to the following norm:
$$
\|F\|_1=\int_G\bigl\|F(s)\bigr\|_{\infty}\,ds=\int_G\operatorname{sup}_{t\in G}
\bigl|F(s,t)\bigr|\,ds.
$$
On this $L^1$-space, we can consider the twisted convolution product and the adjoint 
operation, twisted by $\tau$, obtaining the ${}^*$-algebra $L^1\bigl(G,C_0(G)\bigr)$. 
Namely,
\begin{align}
(F\ast G)(s)&=\int_G F(z)\tau_z\bigl(G(z^{-1}s)\bigr)\,dz,
\notag \\
F^*(s)&=\nabla(s^{-1})\tau_s\bigl(F(s^{-1})^*\bigr).
\notag
\end{align}
The crossed product $C^*$-algebra $C_0(G)\rtimes_{\tau}G$ is defined to be the 
enveloping $C^*$-algebra of the ${}^*$-algebra $L^1\bigl(G,C_0(G)\bigr)$.

By viewing $F$ and $G$ as functions on $G\times G$, the multiplication and the 
${}^*$-operation on the $L^1$-algebra become:
\begin{align}
(F\ast G)(s,t)&=\int_G F(z,t)G(z^{-1}s,z^{-1}t)\bigr)\,dz,
\notag \\
F^*(s,t)&=\nabla(s^{-1})\overline{F(s^{-1},s^{-1}t)}.
\notag
\end{align}
Observe that the twisted multiplication is none other than the deformed product 
$\times_{\mathcal R}$ given in Proposition~\ref{deformedprod}.  Therefore, the 
crossed product $C^*$-algebra $B=C_0(G)\rtimes_{\tau}G$ is a $C^*$-algebra 
containing $\bigl(C_c(G\times G),\times_{\mathcal R}\bigr)$ as a dense subalgebra.

\begin{prop}\label{k(h)}
In view of the above discussion, we may regard the $C^*$-algebra $B=C_0(G)
\rtimes_{\tau}G$ as a ``deformed $\widehat{D(G)}$'', whose product is dual to 
the ``deformed comultiplication'' ${}_{\mathcal R}\Delta$ on $D(G)$.  It contains 
$\bigl(C_c(G\times G),\times_{\mathcal R}\bigr)$ as a dense subalgebra.  Meanwhile, 
it is known that there exists an isomorphism of $C^*$-algebras between $C_0(G)
\rtimes_{\tau}G$ (which is sometimes called the ``Weyl algebra'') and the 
$C^*$-algebra  of compact operators ${\mathcal K}\bigl(L^2(G)\bigr)$.  That is,
$$
C_0(G)\rtimes_{\tau}G\cong{\mathcal K}\bigl(L^2(G)\bigr).
$$
\end{prop}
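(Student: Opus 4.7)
The plan is to exhibit the isomorphism via the canonical covariant representation of $(C_0(G), G, \tau)$ on the Hilbert space $\mathcal{H} = L^2(G)$, and then identify the image with the compact operators.

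First I would set up the covariant pair. Let $M: C_0(G) \to \mathcal{B}(L^2(G))$ be the multiplication representation, $M_b\xi(t)=b(t)\xi(t)$, and let $\lambda: G \to \mathcal{U}(L^2(G))$ be the left regular representation, $\lambda_s\xi(t)=\xi(s^{-1}t)$. A direct computation shows the covariance relation $\lambda_s M_b \lambda_s^* = M_{\tau_s(b)}$, so by the universal property of the crossed product, we obtain a nondegenerate ${}^*$-homomorphism
$$
\Phi: C_0(G)\rtimes_{\tau}G \longrightarrow \mathcal{B}(L^2(G)),
\qquad
\Phi(F) = \int_G M_{F(s,\,\cdot\,)}\,\lambda_s\,ds
$$
for $F\in C_c(G,C_0(G))\cong C_c(G\times G)$.

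Next, I would compute the kernel of $\Phi(F)$ explicitly. A short calculation gives
$$
\bigl(\Phi(F)\xi\bigr)(t) = \int_G F(s,t)\,\xi(s^{-1}t)\,ds,
$$
and after a change of variables $r = s^{-1}t$ this rewrites $\Phi(F)$ as an integral operator on $L^2(G)$ with kernel
$$
K_F(t,r) = F(tr^{-1},t)\,\nabla(r^{-1}),
$$
using the transformation rule for the left Haar measure. As $F$ ranges over $C_c(G\times G)$, the kernel $K_F$ also ranges over all of $C_c(G\times G)$ (up to the smooth, nonvanishing factor $\nabla(r^{-1})$, which is harmless). Since such integral operators are Hilbert--Schmidt and are norm-dense in $\mathcal{K}(L^2(G))$, it follows that $\Phi$ maps $C_0(G)\rtimes_{\tau}G$ into $\mathcal{K}(L^2(G))$ with dense range.

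It then remains to show that $\Phi$ is injective. This is the main obstacle, because a priori the integrated representation of a crossed product need not be faithful. I would argue injectivity in one of two equivalent ways. The direct route is to observe that the covariant pair $(M,\lambda)$ is the induced representation from the trivial subgroup $\{e\}\subseteq G$; since $G$ acts freely and transitively on itself by translation, Green's imprimitivity theorem yields that $C_0(G)\rtimes_{\tau}G$ is strongly Morita equivalent to $C^*(\{e\})=\mathbb{C}$, and the induced representation $\Phi$ realizes the corresponding equivalence bimodule. A $C^*$-algebra Morita equivalent to $\mathbb{C}$ is of the form $\mathcal{K}(H)$ for some Hilbert space $H$, and the explicit kernel computation above identifies $H$ with $L^2(G)$. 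Alternatively, one may note that since $\mathcal{K}(L^2(G))$ is simple, the surjection of $\Phi$ onto a dense subset of $\mathcal{K}(L^2(G))$ (combined with closed range, ensured because $\Phi$ extends to a ${}^*$-homomorphism of $C^*$-algebras) forces $\Phi(C_0(G)\rtimes_{\tau}G)=\mathcal{K}(L^2(G))$, and then injectivity can be verified on the dense subalgebra by the kernel formula, since $K_F=0$ implies $F=0$.

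Combining the density of the range with injectivity yields the desired isomorphism $C_0(G)\rtimes_{\tau}G\cong\mathcal{K}(L^2(G))$.
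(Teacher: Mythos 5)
Your argument is correct in substance, but it takes a more self-contained route than the paper: the paper does not actually prove the isomorphism $C_0(G)\rtimes_{\tau}G\cong{\mathcal K}\bigl(L^2(G)\bigr)$ at all --- it cites Rieffel's paper on the uniqueness of the Heisenberg commutation relations (essentially the Stone--von Neumann theorem) for this ``known'' fact, and the remaining content of the proposition (that $B$ contains $\bigl(C_c(G\times G),\times_{\mathcal R}\bigr)$ as a dense subalgebra) is established in the discussion immediately preceding it, by identifying $\times_{\mathcal R}$ with the $\tau$-twisted convolution on $L^1\bigl(G,C_0(G)\bigr)$. What you supply is the standard proof behind the citation: the Schr\"odinger covariant pair $(M,\lambda)$, the integrated form $\Phi$, the kernel computation $K_F(t,r)=F(tr^{-1},t)\nabla(r^{-1})$, density of such kernels in $L^2(G\times G)$, and faithfulness. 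That is a legitimate and arguably more informative presentation. One caution: of your two proposed injectivity arguments, only the first (Green's imprimitivity, equivalently Stone--von Neumann) is valid. The second fails at its last step: since the paper's $C_0(G)\rtimes_{\tau}G$ is the enveloping (i.e.\ full) crossed product of the $L^1$-algebra, injectivity of $\Phi$ on the dense subalgebra $C_c(G\times G)$ does not imply injectivity of $\Phi$ on the $C^*$-completion --- a ${}^*$-homomorphism of $C^*$-algebras can be faithful on a dense ${}^*$-subalgebra and still have a nonzero kernel (compare $C^*(F_2)\to C^*_r(F_2)$, which is injective on the group ring). One genuinely needs the fact that every covariant representation of $(C_0(G),G,\tau)$ is a multiple of the Schr\"odinger representation, which is exactly what Green's theorem (or Rieffel's result cited in the paper) provides; so your ``alternative'' route should be dropped or repaired, while your first route completes the proof.
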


As for the second characterization, see, for instance, \cite{Rfhcr}.  By the way, 
note that in the von Neumann algebraic setting, our result would have been not much 
illuminating, since $\overline{{\mathcal K}({\mathcal H})}^w={\mathcal B}({\mathcal H})$. 
This is the reason why we have chosen to work with the $C^*$-algebra framework in 
Sections~4 and 5.

Compare now with the finite-dimensional case, considered by Lu \cite{Ludouble}, 
\cite{Mo}.  Lu's result says that given a Hopf algebra $H$, the twisting (via the 
$R$-matrix) of the dual of the quantum double turns out to be isomorphic to the 
``smash product'' $H\#\hat{H}$, which is in turn isomorphic to $\operatorname{End}(H)$ 
(see \S9 of \cite{Mo}).  A similar result was obtained in \cite{DeVD}, in the (also 
algebraic) setting of multiplier Hopf algebras.  Our result in Proposition~\ref{k(h)} 
may be viewed as the $C^*$-algebraic counterpart to these results.

\section{Toward the general case.}

Our program of finding a ``twisted $\widehat{A_D}$'' was successful in the ordinary 
group case, mainly because the dual pairing was simple to work with at the level of 
a nice subspace of continuous functions, namely $C_c(G)\subseteq A$.  On the other 
hand, we know that the dual pairing is harder to work with in the general locally 
compact quantum group case.  If we can reduce a little the role being played by the 
actual dual pairing formula, it is likely to lead us to an approach that is more 
general.  

We believe that working with the generalized Fourier transform (as defined earlier) 
could be useful.  In addition, while we wish to keep the overall strategy of the 
previous section, we also wish to find an approach that relies less on the existence 
of a dense subspace consisting of continuous functions.  To find such an approach, 
let us first review the following fact.

Suppose that $(M,\Delta)$ is an arbitrary (von Neumann algebraic) locally compact 
quantum group, with its multiplicative unitary operator $W$.  Recall from Section~2 
that its dual object $\hat{M}$ is given by 
$$
\hat{M}=\overline{\bigl\{(\omega\otimes\operatorname{id})(W):\omega\in M_*\bigr\}}^w.
$$
What this means is that the von Neumann algebra $\hat{M}$ is generated by the pre-dual 
$M_*$ of $M$, via the ``regular representation'' $\lambda:\omega\mapsto(\omega\otimes
\operatorname{id})(W)$.  Moreover, the operator multiplication makes $M_*$ to be 
considered as an algebra.  See Lemma~\ref{M_*} below:

\begin{lem}\label{M_*}
Let $(M,\Delta)$ be a locally compact quantum group, with its multiplicative unitary 
operator $W$.  Denote by $M_*$ the pre-dual of the von Neumann algebra $M$.  Then 
$M_*$ can be given a natural algebra structure, together with a densely defined 
${}^*$-operation:
\begin{enumerate}
  \item For $\omega,\omega'\in M_*$, we have: $\lambda(\omega)\lambda(\omega')
=\lambda(\mu)$ in $\hat{M}$, where $\mu\in M_*$ is such that
$$
\mu(x)=(\omega\otimes\omega')(\Delta x),\qquad {\text {for $x\in M$.}}
$$
  \item Write $\omega\in M_*^{\sharp}$, if $\omega\in M_*$ is such that there 
exists an element $\omega^{\sharp}\in M_*$, given by: 
$$
\omega^{\sharp}(x)
=\bar{\omega}\bigl(S(x)\bigr)=\overline{\omega\bigl([S(x)]^*\bigr)},
\qquad{\text {for all $x\in{\mathcal D}(S)$}}.  
$$
Then we have: $\bigl[\lambda(\omega)\bigr]^*=\lambda(\omega^{\sharp})$ as operators 
in $\hat{M}$.  Meanwhile, the subspace $M_*^{\sharp}$ is a dense subalgebra (in the 
sense of (1) above) of $M_*$, which is closed under taking ${}^{\sharp}$.
\end{enumerate}
\end{lem}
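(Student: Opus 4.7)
The plan is to exploit two structural identities: the pentagon equation for $W$, which is equivalent to $(\Delta \otimes \operatorname{id})(W) = W_{13}W_{23}$, and the antipode characterization $S\bigl((\operatorname{id} \otimes \theta)(W)\bigr) = (\operatorname{id} \otimes \theta)(W^*)$ from equation~\eqref{(antipode)}. For (1), I would compute $\lambda(\omega)\lambda(\omega') = (\omega \otimes \omega' \otimes \operatorname{id})(W_{13}W_{23})$ and then substitute $W_{13}W_{23} = (\Delta \otimes \operatorname{id})(W)$ to rewrite it as $\bigl((\omega \otimes \omega') \circ \Delta \otimes \operatorname{id}\bigr)(W) = \lambda(\mu)$. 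That $\mu = (\omega \otimes \omega') \circ \Delta$ actually lies in $M_*$ follows from normality of $\Delta$ and of $\omega \otimes \omega'$.

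For the adjoint claim in (2), I would take adjoints directly to obtain $\lambda(\omega)^* = (\bar{\omega} \otimes \operatorname{id})(W^*)$ with $\bar{\omega}(y) := \overline{\omega(y^*)}$, and then pair both candidates against an arbitrary $\theta \in \mathcal{B}(\mathcal{H})_*$. For $a := (\operatorname{id} \otimes \theta)(W) \in \mathcal{A}$ we have $S(a) = (\operatorname{id} \otimes \theta)(W^*)$, so $\theta\bigl(\lambda(\omega)^*\bigr) = \bar{\omega}(S(a))$ while $\theta\bigl(\lambda(\omega^{\sharp})\bigr) = \omega^{\sharp}(a)$; these agree by the very defining formula of $\omega^{\sharp}$. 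Since $\mathcal{A}$ is a core for $S$ and both sides are normal, the equality extends from $\mathcal{A}$ to all of $M$.

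The hard part will be the assertion that $M_*^{\sharp}$ is a dense subalgebra (for the product from (1)) closed under $\sharp$. Closure under the convolution product reduces, via $\Delta \circ S = (S \otimes S) \circ \Delta^{\operatorname{cop}}$ on the core of $S$, to the identity $(\omega \cdot \omega')^{\sharp} = (\omega')^{\sharp} \cdot \omega^{\sharp}$ after unraveling the complex conjugations; closure under $\sharp$ itself comes from the polar decomposition $S = R\tau_{-i/2}$ combined with boundedness of $R$. Density I would handle by the standard Kustermans--Vaes regularization: for $\omega \in M_*$ set
\[
\omega_n(x) := \sqrt{\tfrac{n}{\pi}} \int \omega\bigl(\tau_t(x)\bigr)\, e^{-nt^2}\,dt,
\]
which converges to $\omega$ in norm, and for which the analytic continuation $t \mapsto \omega_n \circ \tau_t$ to the whole complex plane produces a bounded normal candidate for $\omega_n^{\sharp}$. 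This regularization is the only step where genuinely analytic input, beyond the pentagon and antipode formulas, is required.
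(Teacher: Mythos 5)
Your argument is correct and is exactly the standard Kustermans--Vaes/Baaj--Skandalis argument (pentagon plus $\Delta(a)=W^*(1\otimes a)W$ for the convolution product, the antipode characterization \eqref{(antipode)} for the adjoint, and Gaussian smearing over the scaling group for density of $M_*^{\sharp}$), which is precisely what the paper relies on: it offers no proof of its own, deferring in the following remark to manageability of $W$ and the cited references. The only cosmetic point is that once $\theta(\lambda(\omega)^*)=\theta(\lambda(\omega^{\sharp}))$ holds for all normal $\theta$ the operator identity is immediate, so no further ``extension from $\mathcal{A}$ to $M$'' is needed; likewise closure of $M_*^{\sharp}$ under ${}^{\sharp}$ already follows from $S(S(x)^*)^*=x$, i.e.\ $(\omega^{\sharp})^{\sharp}=\omega$.
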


\begin{rem}
A similar result exist with roles of $M$ and $\hat{M}$ reversed.  That is, we may 
think of the von Neumann algebra $M$ being generated by the pre-dual $\hat{M}_*$ 
of $\hat{M}$, via the ``regular representation'' $\hat{\lambda}:\theta\mapsto
(\operatorname{id}\otimes\theta)(\hat{W})=(\theta\otimes\operatorname{id})(W^*)$. 
The situation is basically the same.  All these are immediate consequences of the 
fact that the multiplicative unitary operator $W$ associated with a locally compact 
quantum group is ``manageable''.  See the fundamental papers \cite{BS} and \cite{Wr7}.
\end{rem}

For $M=L^{\infty}(G)$, it is well-known that $M_*=L^1(G)$.  So $M_*$, with its algebra 
structure given by Lemma~\ref{M_*}, is a sort of an $L^1$-algebra that generates the 
von Neumann algebra $\hat{M}$.  This observation suggests that to ``deform'' $\hat{M}$ 
(or $\hat{A}$), we may begin by deforming the algebra structure on $M_*$.  

To follow this strategy in our case, consider now the quantum double $(N_D,\Delta_D)$, 
whose $C^*$-algebraic counterpart is $(A_D,\Delta_D)$.  The multiplicative unitary 
operator is $W_D$, as defined in equation~\eqref{(WDdef)}.  To ``deform'' 
$\widehat{A_D}$, consider the pre-dual $(N_D)_*$ of the von Neumann algebra $N_D$, 
and let us introduce a new multiplication on it, as follows:

\begin{prop}\label{N_D_*}
Let $(N_D)_*$ denote the pre-dual of the von Neumann algebra $N_D$.  For $\omega,
\omega'\in(N_D)_*$, define $\omega\ast_{\mathcal R}\omega'\in(N_D)_*$ by
$$
(\omega\ast_{\mathcal R}\omega')(x):=(\omega\otimes\omega')\bigl({\mathcal R}
\Delta_D(x)\bigr),\qquad {\text {for $x\in M_D$}}.
$$
Then $\ast_{\mathcal R}$ is an associative multiplication on $(N_D)_*$.
\end{prop}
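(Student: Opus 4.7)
The plan is to reduce the associativity of $\ast_{\mathcal R}$ to the coassociativity of ${}_{\mathcal R}\Delta$ already established in Proposition~\ref{RDeltacoassociativity}, with only a short preliminary check that the prescribed formula lands in $(N_D)_*$ rather than merely in the Banach dual $N_D^*$.

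For the well-definedness step, I would argue as follows. By Proposition~\ref{rmatrix}(1), ${\mathcal R}\in N_D\otimes N_D$ is a fixed bounded operator, and $\Delta_D$ is a normal $\ast$-homomorphism, so $x\mapsto{\mathcal R}\Delta_D(x)$ is a bounded normal linear map from $N_D$ into $N_D\otimes N_D$. For $\omega,\omega'\in(N_D)_*$, the tensor product $\omega\otimes\omega'$ is a normal functional on $N_D\otimes N_D$, and composing it with the above map yields a bounded normal functional on $N_D$; hence $\omega\ast_{\mathcal R}\omega'\in(N_D)_*$, and the binary operation is internally defined.

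For associativity, write ${}_{\mathcal R}\Delta(x)={\mathcal R}\Delta_D(x)$ and unfold the two iterated products by the standard normal slice-map identities. One obtains
$$
\bigl((\omega\ast_{\mathcal R}\omega')\ast_{\mathcal R}\omega''\bigr)(x)
=(\omega\otimes\omega'\otimes\omega'')\bigl(({}_{\mathcal R}\Delta\otimes\operatorname{id})\,{}_{\mathcal R}\Delta(x)\bigr)
$$
and symmetrically
$$
\bigl(\omega\ast_{\mathcal R}(\omega'\ast_{\mathcal R}\omega'')\bigr)(x)
=(\omega\otimes\omega'\otimes\omega'')\bigl((\operatorname{id}\otimes{}_{\mathcal R}\Delta)\,{}_{\mathcal R}\Delta(x)\bigr).
$$
Proposition~\ref{RDeltacoassociativity} asserts exactly that the two arguments coincide, so the two functionals agree on every $x\in N_D$; since $\omega,\omega',\omega''$ were arbitrary, $\ast_{\mathcal R}$ is associative.

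The only place that requires care, and what I expect to be the main bookkeeping obstacle, is justifying the first of the two unfoldings above. Concretely one needs the slice identity
$$
\bigl((\omega\ast_{\mathcal R}\omega')\otimes\operatorname{id}\bigr)(y)
=(\omega\otimes\omega'\otimes\operatorname{id})\bigl(({}_{\mathcal R}\Delta\otimes\operatorname{id})(y)\bigr)
$$
for $y\in N_D\otimes N_D$, applied to $y={}_{\mathcal R}\Delta(x)$. This is a routine normality argument: it is immediate for elementary tensors directly from the definition of $\omega\ast_{\mathcal R}\omega'$, and extends by $\sigma$-weak continuity to all of $N_D\otimes N_D$ since both sides are separately normal in $y$. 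Once this identity is in hand, no further structural ingredient — neither the quantum Yang--Baxter equation nor any pairing machinery — is needed; everything nontrivial has already been absorbed into the coassociativity of ${}_{\mathcal R}\Delta$.
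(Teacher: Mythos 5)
Your proposal is correct and follows the same route as the paper: the paper's proof consists of the single observation that associativity of $\ast_{\mathcal R}$ is an immediate consequence of the coassociativity of ${}_{\mathcal R}\Delta$ established in Proposition~\ref{RDeltacoassociativity}. Your write-up merely fills in the routine normality and slice-map details that the paper leaves implicit.
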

 
\begin{proof}
The associativity of $\ast_{\mathcal R}$ is an immediate consequence of the 
coassociativity of the map $x\mapsto{\mathcal R}\Delta_D(x)={}_{\mathcal R}
\Delta(x)$, as noted in Proposition~\ref{RDeltacoassociativity}.
\end{proof}

Let us now look for a representation $Q$ of $\bigl((N_D)_*,\ast_{\mathcal R}
\bigr)$ into ${\mathcal B}({\mathcal H}\otimes{\mathcal H})$.  First, recall that 
the operators $(\omega\otimes\operatorname{id})(W_D)$, $\omega\in(N_D)_*$, 
are dense in $\widehat{A_D}$.  If we denote by $\hat{\Lambda}_{D}$ the GNS 
map for the Haar weight $\widehat{\varphi_D}$ of $\widehat{A_D}$, we thus 
know that the elements of the form $\hat{\Lambda}_D\bigl((\omega\otimes
\operatorname{id})(W_D)\bigr)$ are dense in ${\mathcal H}\otimes{\mathcal H}$. 
This suggests the following definition of the ``representation'' $Q$.  At the moment, 
no compatible ${}^*$-structure is specified on $\bigl((N_D)_*,\ast_{\mathcal R}\bigr)$, 
so we only know that $Q$ is an algebra homomorphism.

\begin{defn}\label{Qrep}
Define $Q:\bigl((N_D)_*,\ast_{\mathcal R}\bigr)\to{\mathcal B}({\mathcal H}\otimes
{\mathcal H})$ by
$$
Q(\omega)\hat{\Lambda}_D\bigl((\nu\otimes\operatorname{id})(W_D)\bigr):=
\hat{\Lambda}_D\bigl(([\omega\ast_{\mathcal R}\nu]\otimes\operatorname{id})
(W_D)\bigr).
$$
Since $\ast_{\mathcal R}$ is associative, and since the $\hat{\Lambda}_D\bigl((\nu
\otimes\operatorname{id})(W_D))\bigr)$, $\nu\in(N_D)_*$, are dense in the 
Hilbert space ${\mathcal H}\otimes{\mathcal H}$, this is certainly an algebra 
homomorphism, preserving the multiplication.  Namely, $Q(\omega)Q(\omega')
=Q(\omega\ast_{\mathcal R}\omega')$.
Define $B$ as the $C^*$-subalgebra of ${\mathcal B}({\mathcal H}\otimes{\mathcal H})$ 
generated by the $Q(\omega)$, $\omega\in(N_D)_*$.  Then $B$ may be considered as 
the ``deformed $\widehat{A_D}$''.
\end{defn}

Unfortunately, finding a more concrete description of the $C^*$-algebra $B$ seems 
rather difficult.  Recall that even before deforming, the $C^*$-algebra $\widehat
{A_D}$ itself could be rather complicated in general.  See comments following 
Proposition~\ref{dualqdouble} and also see \S9 of \cite{BjVa}.  It is likely that 
the $C^*$-algebra $B$ may be just as complicated.

In view of this obstacle, while we will try to push our strategy in the general case, 
we will soon restrict our attention to the case of $D(G)$, and re-formulate the 
result of Section~5 using the new approach suggested by Definition~\ref{Qrep}. 
We hope that this alternative perspective can shed some light on the general case 
in the future.

With these remarks in mind, let us learn a little more about the subalgebra 
$Q\bigl((N_D)_*\bigr)\subseteq{\mathcal B}({\mathcal H}\otimes{\mathcal H})$. 
Suppose $\omega,\nu\in(N_D)_*$, and let $x\in{\mathcal N}_D$ be arbitrary. 
Then by Definition~\ref{Qrep}, we have:
\begin{align}
&\bigl\langle Q(\omega)\hat{\Lambda}_D\bigl((\nu\otimes\operatorname{id})(W_D)\bigr),
\Lambda_D(x)\bigr\rangle=\bigl\langle\hat{\Lambda}_D\bigl(([\omega\ast_{\mathcal R}\nu]
\otimes\operatorname{id})(W_D)\bigr),\Lambda_D(x)\bigr\rangle  \notag \\
&=(\omega\ast_{\mathcal R}\nu)(x^*)  
=(\omega\otimes\nu)\bigl({\mathcal R}\Delta_D(x^*)\bigr)
=(\omega\otimes\nu)(\Delta_D^{\operatorname{cop}}(x^*){\mathcal R}\bigr)
\notag \\
&=\bigl\langle\hat{\Lambda}_D\bigl((\omega\otimes\operatorname{id})(W_D)\bigr)
\otimes\hat{\Lambda}_D\bigl((\nu\otimes\operatorname{id})(W_D)\bigr),(\Lambda_D
\otimes\Lambda_D)\bigl({\mathcal R}^*\Delta_D^{\operatorname{cop}}(x)\bigr)
\bigr\rangle.
\notag
\end{align}
Here $\langle\ ,\ \rangle$ denotes the inner product, the first two are on ${{\mathcal H}
\otimes{\mathcal H}}$, while the last one is on ${({\mathcal H}\otimes{\mathcal H})
\otimes({\mathcal H}\otimes{\mathcal H})}$.  The second and the fifth equalities 
are just using the definition of $\hat{\Lambda}_D$, as in equation~\eqref{(dualHaar)}. 
The third equality is from Proposition~\ref{N_D_*}, and the fourth equality is the result 
of Proposition~\ref{rmatrix}\,(3).

Meanwhile, we know from Section~3 that $\widehat{N_D}=N\otimes\hat{N}$, which 
means that the elements $(\omega\otimes\operatorname{id})(W_D)$, $\omega
\in(N_D)_*$, are approximated by the elements of the form, $a\otimes b$, where 
$a\in{\mathcal A}\,(\subseteq N)$, $b\in\hat{\mathcal A}\,(\subseteq\hat{N})$. 
Therefore, the product $\ast_{\mathcal R}$ from Proposition~\ref{N_D_*} determines 
the ``deformed product'', $\times_{\mathcal R}$, on a certain dense subspace of 
$N\otimes\hat{N}$.  Then the computation above may be re-written as follows:
\begin{align}\label{(Rprod1)}
&\bigl\langle\hat{\Lambda}_D\bigl((a\otimes b)\times_{\mathcal R}(a'\otimes b')\bigr),
\Lambda_D(x)\bigr\rangle       \notag \\
&=\bigl\langle\hat{\Lambda}_D(a\otimes b)\otimes\hat{\Lambda}_D(a'\otimes b'),
{\mathcal R}^*(\Lambda_D\otimes\Lambda_D)\bigl(\Delta_D^{\operatorname{cop}}(x)
\bigr)\bigr\rangle       \notag \\
&=\bigl\langle{\mathcal R}[\hat{\Lambda}_D(a\otimes b)\otimes\hat{\Lambda}_D
(a'\otimes b')],(\Lambda_D\otimes\Lambda_D)\bigl(\Delta_D^{\operatorname{cop}}(x)
\bigr)\bigr\rangle.
\end{align}
Here we are using the fact $(\Lambda_D\otimes\Lambda_D)\bigl({\mathcal R}^*
\Delta_D^{\operatorname{cop}}(x)\bigr)={\mathcal R}^*\bigl[(\Lambda_D\otimes\Lambda_D)
\bigl(\Delta_D^{\operatorname{cop}}(x)\bigr)\bigr]$, which is true since ${\mathcal R}
\in N_D\otimes N_D$ and since the GNS representation associated with $\Lambda_D$ 
is just the inclusion map $N_D\subseteq{\mathcal B}({\mathcal H}\otimes{\mathcal H})$.

Let us denote by ${\mathcal F}_D$ and ${\mathcal F}^{-1}_D$ the Fourier transform 
and the inverse Fourier transform between certain dense subalgebras of $N_D$ and 
$\widehat{N_D}$, defined in the same way as in Theorem~\ref{pairingthm}.  By the 
property of the Fourier transform (see Propositions~3.5 and 3.7 of \cite{BJKqfourier}),
it is known that $\hat{\Lambda}_D\bigl({\mathcal F}_D(x)\bigr)=\Lambda_D(x)$ and 
that $\Lambda_D\bigl({\mathcal F}^{-1}_D(y)\bigr)=\hat{\Lambda}_D(y)$, where 
$x\in N_D$ and $y\in\widehat{N_D}$ are assumed to be contained in suitable 
domains.  We thus have:
\begin{align}\label{(Rprod2)}
&{\mathcal R}[\hat{\Lambda}_D(a\otimes b)\otimes\hat{\Lambda}_D(a'\otimes b')]
={\mathcal R}\bigl[{\Lambda}_D\bigl({\mathcal F}^{-1}_D(a\otimes b)\bigr)\otimes
{\Lambda}_D\bigl({\mathcal F}^{-1}_D(a'\otimes b')\bigr)\bigr]    \notag \\
&=(\Lambda_D\otimes\Lambda_D)\bigl({\mathcal R}\bigl[{\mathcal F}^{-1}_D(a\otimes b)
\otimes{\mathcal F}^{-1}_D(a'\otimes b')\bigr]\bigr)   \notag \\
&=(\hat{\Lambda}_D\otimes\hat{\Lambda}_D)\bigl[({\mathcal F}_D\otimes{\mathcal F}_D)
\bigl({\mathcal R}\bigl[{\mathcal F}^{-1}_D(a\otimes b)\otimes{\mathcal F}^{-1}_D
(a'\otimes b')\bigr]\bigr)\bigr]. 
\end{align}

\begin{rem}
If we formally extend the Fourier transform, then by the Fourier inversion theorem, 
we may write ${\mathcal R}=({\mathcal F}^{-1}_D\otimes{\mathcal F}^{-1}_D)
\bigl(({\mathcal F}_D\otimes{\mathcal F}_D)({\mathcal R})\bigr)$.  Then the expression 
in the last line above is essentially the ``convolution product'', as defined in 
Proposition~3.11 of \cite{BJKqfourier}.  That is, 
$$
({\mathcal F}_D\otimes{\mathcal F}_D)\bigl({\mathcal R}\bigl[{\mathcal F}^{-1}_D(a\otimes b)
\otimes{\mathcal F}^{-1}_D(a'\otimes b')\bigr]\bigr)=({\mathcal F}_D\otimes{\mathcal F}_D)
({\mathcal R})\ast\bigl((a\otimes b)\otimes(a'\otimes b')\bigr).
$$
We may use the result in \cite{BJKqfourier} to write down an alternative description for 
the convolution product, using the Haar weight and the antipode map.
\end{rem}

Comparing our computations in this section with Proposition~\ref{pairing} (1), 
we see that 
\begin{align}\label{(Rproduct)}
&(a\otimes b)\times_{\mathcal R}(a'\otimes b')   \notag \\
&=\bigl((m_{N})_{31}\otimes(m_{\hat{N}})_{42}\bigr)\bigl[({\mathcal F}_D\otimes
{\mathcal F}_D)\bigl({\mathcal R}\bigl[{\mathcal F}^{-1}_D(a\otimes b)\otimes
{\mathcal F}^{-1}_D(a'\otimes b')\bigr]\bigr)\bigr],
\end{align}
where $m_{N}$ and $m_{\hat{N}}$ denote the multiplications on $N$ and $\hat{N}$, 
respectively.  

While the formula given in equation~\eqref{(Rproduct)} is not entirely rigorous, 
it does give us a workable description (assuming the details like the operator $W_D$, 
the Haar weights, and the Fourier transforms are known) of the ``deformed product'' 
$\times_{\mathcal R}$, on a dense subspace contained in ${\mathcal A}\otimes
\hat{\mathcal A}$.  This is essentially the multiplication on $(N_D)_*$ given 
in Proposition~\ref{N_D_*}.  

As we indicated earlier in the section, we do not plan to carry out the computations 
in full generality, which seems rather difficult.  Instead, let us from now on return to 
the set up and the notations given in Section~5, corresponding to $N={\mathcal L}(G)$ 
and $\hat{N}=L^{\infty}(G)$.  As before, it is convenient to work with the space of 
functions having compact support.

\begin{lem}\label{FTN_D}
Let $a,b\in C_c(G)$ and consider $L_a\otimes\mu_b\in N\otimes\hat{N}=\widehat{N_D}$. 
Then:
$$
{\mathcal F}^{-1}_D(L_a\otimes\mu_b)=(\mu_a\otimes1)Z^*(1\otimes L_{\tilde{b}})Z
=\Pi(\mu_a\otimes L_{\tilde{b}})\in N_D,
$$
where $\tilde{b}(t)=\nabla(t^{-1})b(t)$. [Recall that $\nabla$ is the modular function.]
\end{lem}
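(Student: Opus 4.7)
The plan is to unwind
\[
{\mathcal F}^{-1}_D(y) = (\operatorname{id} \otimes \widehat{\varphi_D})\bigl(W_D^*(1\otimes y)\bigr)
\]
at $y = L_a \otimes \mu_b$ using the factorization $W_D = Z_{12}^* W_{24} Z_{12} \hat{W}_{13}$ from \eqref{(WDdef)} and the weight formula $\widehat{\varphi_D} = \varphi \otimes \hat{\psi}$ from Proposition~\ref{haarweight}\,(2). The first move is a rearrangement: because $Z_{12}$ acts only on the first two slots, while $L_a$ and $\mu_b$ sit on slots $3$ and $4$, $Z_{12}$ commutes past them; similarly $W_{24}^*$ commutes past $L_a$ on slot~$3$. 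After these shuffles, $W_D^*(1\otimes 1\otimes L_a \otimes \mu_b)$ factors as $A_{13}\cdot Y_{124}$ with $A_{13} = \hat{W}_{13}^*(1\otimes 1\otimes L_a \otimes 1)$ on slots $1,3$ and $Y_{124} = Z_{12}^* W_{24}^*(1\otimes 1\otimes 1\otimes \mu_b)Z_{12}$ on slots $1,2,4$.

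Since slot $3$ appears only in $A_{13}$ and slot $4$ only in $Y_{124}$, the weight $\varphi_3 \otimes \hat{\psi}_4$ slices through to give a product $\bigl[(\operatorname{id}\otimes\varphi)(A_{13})\bigr]\cdot\bigl[(\operatorname{id}_{12} \otimes \hat{\psi})(Y_{124})\bigr]$. For the first factor I would use $\hat{W} = \Sigma W^*\Sigma$ to rewrite the slice as $(\varphi \otimes \operatorname{id})(W(L_a\otimes 1)) = {\mathcal F}(L_a)$; in the group case this equals $\mu_a$, which is easiest to verify against \eqref{(Gpairing)} through Theorem~\ref{pairingthm}\,(3). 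For the second factor, $Z_{12}$ slides out of the slice on slot $4$, leaving $Z^*\bigl(1 \otimes (\operatorname{id}\otimes \hat{\psi})(W^*(1\otimes \mu_b))\bigr)Z$. The inner slice would be $\mathcal{F}^{-1}(\mu_b)=L_b$ if $\hat{\varphi}$ were in place of $\hat{\psi}$; using the modular relation $\hat{\psi} = \hat{\varphi}(\,\cdot\,\mu_{\nabla^{-1}})$ converts it into $\mathcal{F}^{-1}(\mu_{\tilde b}) = L_{\tilde b}$, since $b/\nabla = \tilde b$.

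Putting the two pieces together gives
\[
{\mathcal F}^{-1}_D(L_a \otimes \mu_b) = (\mu_a \otimes 1)\,Z^*(1\otimes L_{\tilde b})Z = \pi'(\mu_a)\pi(L_{\tilde b}) = \Pi(\mu_a \otimes L_{\tilde b}),
\]
which is the claim.

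The main obstacle is rigorizing the unbounded slices: both $(\operatorname{id}\otimes\varphi)(\cdots)$ and $(\operatorname{id}\otimes\hat{\psi})(\cdots)$ are defined only on cores for the Fourier transform, so one needs to check that $a,b\in C_c(G)$ place us inside these cores (guaranteed by the setup of Theorem~\ref{pairingthm}). The other subtlety is tracking the modular function; the factor $\tilde b(t) = \nabla(t^{-1})b(t)$ is not an artifact but exactly the correction forced by the presence of $\hat{\psi}$ (not $\hat{\varphi}$) on slot~$4$ of $\widehat{\varphi_D}$.
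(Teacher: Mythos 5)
Your proposal is correct and follows essentially the same route as the paper's proof: the same factorization $W_D^*=\hat{W}_{13}^*Z_{12}^*W_{24}^*Z_{12}$, the same leg-separation so that $\varphi\otimes\hat{\psi}$ slices into a product of two factors, and the same identifications $(\operatorname{id}\otimes\varphi)(\hat{W}^*(1\otimes L_a))={\mathcal F}(L_a)=\mu_a$ and $(\operatorname{id}\otimes\hat{\psi})(W^*(1\otimes\mu_b))=L_{\tilde{b}}$ via the modular correction. Your explicit conversion $\hat{\psi}=\hat{\varphi}(\,\cdot\,\mu_{\nabla^{-1}})$ is in fact slightly more detailed than the paper's ``without much difficulty'' remark at that step.
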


\begin{proof}
By definition, 
$$
{\mathcal F}^{-1}_D(L_a\otimes\mu_b)=(\operatorname{id}\otimes\widehat{{\varphi}_D})
\bigl(W_D^*([1\otimes1]\otimes[L_a\otimes\mu_b])\bigr).
$$
Since $\widehat{\varphi_D}=\varphi\otimes\hat{\psi}$ (see Proposition~\ref{haarweight} (2)),
this becomes:
\begin{align}
{\mathcal F}^{-1}_D(L_a\otimes\mu_b)&=(\operatorname{id}\otimes\operatorname{id}
\otimes\varphi\otimes\hat{\psi})\bigl(\hat{W}^*_{13}Z^*_{12}W^*_{24}Z_{12}(1\otimes1
\otimes L_a\otimes\mu_b)\bigr)   \notag \\
&=(\operatorname{id}\otimes\operatorname{id}\otimes\varphi\otimes\hat{\psi})
\bigl([\hat{W}^*(1\otimes L_a)]_{13}Z^*_{12}[W^*(1\otimes\mu_b)]_{24}Z_{12}\bigr)
\notag \\
&=\bigl(\bigl[(\operatorname{id}\otimes\varphi)\bigl(\hat{W}^*(1\otimes L_a)
\bigr)\bigr]\otimes1\bigr)Z^*\bigl(1\otimes\bigl[(\operatorname{id}\otimes\hat{\psi})
\bigl(W^*(1\otimes\mu_b)\bigr)\bigr]\bigr)Z.
\notag
\end{align}
But remembering that $\hat{W}=\Sigma W^*\Sigma$, we have:
$$
(\operatorname{id}\otimes\varphi)\bigl(\hat{W}^*(1\otimes L_a)\bigr)
=(\varphi\otimes\operatorname{id})\bigl(W(L_a\otimes1)\bigr)={\mathcal F}(L_a)=\mu_a,
$$
where the last result was shown in Section~5 of \cite{BJKqfourier}, and can be obtained 
by a direct computation.  Similarly, $(\operatorname{id}\otimes\hat{\varphi})
\bigl(W^*(1\otimes\mu_b)\bigr)={\mathcal F}^{-1}(\mu_b)=L_b$.  Since $\hat{\psi}$ 
and $\hat{\varphi}$ are related by the modular function (in general, related via 
the ``modular operator''),  we can show without much difficulty that
$$
(\operatorname{id}\otimes\hat{\psi})\bigl(W^*(1\otimes\mu_b)\bigr)=L_{\tilde{b}},
$$
where $\tilde{b}\in C_c(G)$ is as defined above.

Combining the results, we indeed have:
$$
{\mathcal F}^{-1}_D(L_a\otimes\mu_b)=(\mu_a\otimes1)Z^*(1\otimes L_{\tilde{b}})Z
=\Pi(\mu_a\otimes L_{\tilde{b}}).
$$
\end{proof}

While the above Lemma was formulated for the case of $N={\mathcal L}(G)$ and $\hat{N}
=L^{\infty}(G)$, we can see from the proof that a reasonable generalization (using the 
Fourier transform) could be given for more general settings.  In this paper, we will be 
content with the current description, since we will be using a computational method 
in what follows.

Let us now put together the results so far.  In our case, with the Fourier transform being 
rather simple (see Lemma~\ref{FTN_D}), the actual computation is not too difficult.  By 
a straightforward computation, the expression in equation~\eqref{(Rprod2)} becomes: 
$$
({\mathcal F}_D\otimes{\mathcal F}_D)\bigl({\mathcal R}\bigl[{\mathcal F}^{-1}_D
(L_a\otimes\mu_b)\otimes{\mathcal F}^{-1}_D(L_{a'}\otimes\mu_{b'})\bigr]\bigr)
=(L\otimes\mu\otimes L\otimes\mu)(F),   \notag
$$
where $F\in C_c(G\times G\times G\times G)$ is given by
$$
F(s,t,s',t')=\nabla(s)a(s)b(t)a'(s^{-1}s's)b'(s^{-1}t').
$$
Next, equation~\eqref{(Rproduct)} will provide us with the deformed product 
$\times_{\mathcal R}$ on $C_c(G\times G)$, as follows:
\begin{align}\label{((defprod))}
&[(a\times b)\times_{\mathcal R}(a'\otimes b')](s,t)
=\bigl[\bigl((m_N)_{31}\otimes(m_{\hat{N}})_{42}\bigr)(F)\bigr](s,t)
\notag \\
&=\int F(z^{-1}s,t,z,t)\,dz
=\int\nabla(z^{-1}s)a(z^{-1}s)b(t)a'(s^{-1}zzz^{-1}s)b'(s^{-1}zt)\,dz
\notag \\
&=\int\nabla(s)a(zs)b(t)a'(s^{-1}z^{-1}s)b'(s^{-1}z^{-1}t)\,dz
\notag \\
&=\int a(z)b(t)a'(z^{-1}s)b'(z^{-1}t)\,dz.
\end{align}
In the fourth and fifth equalities, we used the change of variables, $z\mapsto z^{-1}$, 
and then $z\mapsto zs^{-1}$.

Observe that we obtain the multiplication on $C_c(G\times G)$ that is exactly the same 
as the one given in Proposition~\ref{deformedprod}.  As we indicated earlier, this is 
none other than the deformed product on $(N_D)_*$ as in Proposition~\ref{N_D_*}. 
Moreover, the $C^*$-algebra $B=C_0(G)\rtimes_{\tau}G$, which was shown in 
Section~5 to be the completion of $\bigl(C_c(G\times G),\times_{\mathcal R}\bigr)$ 
will be the $C^*$-algebra generated by the $Q(\omega)$, $\omega\in(N_D)_*$, 
as described in Definition~\ref{Qrep}.

The computations here support our definition of the ``deformed $\widehat{A_D}$'' 
as given in Definition~\ref{Qrep}.  It is an improvement, since the definition is given 
in a fairly general manner, and since a very straightforward way of construction is 
also obtained via equation~\eqref{(Rproduct)}.

However, we note that the last part of the process, realizing the product given in 
equation~\eqref{(Rproduct)}, needs further improvement.  While the method is 
reasonably practical in the sense that once we have enough information (about 
the Haar weight, the multiplicative unitary operator, and the Fourier transform) 
we can carry out the construction, it will be more desirable if we can reduce 
our dependence on specific computational results.

With this remark in mind, let us include the following observation, which may be 
relevant for future generalization of our program:

\begin{prop}
Let the notations be as above.  Then:
\begin{align}
B=C_0(G)\rtimes_{\tau}G&=\overline{\bigl\{(1\otimes\mu^{\operatorname{op}}_b)
\bigl(\Delta(L_a)\bigr):a,b\in C_c(G)\bigr\}}^{\|\ \|}
\notag \\
&=(1\otimes\hat{A}^{\operatorname{op}})\Delta(A)\,\subseteq{\mathcal B}
({\mathcal H}\otimes{\mathcal H}).
\notag
\end{align}
\end{prop}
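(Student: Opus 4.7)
The plan is to exhibit $\{(1 \otimes \mu_b^{\operatorname{op}})\Delta(L_a):a,b\in C_c(G)\}$ as the image of a faithful integrated representation of $C_0(G)\rtimes_{\tau}G$ on $L^2(G\times G)$, and then conclude by density. First I would make the direct computation using $\Delta(L_a)\xi(s,t)=\int a(z)\xi(z^{-1}s,z^{-1}t)\,dz$ from Section~5, obtaining
\begin{equation*}
\bigl[(1\otimes\mu^{\operatorname{op}}_b)\Delta(L_a)\bigr]\xi(s,t)
=\int a(z)b(t)\,\xi(z^{-1}s,z^{-1}t)\,dz.
\end{equation*}
(Since $\hat A=C_0(G)$ is commutative, $\mu_b^{\operatorname{op}}=\mu_b$; the ``op'' is only recording that the multiplication in $(N_D)_*$ gets reversed at the dual level.) So if we let $F(z,t)=a(z)b(t)\in C_c(G\times G)$, we get the operator $\pi(F)\xi(s,t)=\int F(z,t)\xi(z^{-1}s,z^{-1}t)\,dz$.

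Next I would identify $\pi$ as a genuine $C^*$-representation of $B=C_0(G)\rtimes_{\tau}G$. Set $M(b)=1\otimes\mu_b$ (multiplication in the second variable) and $U(z)\xi(s,t)=\xi(z^{-1}s,z^{-1}t)$ (diagonal left translation). A short computation shows $U(z)M(b)U(z)^{-1}=M(\tau_z b)$, so $(M,U)$ is a covariant representation for $(C_0(G),G,\tau)$, and $\pi$ is exactly its integrated form. Hence $\pi$ extends to a $*$-representation of $B$, and every element of its image is a norm-limit of operators of the form $(1\otimes\mu^{\operatorname{op}}_b)\Delta(L_a)$ (using that $C_c(G)\odot C_c(G)$ is inductive-limit dense in $C_c(G\times G)$, which in turn is dense in $B$).

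The main obstacle is to prove that $\pi$ is \emph{faithful}, so that the norm-closure of the right-hand side is all of $B$ rather than a quotient. For this I would conjugate by the multiplicative unitary $W$. Using $W\xi(s,t)=\xi(ts,t)$, a direct change of variables yields
\begin{equation*}
W\pi(F)W^{-1}\xi(s,t)=\int F(z,t)\,\xi(s,z^{-1}t)\,dz,
\end{equation*}
i.e.\ $W\pi(F)W^{-1}=1\otimes\pi_0(F)$, where $\pi_0$ is the \emph{standard} (integrated) regular representation of $C_0(G)\rtimes_{\tau}G$ on $L^2(G)$. Since $\pi_0$ is the one realizing the isomorphism $C_0(G)\rtimes_{\tau}G\cong\mathcal{K}(L^2(G))$ cited in Proposition~\ref{k(h)}, it is faithful; hence $F\mapsto 1\otimes\pi_0(F)$ is faithful, and so is $\pi=W^{-1}(1\otimes\pi_0(\cdot))W$.

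Combining the three pieces, $\pi(B)=\overline{\pi(C_c(G\times G))}^{\|\ \|}$ coincides with the norm-closure of $\{(1\otimes\mu^{\operatorname{op}}_b)\Delta(L_a):a,b\in C_c(G)\}$, and $\pi:B\to\pi(B)$ is a $C^*$-isomorphism. This establishes both displayed equalities: the first by construction, and the second by observing that $(1\otimes\hat A^{\operatorname{op}})\Delta(A)$ is the norm-closure of this same generating set. The only mild subtlety I foresee is making precise that elementary tensors are dense in $C_c(G\times G)$ for the (stronger than $C^*$-norm) $L^1(G,C_0(G))$-norm; this is routine via a partition-of-unity / Stone--Weierstrass argument on compact sets, and does not require regularity of the quantum double.
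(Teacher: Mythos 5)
Your proposal is correct, and its computational core is the same as the paper's: both evaluate $(1\otimes\mu^{\operatorname{op}}_b)\bigl(\Delta(L_a)\bigr)$ on $\xi\in L^2(G\times G)$, obtain the kernel operator $\xi(s,t)\mapsto\int a(z)b(t)\xi(z^{-1}s,z^{-1}t)\,dz$, and match it against the twisted convolution $\times_{\mathcal R}$ realizing $C_0(G)\rtimes_{\tau}G$. Where you go beyond the paper's proof is in making the final identification honest: the paper stops at ``comparing with the concrete realization of the product, the result follows,'' which by itself only shows that $a\otimes b\mapsto(1\otimes\mu^{\operatorname{op}}_b)\Delta(L_a)$ is a homomorphism of the dense ${}^*$-algebra $\bigl(C_c(G\times G),\times_{\mathcal R}\bigr)$ into ${\mathcal B}({\mathcal H}\otimes{\mathcal H})$; to conclude that the norm closure of the image is $B$ itself rather than a proper quotient of the enveloping $C^*$-algebra, one needs faithfulness of the integrated representation. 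You supply exactly this by conjugating with $W$, and your computation $W\pi(F)W^{-1}\xi(s,t)=\int F(z,t)\xi(s,z^{-1}t)\,dz=\bigl[(1\otimes\pi_0(F))\xi\bigr](s,t)$ checks out, reducing $\pi$ to an amplification of the standard Schr\"odinger representation whose faithfulness is the content of Proposition~\ref{k(h)}. (A shortcut is also available: since $C_0(G)\rtimes_{\tau}G\cong{\mathcal K}\bigl(L^2(G)\bigr)$ is simple, any nonzero ${}^*$-representation is automatically faithful; but your explicit unitary equivalence is more informative and closer in spirit to what one would need in the non-regular situations discussed at the end of Section~6, where simplicity is unavailable.) The covariance check $U(z)M(b)U(z)^{-1}=M(\tau_z b)$ and the $L^1$-density of elementary tensors in $C_c(G\times G)$ are routine and correctly flagged as such.
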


\begin{rem}
Here, $\hat{A}^{\operatorname{op}}$ is the $C^*$-algebra corresponding to $\hat{N}'$, 
equipped with the opposite multiplication, being denoted by $\mu^{\operatorname{op}}$. 
In our case, working with $\mu^{\operatorname{op}}$ is just nominal, since the product 
on $\hat{N}=L^{\infty}(G)$ is already known to be commutative.  We nevertheless chose 
to use $\mu^{\operatorname{op}}$, anticipating a possible future generalization. 
Indeed, the description above was obtained from some heuristic computations exploiting 
the close relationship between the multiplicative unitary operator $\hat{W}$ and the 
operator ${\mathcal R}=Z^*_{34}\hat{W}_{14}Z_{34}$.
\end{rem}

\begin{proof}
Let $a,b\in C_c(G)$ and let $\xi\in{\mathcal H}\otimes{\mathcal H}$.  Then by 
the results obtained in Section~5, we have:
$$
(1\otimes\mu^{\operatorname{op}}_b)\bigl(\Delta(L_a)\bigr)\xi(s,t)=\int b(t)
a(z)\xi(z^{-1}s,z^{-1}t)\,dz.
$$
Comparing this with the concrete realization we obtained in equation~\eqref{((defprod))} 
for the product on the $C^*$-algebra $B$ (see also Section~5), the result of the 
proposition follows.
\end{proof}

Unless the quantum group $(A,\Delta)$ is ``regular'' (in the sense of Baaj 
and Skandalis \cite{BS}, \cite{Wr7}), the $C^*$-algebra $(1\otimes\hat{A})
\Delta^{\operatorname{cop}}(A)$ is not necessarily isomorphic to ${\mathcal K}
({\mathcal H})$ and in general may be quite complicated (It may not even be 
``type I''. See \cite{VDoamp} and Section~9 of \cite{BjVa}.).  Meanwhile, 
even though we cannot provide a general proof here, several computations 
at the heuristic level (using different examples) seem to suggest that this is 
the correct description for the $C^*$-algebra $B$.  We hope to report on this 
matter in the near future.

\bigskip\bigskip

\providecommand{\bysame}{\leavevmode\hbox to3em{\hrulefill}\thinspace}
\providecommand{\MR}{\relax\ifhmode\unskip\space\fi MR }
\providecommand{\MRhref}[2]{%
  \href{http://www.ams.org/mathscinet-getitem?mr=#1}{#2}
}
\providecommand{\href}[2]{#2}

\end{document}